\pgfplotsset{compat=1.15}
\title[Minimizing curves for the nonlocal Willmore energy]{Convex minimizing curves of the scaling-invariant nonlocal Willmore energy}
\author[G. Giacomin]{Giovanni Giacomin}
\address[Giovanni Giacomin]
{Department of Mathematics and Statistics, University of Western Australia, 35 Stirling Highway, WA6009 Crawley,
Australia.} \email{giovanni.giacomin@research.uwa.edu.au}
\author[A. Schikorra]{Armin Schikorra}
\address[Armin Schikorra]{Department of Mathematics,
University of Pittsburgh,
301 Thackeray Hall,
Pittsburgh, PA 15260, USA}
\email{armin@pitt.edu}
\newcommand{\N}{{\mathbb N}}
\renewcommand{\S}{{\mathbb S}}
\newtheorem{theorem}{Theorem}
\newtheorem{lemma}[theorem]{Lemma}
\newtheorem{corollary}[theorem]{Corollary}
\newtheorem{proposition}[theorem]{Proposition}
\theoremstyle{definition}
\newtheorem{definition}[theorem]{Definition}
\theoremstyle{remark}
\newtheorem{remark}[theorem]{Remark}
\newcommand\diam{{\rm diam\,}}
\newcommand\dist{{\rm dist\,}}
\newcommand\lip{{\rm Lip\,}}
\newcommand\supp{{\rm supp\,}}
\newcommand{\R}{\mathbb{R}}
\newcommand{\brac}[1]{\left (#1 \right )}
\newcommand{\abs}[1]{\left\lvert #1 \right \rvert}
\newcommand{\barint}{
\rule[.036in]{.12in}{.009in}\kern-.16in \displaystyle\int }
\newcommand{\barcal}{\text{$ \rule[.036in]{.11in}{.007in}\kern-.128in\int $}}
\def\mvint_#1{\mathchoice
          {\mathop{\vrule width 6pt height 3 pt depth -2.5pt
                  \kern -8pt \intop}\nolimits_{\kern -3pt #1}}%
          {\mathop{\vrule width 5pt height 3 pt depth -2.6pt
                  \kern -6pt \intop}\nolimits_{#1}}%
          {\mathop{\vrule width 5pt height 3 pt depth -2.6pt
                  \kern -6pt \intop}\nolimits_{#1}}%
          {\mathop{\vrule width 5pt height 3 pt depth -2.6pt
                  \kern -6pt \intop}\nolimits_{#1}}}
\numberwithin{theorem}{section} \numberwithin{equation}{section}
\newcommand{\aleq}{\lesssim}
\newcommand{\ageq}{\succsim}
\newcommand{\aeq}{\approx}
\newcommand{\laps}[1]{|D|^{#1}}
\def\avint{\,\ThisStyle{\ensurestackMath{%
			\stackinset{c}{.2\LMpt}{c}{.5\LMpt}{\SavedStyle-}{\SavedStyle\phantom{\int}}}%
		\setbox0=\hbox{$\SavedStyle\int\,$}\kern-\wd0}\int}
\let\latexchi\chi
\renewcommand\chi{\@ifnextchar_\sub@chi\latexchi}
\newcommand{\sub@chi}[2]{
  \@ifnextchar^{\subsup@chi{#2}}{\latexchi^{}_{#2}}%
}
\newcommand{\subsup@chi}[3]{
  \latexchi_{#1}^{#3}%
}
\newcommand{\eps}{\varepsilon}
\begin{document}

\begin{abstract}
We consider the scaling-invariant nonlocal Willmore energy, defined via the nonlocal mean curvature by Caffarelli, Roquejoffre and Savin.
Our main result is the existence of minimizers in the class of convex $C^1$-curves.
\end{abstract}

\maketitle
\tableofcontents

\section{Introduction and main results}

Given a subset $E\subset\R^2$ the nonlocal mean curvature  at $x\in \partial E$ introduced by Caffarelli, Roquejoffre and Savin in~\cite{caffarelli2009nonlocal} is defined as
\begin{equation*}
H_{\partial E}^s(x):=\mbox{P.V.}\int_{\R^2}\frac{\chi_{E^c}(y)-\chi_{E}(y)}{\left|x-y\right|^{2+s}}\,dy \equiv \lim_{\eps \to 0} \int_{\R^2 \setminus B_\eps(x)}\frac{\chi_{E^c}(y)-\chi_{E}(y)}{\left|x-y\right|^{2+s}}\,dy,
\end{equation*}
whenever that limit exists, e.g. in the case of smooth $\partial E$. The fractional mean curvature is obtained as the first variation of the fractional perimeter, just like the classical mean curvature is the first variation of the usual perimeter (or area) functional.

Having a nonlocal mean curvature, we can naturally define a nonlocal Willmore energy:
For $s\in (0,1)$ and $p\geq 1$ we set
\begin{equation*}
\hat{\mathscr{W}}_{s,p}(\partial E):=\int_{\partial E}\left|H_{\partial E}^s(x)\right|^p\,d\sigma^1(x).
\end{equation*} 
This energy is well-defined on measurable sets $E$ for which $H_{\partial E}^s(x)$ exists for a.e. $x \in \partial E$ -- for all other measurable sets we set $\hat{\mathscr{W}}_{s,p}(\partial E) := +\infty$. Also, observe that it does not matter if we consider $E$ or $E^c$, the nonlocal Willmore energy stays the same.

If we parametrize a suitably nice set $\partial E$ by a curve $\gamma: \S^1 \to \R^2$, we see that 
\[
\begin{split}
H_{\partial E}^s(x) =& \pm c_s \int_{\S^1} \frac{\langle n(\gamma(y)), \gamma(y)-\gamma(x)\rangle}{\left|\gamma(y)-\gamma(x)\right|^{2+s}}\left|\gamma'(y)\right|\,dy\\
\equiv &\pm c_s \int_{\S^1} \frac{\langle n(\gamma(y)),\gamma(y)-\gamma(x)-\gamma'(y)(y-x)\rangle}{\left|\gamma(y)-\gamma(x)\right|^{2+s}}\left|\gamma'(y)\right|\,dy.
\end{split}
\]
Here, for $z \in \partial E$ the vector $n(z) \in \S^1$ denotes the unit normal, and the sign of $H_{\partial E}^s$ depends on whether it is the outwards facing or inwards facing normal (or, in other words, if we consider $E$ or $E^c$). Observe that if $\gamma$ is smooth, then the last integral in the equality above is absolutely convergent.

This leads to a nonlocal Willmore energy for curves  $\gamma: \S^1 \to \R^2$,
\begin{equation*}
\mathscr{W}_{s,p}(\gamma)=\int_{\S^1}\abs{\int_{\S^1} \frac{\langle n(\gamma(y)),\gamma(y)-\gamma(x)\rangle }{\left|\gamma(y)-\gamma(x)\right|^{2+s}}\left|\gamma'(y)\right|\,dy}^{p}\left|\gamma'(x)\right|\,dx,
\end{equation*}
and in the smooth setting we have
\begin{equation*}
\mathscr{W}_{s,p}(\gamma)=c_{s,p} \hat{\mathscr{W}}_{s,p}(E).
\end{equation*}
In collaboration with Blatt and Scheuer we discussed some aspects of the \emph{subcritical} nonlocal Willmore energy in \cite{blatt2023fractional}, i.e. when $s > \frac{1}{p}$. In this work we are interested in the critical case, which corresponds to $p = \frac{1}{s}$, when the nonlocal Willmore energy becomes scaling invariant. Scaling invariance means: If for some $\gamma: \S^1 \to \R^2$ and $\rho>0$ we consider the rescaled curve $\gamma_{\rho}(\cdot):=\rho\gamma(\frac{\cdot}{\rho}):\rho \S^1\to \R^2$, then we have 
\begin{equation*}
\mathscr{W}_{s,\frac{1}{s}}(\gamma)=\mathscr{W}_{s,\frac{1}{s}}(\gamma_\rho).
\end{equation*}  

For simplicity of notation we shall denote from now on
\begin{equation*}
\mathscr{W}_{s}(\gamma)=\int_{\S^1}\left(\int_{\S^1}\frac{\langle n(\gamma(y)),\gamma(y)-\gamma(x)\rangle }{\left|\gamma(y)-\gamma(x)\right|^{2+s}}\left|\gamma'(y)\right|\,dy\right)^{\frac{1}{s}}\left|\gamma'(x)\right|\,dx,
\end{equation*}

It is obvious that nonlocal minimal surfaces, i.e. surfaces with vanishing fractional mean curvature $H_{s} \equiv 0$, are absolute minimizers of the nonlocal Willmore energy -- but in general a curve or surface which is critical for the Willmore-energy is not necessarily a minimal surface (in the sense of vanishing mean curvature). For example there are no compact nonlocal minimal surfaces \cite{CFMN18,CFSW18}. While nonlocal minimal and constant mean curvature surfaces have been studied very thoroughly, for an overview see e.g. \cite{DV16,DV22}, the theory of nonlocal Willmore surfaces is only at the very beginning.
Similar to recent results on nonlocal minimal surfaces \cite{CSSV23,CFS23} one could hope that eventually one can transfer properties for $s$-Willmore surfaces to classical Willmore surfaces by letting $s \to 1$.

Regarding the existence of minimizers, scaling invariant variational functionals are prone to bubbling effects, and the existence of minimizers becomes a nontrivial question. The classical example is the Sacks-Uhlenbeck theory for harmonic maps in homotopy groups, \cite{SU81}. But also in the theory of classical Willmore surfaces \cite{S86,KS12,R16} existence of minimizers is a highly nontrivial question.

In this work we are interested in minimizing \emph{convex} curves, w.l.o.g. they are in arc length parametrization, more precisely we consider the class
\begin{equation}\label{def:setX}
X:=\left\lbrace \gamma\in C^1(\S^1,\R^2)\Bigg|\, \begin{aligned}
&\mbox{$\gamma$ is a homeomorphism}\\ 
&\left|\gamma'\right|\equiv 1 \mbox{  a.e. in  }\S^1\\
&\mbox{$\gamma$ is convex in $\S^1$}\\
\end{aligned} \right\rbrace.
\end{equation}

The main result of this paper is the following:
\begin{theorem}\label{MainTheorem-23}
For any $s \in (0,1)$ there exists a minimizer  of $\mathscr{W}_s$ in $X$.
\end{theorem}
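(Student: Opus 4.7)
The plan is to apply the direct method. Fix a minimizing sequence $\{\gamma_k\}\subset X$. Since $|\gamma_k'|\equiv 1$ a.e., the curves $\gamma_k$ have a common finite length, and after a translation we may assume that the enclosed convex body $E_k$ contains the origin. Then the $\gamma_k$ are uniformly $1$-Lipschitz and uniformly bounded. Arzel\`a--Ascoli yields a (not relabeled) subsequence converging uniformly to some $1$-Lipschitz $\gamma_\infty\colon \S^1\to\R^2$, and the convex bodies $E_k$ converge to a convex set $E_\infty\subset\R^2$ in the Hausdorff metric.

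The main difficulty is to rule out \emph{degeneracy} and to upgrade the regularity of $\gamma_\infty$ enough for it to lie in $X$. \emph{A priori} $E_\infty$ could collapse to a line segment, in which case $\gamma_\infty$ fails to be injective and $|\gamma_\infty'|\not\equiv 1$. Because $\mathscr{W}_s$ is scale-invariant while the length is already normalized, excluding this degeneration requires a quantitative coercivity bound of the form $\mathscr{W}_s(\gamma)\to\infty$ as $E$ flattens; heuristically, a flattening convex curve develops two sharp corners near which the nonlocal mean curvature $H^s$ diverges strongly enough to make $\int |H^s|^{1/s}\,d\sigma$ blow up. The same mechanism should rule out corners in the limit and promote $\gamma_\infty$ from Lipschitz to $C^1$. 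Lower semicontinuity of perimeter under Hausdorff convergence of convex bodies then forces the length of $\partial E_\infty$ to agree with that of $\partial E_k$, so $|\gamma_\infty'|\equiv 1$ a.e. This coercivity and regularity step I expect to be the main obstacle.

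Once $\gamma_\infty\in X$, the lower semicontinuity $\mathscr{W}_s(\gamma_\infty)\leq \liminf_{k\to\infty}\mathscr{W}_s(\gamma_k)$ follows by Fatou. The key point is that convexity makes the inner integrand
\begin{equation*}
\frac{\langle n(\gamma(y)),\gamma(y)-\gamma(x)\rangle}{|\gamma(y)-\gamma(x)|^{2+s}}|\gamma'(y)|
\end{equation*}
pointwise \emph{nonnegative} when $n$ is the outward unit normal, since every point of the convex body $E$ lies in the closed inner half-plane of every supporting line of $E$. No principal value cancellation is required and the whole double integral defining $\mathscr{W}_s$ is a genuine positive Lebesgue integral. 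Combining the uniform convergence $\gamma_k\to\gamma_\infty$ with the Hausdorff convergence $E_k\to E_\infty$ gives a.e.\ convergence of the outward normals, hence pointwise convergence of the inner integrand off the diagonal. Two successive applications of Fatou's lemma then yield the desired inequality, and $\gamma_\infty$ attains $\inf_X\mathscr{W}_s$.
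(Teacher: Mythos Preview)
Your overall architecture---direct method, Arzel\`a--Ascoli for uniform convergence, nonnegativity of the integrand from convexity so that Fatou applies---is the same as the paper's, and your identification of the degeneracy/$C^1$ step as the main obstacle is accurate. But two of your steps are genuine gaps, not just sketches.

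\textbf{The normal-convergence step is unjustified and is in fact the core difficulty.} You assert that ``uniform convergence $\gamma_k\to\gamma_\infty$ with Hausdorff convergence $E_k\to E_\infty$ gives a.e.\ convergence of the outward normals.'' But $n(\gamma_k(y))=\gamma_k'(y)^\perp$, and uniform convergence of $\gamma_k$ says nothing about convergence of $\gamma_k'$. You might hope to exploit convexity (pointwise convergence of convex functions to a differentiable limit forces convergence of derivatives), but that requires knowing the limit is $C^1$ \emph{first}, and your route to $C^1$ goes through finite energy, which goes through Fatou, which needs the derivative convergence---a circle. The paper breaks the circle quantitatively: from locally small energy it extracts uniform BMO control of $\gamma_k'$ (\Cref{pr:VMOcontrol}), hence local BiLipschitz and graph structure, hence uniform $W^{s,1/s}_2$ bounds on $\gamma_k'$ away from a finite exceptional set $\Sigma$ (\Cref{th:sobolevcontrol}). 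Rellich compactness then gives $\gamma_k'\to\gamma_\infty'$ a.e.\ on $\S^1\setminus\Sigma$, hence a.e.\ on $\S^1$. Only now does Fatou apply; and only \emph{after} Fatou yields $\mathscr{W}_s(\gamma_\infty)<\infty$ is \Cref{n77754fg} invoked to promote the Lipschitz limit to $C^1$. Note also that $|\gamma_\infty'|\equiv 1$ drops out immediately from a.e.\ convergence of the derivatives, without any perimeter argument.

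\textbf{Injectivity of the limit is not addressed.} Even once you exclude collapse to a segment, the uniform limit of homeomorphisms need not be injective. The paper handles this via a self-repulsiveness estimate (\Cref{th:smallenergyinjectivity}, \Cref{topolmfd}): locally small energy around $x_0$ forces any point $z_0$ with $\gamma(z_0)$ close to $\gamma(x_0)$ to lie in the same parameter ball, so self-intersections can occur only at the finitely many points of $\Sigma$, and a topological-manifold argument rules those out too.

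Your heuristic that corners make $H^s$ blow up in $L^{1/s}$ is exactly right and is what \Cref{n77754fg} makes rigorous via barrier sets and the maximum principle for $H^s$; but it enters the logic after lower semicontinuity, not before.
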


In~\cite{blatt2023fractional} we obtained an analogous result for $C^1$-convex curves for the \emph{subcritical} case $p > \frac{1}{s}$. There, the arguments crucially rely on the non-scaling invariance of the subcritical regime. In our critical regime we need completely different techniques, more similar to the minimizing methods used for Willmore's energy for surfaces (which is also scaling invariant).

The nonlocal Willmore energy is only defined on co-dimension one objects, since the nonlocal perimeter is only defined on those sets (albeit, see \cite{MS23}). In particular, we can reformulate our result for sets $E$ whose boundary are represented by curves $\gamma$.
Let 
\begin{equation*}
C:=\left\lbrace E\subset\R^2\Bigg|\,\begin{aligned}
&\mbox{$E$ is convex and bounded}\\
&\mbox{$\partial E$ is $C^1$}\\
&\mbox{$E$ has nonempty interior}
\end{aligned} \right\rbrace.
\end{equation*}
Then we have
\begin{theorem}
For any $s \in (0,1)$ there exists a minimizer  of $\hat{\mathscr{W}}_{s,\frac{1}{s}}$ in $C$.
\end{theorem}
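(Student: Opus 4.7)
The plan is to deduce this statement directly from Theorem~\ref{MainTheorem-23} via the natural correspondence between convex $C^1$ sets and convex $C^1$-curves, combined with the scaling invariance of the energy.

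First I would set up a bijection (up to rigid motions and dilations) between $X$ and $C$. Given $\gamma\in X$, the Jordan curve theorem produces a bounded open convex set $E_\gamma$ with $\partial E_\gamma=\gamma(\S^1)$; the $C^1$-homeomorphism property of $\gamma$ together with $\abs{\gamma'}\equiv 1$ ensures $\partial E_\gamma\in C^1$, so $E_\gamma\in C$. Conversely, for $E\in C$ with perimeter $L:=\mathcal{H}^1(\partial E)$, the rescaled set $\tilde E:=(2\pi/L)E$ has perimeter $2\pi$ and admits an arc-length parametrization $\gamma_{\tilde E}\in X$.

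Next I would record the energy identification $\hat{\mathscr{W}}_{s,1/s}(E_\gamma) = \kappa_s\, \mathscr{W}_s(\gamma)$ for $\gamma\in X$, where $\kappa_s>0$ depends only on $s$. In the smooth case this is exactly the relation $\mathscr{W}_{s,p}(\gamma)=c_{s,p} \hat{\mathscr{W}}_{s,p}(E)$ recalled in the introduction. For merely $C^1$-convex $\gamma$, convexity forces the integrand $\langle n(\gamma(y)),\gamma(y)-\gamma(x)\rangle/\abs{\gamma(y)-\gamma(x)}^{2+s}$ to be nonnegative, so the inner integral defining the nonlocal mean curvature is absolutely convergent, and the identification passes through essentially unchanged. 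Combined with the critical scaling invariance $\hat{\mathscr{W}}_{s,1/s}(E)=\hat{\mathscr{W}}_{s,1/s}(\tilde E)$, this yields
\begin{equation*}
\hat{\mathscr{W}}_{s,1/s}(E) = \kappa_s\, \mathscr{W}_s(\gamma_{\tilde E}) \qquad \text{for every } E\in C.
\end{equation*}

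Finally, let $\gamma^*\in X$ be a minimizer of $\mathscr{W}_s$ provided by Theorem~\ref{MainTheorem-23}, and set $E^*:=E_{\gamma^*}\in C$. For any $E\in C$ with associated $\gamma_{\tilde E}\in X$,
\begin{equation*}
\hat{\mathscr{W}}_{s,1/s}(E^*) = \kappa_s\,\mathscr{W}_s(\gamma^*) \leq \kappa_s\,\mathscr{W}_s(\gamma_{\tilde E}) = \hat{\mathscr{W}}_{s,1/s}(E),
\end{equation*}
so $E^*$ minimizes $\hat{\mathscr{W}}_{s,1/s}$ on $C$. The only subtle point is the extension of the energy identification from smooth to $C^1$-convex curves, which is resolved by the sign observation above; if needed, a density argument via mollification of the tangent angle (preserving convexity and unit speed in the limit) handles the remaining integrability at the boundary of the class.
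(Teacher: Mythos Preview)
Your proposal is correct and matches the paper's approach: the paper states this theorem as an immediate reformulation of Theorem~\ref{MainTheorem-23} without a separate proof, and you have simply spelled out the translation between $X$ and $C$ together with the scaling invariance. One small refinement: the nonnegativity of the integrand does not by itself give absolute convergence of the inner integral; the precise justification for the energy identification at the $C^1$-convex level is Proposition~\ref{salame}, which shows $H^s_{\partial E}(\gamma(x)) = \tfrac{2}{s}\int_{\S^1}\langle n(\gamma(y)),\gamma(y)-\gamma(x)\rangle/|\gamma(y)-\gamma(x)|^{2+s}\,dy$ a.e.\ for arclength-parametrized Lipschitz convex homeomorphisms.
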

%

\subsection*{Strategy and Outline}
Our arguments are substantially inspired by the relation between nonlocal Willmore energy and the tangent-point energies, studied first by Buck and Orloff~\cite{buck-orloff}, Gonzalez and Maddocks~\cite{GM99}, and Blatt and Reiter \cite{BR15}. Tangent-point energies are just one class of topological knot or surface energies, and thus related to the M\"obius energy \cite{OH91,FHW94}, the O'Hara energies \cite{OH92,OH94,BRS19}, and Menger curvature energies  \cite{Menger09,Meurer18,Goering21,GG20}.
Indeed, our arguments draw substantial inspiration from the corresponding existence argument for minimizers of the tangent-point energy,  \cite{BRSV21}. There are, however, substantial technical differences that we need to overcome. One is the sign of the mean curvature $H_{s}$, which for general curves could be changing -- we remedy this by working only with convex curves, an unsatisfying but for now seemingly unavoidable assumption (cf. also \cite{CCC20}). Another one is the change of underlying Sobolev space, which creates surprising technical difficulties. The basic idea is as follows:
\begin{itemize}
\item We first show that the Willmore energy controls the BMO-norm of $\gamma'$ in a locally quantifiable way, \Cref{pr:VMOcontrol}.
\item Then we establish that local BMO-control suffices to obtain (quantifiably) local control of the BiLipschitz constant of the curve, \Cref{la:bilipvmo}. This in turn gives quantitative control of the graphical behavior of $\gamma$, \Cref{blatt:bilip}.
\item On sets where we are graphs, we can use convexity to obtain a control in the sharp Sobolev space $W^{1+s,\frac{1}{s}}_2$ \Cref{th:sobolevcontrol}.
\item Having Sobolev control allows us to pass to the limit for sequences. We then show that \emph{qualitatively} any convex curve with finite Willmore energy is actually $C^1$ (a fact, probably interesting on its own), \Cref{n77754fg}.
\item Those are the main ingredients that are needed to obtain reflexivity, lower semicontinuity in the class of convex homeomorphisms, which we carry out in \Cref{th:convergence}
\end{itemize}

{\bf Acknowledgement}
Helpful discussions with S. Blatt, S. Dipierro, J. Scheuer, and E. Valdinocci are gratefully acknowledged. The proof of \Cref{blatt:bilip} is based on an idea by S. Blatt.

A.S. is funded by NSF Career DMS-2044898. A.S. is an Alexander-von-Humboldt Fellow.
G.G. is funded by the Australian Laureate Fellowship FL190100081 “Minimal surfaces, free boundaries and partial differential equations”.
\section{Preliminaries and Notation}

For a vector $v =(v_1,v_2)^T \in \R^2$ we denote its rotation by $-\frac{\pi}{2}$
\[
v^\perp = (v_2,-v_1)^T.
\]
We also use the notation 
\[
R_{\theta} v
\]
for counterclockwise rotation by $\theta$.
For two nonnegative quantities $A,B$ we write $A \aleq B$ if there exists a multiplicative constant $C$ such that $A \leq C\, B$, and write $A \aeq B$ if $A \aleq B$ and $B \aleq A$. Special dependencies in such inequalities are denoted by subscript, but essentially all constants depend on $s$.

\subsection{Sobolev spaces}
Our argument relies on a suitable Sobolev control. It turns out, that the ``usual'' Gagliardo-Slobokdeckij space seems not to be the right space to work with -- instead we work with the Bessel potential space. However, for localization purposes it is beneficial to keep a seminorm similar to the Gagliardo seminorm. 
\begin{definition}
For $I \subset \R$, the Sobolev space $W^{s,p}_2(I)$ is defined in the usual way via its semi-norm,
\[
 [f]_{W^{s,p}_2(I)} := \brac{\int_{I} \brac{\int_{I} \frac{|f(x)-f(y)|^2}{|x-y|^{1+2s}} dy}^{\frac{p}{2}}\, dx}^{\frac{1}{p}}.
\]
\end{definition}

The main reason we are interested in using this space is the following identification, which is due to Stein \cite[Theorem 1]{Stein61}, for the general versions we refer to \cite{PratsSaksman17,LifengWang23}. To state the result, we adopt the notation
\begin{equation*}
\left|D\right|^sf:=(-\Delta)^\frac{s}{2}f.
\end{equation*}
Here, $(-\Delta)^{\frac{s}{2}}$ denotes the usual fractional Laplacian, cf. \cite{Hitchhiker}.

\begin{theorem}\label{th:steinident}
Assume that $s \in (0,1)$ and $p \in (\frac{2}{1+2s},\infty)$, then 
\[
 [f]_{W^{s,p}_2(\R)} \aeq \|\laps{s} f\|_{L^p(\R)}.
\]
In particular, for any $s \in (0,1)$
\[
 [f]_{W^{s,\frac{1}{s}}_2(\R)} \aeq \|\laps{s} f\|_{L^{\frac{1}{s}}(\R^n)}.
\]
\end{theorem}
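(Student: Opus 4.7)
The plan is to reduce this equivalence to the classical Littlewood--Paley characterization of the Bessel potential space $H^{s,p}(\R)$, handling the $L^2$ case directly via Plancherel and then bootstrapping to general $p$.

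First I would establish the case $p=2$ by a direct Fourier-side calculation: substituting $y=x+h$ and using Plancherel in $x$ gives
\begin{equation*}
[f]_{W^{s,2}_2(\R)}^2 = \int_\R \left(\int_\R \frac{|e^{ih\xi}-1|^2}{|h|^{1+2s}}\,dh\right) |\hat f(\xi)|^2\,d\xi = c_s\int_\R |\xi|^{2s}|\hat f(\xi)|^2\,d\xi,
\end{equation*}
which equals $c_s\|\laps{s}f\|_{L^2}^2$. For general $p$, I would pass to a Littlewood--Paley decomposition $f=\sum_{j\in\Z}\Delta_j f$, where $\Delta_j$ frequency-localizes to $|\xi|\sim 2^j$. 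The Triebel--Lizorkin characterization of $H^{s,p}$ yields, for $1<p<\infty$,
\begin{equation*}
\|\laps{s}f\|_{L^p(\R)} \aeq \left\|\left(\sum_{j\in\Z} 2^{2sj}|\Delta_j f|^2\right)^{1/2}\right\|_{L^p(\R)}.
\end{equation*}
It then suffices to show that this Littlewood--Paley square function is $L^p$-equivalent to the Gagliardo-type square function $G_s(f)(x):=(\int_\R |f(x)-f(y)|^2/|x-y|^{1+2s}\,dy)^{1/2}$. For the upper bound $\|G_s(f)\|_{L^p}\aleq\|\laps{s}f\|_{L^p}$, I would dyadically decompose the $y$-integral into shells $|x-y|\sim 2^{-j}$, telescope $f(x)-f(y)=\sum_k(\Delta_k f(x)-\Delta_k f(y))$, control high-frequency terms $k\geq j$ by the Hardy--Littlewood maximal function $M(\Delta_k f)(x)$, and low-frequency terms $k<j$ by a gradient bound of the form $2^{k-j}M(\Delta_k f)(x)$; summing via Cauchy--Schwarz and applying the Fefferman--Stein vector-valued maximal inequality closes the argument.

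The main obstacle is the reverse inequality in the low-integrability range $\frac{2}{1+2s}<p<2$, where Minkowski's inequality in the $y$-variable is unavailable and the dual exponent exceeds $2$. Here one must substitute Peetre-type maximal functions adapted to the frequency-localized pieces, combined with a sharp-function/duality argument that converts Gagliardo-type increments into controls on the $\Delta_j f$. The precise threshold $p=\frac{2}{1+2s}$ emerges as the borderline at which the weighted Fefferman--Stein inequality applied to the sequence $2^{sj}\Delta_j f$ fails. For the applications in this paper the exponent $p=1/s$ satisfies $1/s>\frac{2}{1+2s}$ for every $s\in(0,1)$, so only the easier supercritical range is actually needed downstream.
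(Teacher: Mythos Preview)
The paper does not supply its own proof of this theorem: it is stated as a known identification, attributed to Stein \cite[Theorem~1]{Stein61} with the general range of exponents referenced to \cite{PratsSaksman17,LifengWang23}. There is therefore no in-paper argument to compare against.

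Your outline is the standard route taken in those references: the $p=2$ case via Plancherel is immediate, and for general $p$ one passes through the Triebel--Lizorkin scale $F^{s}_{p,2}$, identifying both $\|\laps{s}f\|_{L^p}$ and the square function $G_s(f)(x)=\bigl(\int_\R|f(x)-f(y)|^2|x-y|^{-1-2s}\,dy\bigr)^{1/2}$ with the Littlewood--Paley square function via Fefferman--Stein vector-valued maximal inequalities. You are right that the bound $\|\laps{s}f\|_{L^p}\aleq [f]_{W^{s,p}_2}$ in the range $\tfrac{2}{1+2s}<p<2$ is the delicate direction and requires Peetre-type maximal functions; this is precisely where the threshold $p=\tfrac{2}{1+2s}$ appears, and it is handled in the cited works. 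As you note, the exponent $p=\tfrac{1}{s}$ used throughout the paper always lies in the range $p>\tfrac{2}{1+2s}$, so the full strength of the theorem is not strictly needed downstream. Your sketch is a faithful summary of the literature proof, though it remains an outline rather than a self-contained argument.
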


We are interested in local estimates, and for this will need a Sobolev-type inequality on a ball. Towards this, we first observe the following Poincar\'e type inequality
\begin{lemma}\label{la:poinc}
For $0 <t <1$ we have for the interval $B =(-1,1)$ and any $f \in L^2(B)$
\[
\brac{\int_{B} \int_{B} |f(x)-f(y)|^{\frac{1}{t}} \, dx\, dy}^{t}+ 
  \brac{\int_{B} \int_{B} |f(x)-f(y)|^2 \, dx\, dy}^{\frac{1}{2}} \aleq [f]_{W^{t,\frac{1}{t}}_2(B)}
 \]
 In particular, if $\int_{B} f = 0$ we have 
 \[
  \|f\|_{L^{\frac{1}{t}}(B)} + \|f\|_{L^{2}(B)} \aleq  [f]_{W^{t,\frac{1}{t}}_2(B)}.
 \]
\end{lemma}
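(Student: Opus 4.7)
My plan is to reduce all three claims in the lemma to a single critical Poincar\'e-Sobolev inequality, and then prove that inequality by a case split on the value of $p := 1/t$. Let $\bar{f}_B := |B|^{-1}\int_B f$. By the triangle inequality $|f(x)-f(y)|^q \le 2^{q-1}(|f(x)-\bar{f}_B|^q + |f(y)-\bar{f}_B|^q)$ together with Jensen's inequality in the reverse direction, $\int_B\int_B|f(x)-f(y)|^q\,dx\,dy \aeq |B|\|f-\bar{f}_B\|_{L^q(B)}^q$. Hence both double-integral summands, as well as the ``In particular'' statement (which corresponds to $\bar{f}_B = 0$), will follow from the single Poincar\'e-Sobolev statement
\begin{equation*}
\|f-\bar{f}_B\|_{L^q(B)} \aleq [f]_{W^{t,1/t}_2(B)}, \qquad q \in \{2, 1/t\}.
\end{equation*}

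Write $G(x) := \int_B |f(x)-f(y)|^2/|x-y|^{1+2t}\,dy$, so that $[f]_{W^{t,p}_2(B)}^p = \int_B G(x)^{p/2}\,dx$. In the case $p \le 2$ (i.e.\ $t \ge 1/2$) I exploit the pointwise identity
\begin{equation*}
|f(x)-f(y)|^p = \left(\frac{|f(x)-f(y)|^2}{|x-y|^{1+2t}}\right)^{\!p/2}|x-y|^{(1+2t)p/2}
\end{equation*}
and apply H\"older's inequality in $y$ with conjugate exponents $2/p$ and $2/(2-p)$, both $\ge 1$. This gives $\int_B|f(x)-f(y)|^p\,dy \aleq G(x)^{p/2}$, and integration in $x$ yields the Poincar\'e-Sobolev bound for $q = p$. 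In the case $p \ge 2$ (i.e.\ $t \le 1/2$) I instead use the standard Triebel-Lizorkin embedding $W^{t,p}_2(\mathbb R) \hookrightarrow W^{t,p}(\mathbb R)$ (which is a manifestation of $\ell^2 \hookrightarrow \ell^p$ for $p \ge 2$ and is consistent with \Cref{th:steinident}), combined with the classical fractional Sobolev-Poincar\'e inequality at the critical exponent $tp = 1$, to cover all $q < \infty$. In each case, the non-dominant bound (for instance $q = 2$ when $p \ge 2$) follows at once from H\"older's inequality on the bounded domain $B$.

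The main obstacle will be the Poincar\'e-Sobolev with $q = 2$ in the regime $p < 2$. A direct attempt to bound $\int_B G\,dx$ by $(\int_B G^{p/2}\,dx)^{2/p}$ fails here, since Jensen applied to the concave map $\tau \mapsto \tau^{p/2}$ points the wrong way (and one can construct explicit non-negative $G$ for which the desired inequality is false). I would circumvent this via the Sobolev embedding $H^{t,1/t}(\mathbb R) \hookrightarrow H^{1/2,2}(\mathbb R)$ between Bessel potential spaces (both have Sobolev index zero, and $1/t \le 2$), combined with \Cref{th:steinident}, to obtain $[f]_{W^{1/2,2}(B)} \aleq [f]_{W^{t,1/t}_2(B)}$; dropping the weight $|x-y|^{-2} \ge 1/4$ in the $W^{1/2,2}$-Gagliardo seminorm then yields the $L^2$-double-integral bound directly. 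The remaining technical care concerns extending $f - \bar{f}_B$ from $B$ to $\mathbb R$ in order to apply the embedding on the whole line; I would handle this via a cutoff extension whose low-order error term is controlled by the already-proven $L^{1/t}$ bound from the $p \le 2$ H\"older step, thereby avoiding circularity.
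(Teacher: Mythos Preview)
Your treatment of the hard case $q=2$, $p=1/t<2$ is correct and genuinely different from the paper's. Where the paper argues by contradiction (normalize a violating sequence, extend by reflection and cutoff, invoke Rellich--Kondrachov to pass to a strong $L^2$ limit, then use Fatou on the seminorm to force the limit to be constant), you give a direct quantitative argument via the homogeneous Sobolev embedding $\dot H^{t,1/t}(\mathbb R)\hookrightarrow \dot H^{1/2,2}(\mathbb R)$, feeding the extension error back into your already-proven $L^{1/t}$ Poincar\'e bound. This is cleaner and avoids the soft compactness step; in fact your extension is essentially the same reflection-plus-cutoff construction the paper uses inside its contradiction argument, so the technical work is comparable.

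There is, however, a gap in your handling of $p>2$. The Triebel--Lizorkin embedding $W^{t,p}_2(\mathbb R)\hookrightarrow W^{t,p}(\mathbb R)$ is stated on the line, and to pass to $B$ you must extend; but the cutoff error term in $[\,\eta\tilde f\,]_{W^{t,p}_2(\mathbb R)}$ is $\|f-\bar f_B\|_{L^{p}(B)}$, which for $p>2$ is exactly the critical quantity you are trying to bound --- your Hölder step only delivers $L^{1/t}=L^p$ control when $p\le 2$, and the easy $L^2$ bound does not upgrade to $L^p$. The paper sidesteps this entirely: its $L^{1/t}$ bound is proved for \emph{all} $t\in(0,1)$ by the elementary chain
\[
|f(x)-(f)_B|\le \mvint_B|f(x)-f(z)|\,dz \le C\Bigl(\int_B|f(x)-f(z)|^2\,dz\Bigr)^{1/2}\le C'\,G(x)^{1/2},
\]
the last step using only $|x-z|\le 2$; then for $p\ge 2$ the $L^2$ bound follows from the $L^{1/t}$ bound by H\"older on $B$. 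Replacing your Triebel--Lizorkin route for $p>2$ with this two-line observation closes the gap and makes your overall argument work.
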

\begin{proof}
The inequality
\begin{equation}\label{eq:la:poincL1tclaim}
\brac{\int_{B} \int_{B} |f(x)-f(y)|^{\frac{1}{t}} \, dx\, dy}^{t} \aleq [f]_{W^{t,\frac{1}{t}}_2(B)}
\end{equation}
is a consequence of H\"older's inequality. Indeed, denote by $(f)_{B} := |B|^{-1} \int f$, then, since $t \in (0,1)$,
\[
\begin{split}
\brac{\int_{B} \int_{B} |f(x)-f(y)|^{\frac{1}{t}} \, dx\, dy}^{t} \leq& 2\brac{\int_{B} \int_{B} |f(x)-(f)_B|^{\frac{1}{t}} \, dx\, dy}^{t}\\ 
=& C\brac{ \int_{B} |f(x)-(f)_B|^{\frac{1}{t}} \, dx}^{t}\\
\aleq& \brac{ \int_{B} \brac{\int_{B}|f(x)-f(z)|\, dz}^{\frac{1}{t}} \, dx}^{t}\\
\aleq& \brac{ \int_{B} \brac{\int_{B}|f(x)-f(z)|^2\, dz}^{\frac{1}{2t}} \, dx}^{t}\\ 
\aleq& [f]_{W^{t,\frac{1}{t}}_2(B)}.
\end{split}
\]
This proves \eqref{eq:la:poincL1tclaim}.

The inequality
\begin{equation}\label{eq:la:poincL2claim}
 \brac{\int_{B} \int_{B} |f(x)-f(y)|^2 \, dx\, dy}^{\frac{1}{2}} \aleq [f]_{W^{t,\frac{1}{t}}_2(B)}
\end{equation}
follows from \eqref{eq:la:poincL1tclaim} and H\"older's inequality if $\frac{1}{t} \geq 2$. So from now on we will focus on proving \eqref{eq:la:poincL2claim} under the condition

\begin{equation}\label{eq:la:poinc:tsmall}
 \frac{1}{t} \leq 2.
\end{equation}
We argue by contradiction. Assume \eqref{eq:la:poincL2claim} is not true for any constant in $\aleq$. Then there exists a sequence of counterexamples $f_k \in L^2(B)$ such that 
\[
  \brac{\int_{B} \int_{B} |f_k(x)-f_k(y)|^2\, dx\, dy}^{\frac{1}{2}} > k\, [f_k]_{W^{t,\frac{1}{t}}_2(B)}
 \]
Both sides do not change if we replace $f_k$ by $f_k - (f_k)_{B}$, where $(f_k)_{B} := \frac{1}{2} \int_{B} f_k$, so w.l.o.g. we may assume 
\begin{equation}\label{eq:la:poinc:mv}
 (f_k)_{B} = 0 \quad \mbox{for all}\quad k.
\end{equation}
Dividing both sides by $\brac{\int_{B} \int_{B} |f_k(x)-f_k(y)|^2}^{\frac{1}{2}}$ we may also assume that for all $k \in \N$
\begin{equation*}
   \brac{\int_{B} \int_{B} |f_k(x)-f_k(y)|^2 dx\, dy}^{\frac{1}{2}} = 1\\
   \end{equation*}
   and
   \begin{equation}\label{eq:la:poinc:Wt1tleq1k}
  [f_k]_{W^{t,\frac{1}{t}}_2(B)} \leq \frac{1}{k}.
\end{equation}
The first equality, combined with the assumption $(f)_{B}=0$ implies in particular 
\begin{equation}\label{eq:la:poinc:L2bdd}
\frac{1}{2}\leq \|f_k\|_{L^2(B)} \leq 1.
\end{equation}
Fix $\eta \in C_c^\infty((-5/4,5/4))$, $\eta \equiv 1$ in $(-1,1)$, and set
\[
 g_k := \eta \tilde{f}_k,
\]
where
\[
 \tilde{f}_k(x) := \begin{cases}
                  f_k(-2-x) \quad &x \in (-2,-1)\\
                  f_k(x) \quad &x \in (-1,1)\\
                  f_k(2-x) \quad &x \in (1,2).\\
                 \end{cases}
\]
We see that 
\begin{equation}\label{eq:la:poinc:unifbddL2}
 \sup_{k}\|g_k\|_{L^2(\R)} \aleq \sup_{k}\|\tilde{f}_k\|_{L^2((-2,2))} \aleq \sup_{k} \|f_k\|_{L^2((-1,1))} \leq 1.
\end{equation}
Moroever,
\[
\begin{split}
  &\int_{\R} \brac{\int_{\R} \frac{|g_k(x)-g_k(y)|^2}{|x-y|^{1+2t}} dy}^{\frac{1}{2t}}\, dx\\
\aleq&  \int_{(-3/2,3/2)} \brac{\int_{(-3/2,3/2)} \frac{|g_k(x)-g_k(y)|^2}{|x-y|^{1+2t}} dy}^{\frac{1}{2t}}\, dx\\
&+\int_{\R \setminus (-3/2,3/2)} \brac{\int_{(-5/4,5/4)} \frac{|g_k(y)|^2}{(1+|x|)^{1+2t}} dy}^{\frac{1}{2t}}\, dx\\
&+\int_{(-5/4,5/4)} \brac{\int_{\R \setminus (-3/2,3/2)} \frac{|g_k(x)|^2}{(1+|y|)^{1+2t}} dy}^{\frac{1}{2t}}\, dx\\
\aeq&  \int_{(-3/2,3/2)} \brac{\int_{(-3/2,3/2)} \frac{|g_k(x)-g_k(y)|^2}{|x-y|^{1+2t}} dy}^{\frac{1}{2t}}\, dx\\
&+ \brac{\int_{(-5/4,5/4)} |g_k(y)|^2 dy}^{\frac{1}{2t}}\\
&+\int_{(-5/4,5/4)} |g_k(x)|^{\frac{1}{t}}  dx\\
\aleq&\int_{(-3/2,3/2)} \brac{\int_{(-3/2,3/2)} \frac{|g_k(x)-g_k(y)|^2}{|x-y|^{1+2t}} dy}^{\frac{1}{2t}}\, dx + \|g_k\|_{L^2(\R)}^{\frac{1}{t}}.
  \end{split}
\]
In the last inequality we used \eqref{eq:la:poinc:tsmall}.

Next we use the discrete product rule
\[
  \eta(x) \tilde{f}_k(x)-\eta(y) \tilde{f}_k(y) = \eta(y) \brac{\tilde{f}_k(x)-\tilde{f}_k(y)} + (\eta(x) -\eta(y)) \tilde{f}_k(x)
\]
to obtain
\[
\begin{split}
 &\int_{(-3/2,3/2)} \brac{\int_{(-3/2,3/2)} \frac{|g_k(x)-g_k(y)|^2}{|x-y|^{1+2t}} dy}^{\frac{1}{2t}}\, dx \\
 \aleq&\int_{(-3/2,3/2)} \brac{\int_{(-3/2,3/2)} \frac{|\tilde{f}_k(x)-\tilde{f}_k(y)|^2}{|x-y|^{1+2t}} dy}^{\frac{1}{2t}}\, dx \\
 &+\int_{(-3/2,3/2)} |\tilde{f}_k(x)|^{\frac{1}{t}} \brac{\int_{(-3/2,3/2)} \frac{|x-y|^2}{|x-y|^{1+2t}} dy}^{\frac{1}{2t}}\, dx \\
 \aleq& \int_{(-3/2,3/2)} \brac{\int_{(-3/2,3/2)} \frac{|\tilde{f}_k(x)-\tilde{f}_k(y)|^2}{|x-y|^{1+2t}} dy}^{\frac{1}{2t}}\, dx + \|\tilde{f}_k\|_{L^2((-3/2,3/2))}\\
 \end{split}
\]
where in the last inequality we used $t < 1$ and \eqref{eq:la:poinc:tsmall}.
Next, by substitution and our choice of reflection we find 
\[
\begin{split}
 &\int_{(-3/2,3/2)} \brac{\int_{(-3/2,3/2)} \frac{|\tilde{f}_k(x)-\tilde{f}_k(y)|^2}{|x-y|^{1+2t}} dy}^{\frac{1}{2t}}\, dx\\
 \aleq&\int_{(-1,1)} \brac{\int_{(-1,1)} \frac{|f_k(x)-f_k(y)|^2}{|x-y|^{1+2t}} dy}^{\frac{1}{2t}}\, dx.\\
 \end{split}
\]
In summary, we have shown 
\[
 \sup_{k} \|g_k\|_{L^2(\R)} + [g_k]_{W^{t,\frac{1}{t}}_2(\R)} < +\infty.
\]
By \Cref{th:steinident} and Rellich-Kondrachov theorem we find a subsequence (not relabelled) such that $g_k \xrightarrow{k \to \infty} g$ strongly in $L^2_{loc}(\R)$. Since $f_k = g_k$ in $(-1,1)$ we conclude that for $f:= g\Big |_{(-1,1)}$ we have in view of \eqref{eq:la:poinc:L2bdd}
\[
 \|f\|_{L^2((-1,1))} = \lim_{k \to \infty} \|g_k\|_{L^2((-1,1))}=\lim_{k \to \infty} \|f_k\|_{L^2((-1,1))}\geq \frac{1}{2}.
\]
Moreover, by strong $L^2$-convergence and \eqref{eq:la:poinc:mv}
\[
 (f)_{B} =0.
\]
On the other hand, by Fatou's lemma and \eqref{eq:la:poinc:Wt1tleq1k} we have 
\[
 [f]_{W^{t,\frac{1}{t}}_2((-1,1))} \leq \liminf_{k \to \infty} [f_k]_{W^{t,\frac{1}{t}}_2((-1,1))}  = 0,
\]
thus $f \equiv const$ -- since $(f)_{B} = 0$ we conclude $f \equiv 0$ contradicting $\|f\|_{L^2(-1,1)}\geq \frac{1}{2}$. This proves \eqref{eq:la:poincL2claim} under the assumption \eqref{eq:la:poinc:tsmall}. We can conclude.
\end{proof}

\begin{lemma}[Sobolev inequality]\label{la:sob}
For $0 < s <t <1$ we have for any interval $B \subset \R$
\[
 [f]_{W^{s,\frac{1}{s}}_2(B)} \aleq_{s,t} [f]_{W^{t,\frac{1}{t}}_2(B)}
 \]
\end{lemma}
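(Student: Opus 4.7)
The plan is to localize the critical-scaling fractional Sobolev embedding on the line via extension and Stein's identification. First I would reduce to $B=(-1,1)$ and to functions with vanishing mean $(f)_B=0$: both seminorms are translation- and dilation-invariant (a direct change of variables shows that the two $W^{\sigma,1/\sigma}_2$-seminorms are scale invariant under $x\mapsto \lambda x$ with $B$ rescaled accordingly, because the exponents $\sigma$ and $1/\sigma$ stand in critical relation), and both annihilate additive constants. The mean-zero reduction allows invoking \Cref{la:poinc} to obtain $\|f\|_{L^{2}(B)}+\|f\|_{L^{1/t}(B)}\aleq [f]_{W^{t,1/t}_2(B)}$.

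Second, I would extend $f$ to a compactly supported $g\colon\R\to\R$ using the reflection-and-cutoff recipe from the proof of \Cref{la:poinc}: reflect $f$ across $\pm 1$ to form $\tilde f$ on $(-2,2)$, then multiply by a cutoff $\eta\in C_c^\infty((-5/4,5/4))$ with $\eta\equiv 1$ on $B$. Repeating the three-piece estimate of that proof (interior piece on $(-3/2,3/2)^2$, tail piece where $|x-y|$ is bounded away from zero, and discrete product-rule piece) together with subadditivity of $x\mapsto x^t$ yields
\[
[g]_{W^{t,1/t}_2(\R)}\aleq [f]_{W^{t,1/t}_2(B)}+\|f\|_{L^{1/t}(B)}+\|f\|_{L^2(B)}\aleq [f]_{W^{t,1/t}_2(B)}.
\]
Note that the proof of \Cref{la:poinc} treated the analogous bound only under the auxiliary condition $1/t\le 2$, but that restriction was used solely to absorb an $L^{1/t}$-norm via the $L^2\hookrightarrow L^{1/t}$ embedding on a bounded set; here the Poincar\'e bound absorbs that term directly, so the extension estimate works for every $t\in(0,1)$.

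The core step is then the global embedding on $\R$. By \Cref{th:steinident} one has $[g]_{W^{\sigma,1/\sigma}_2(\R)}\aeq \||D|^\sigma g\|_{L^{1/\sigma}(\R)}$ for $\sigma\in\{s,t\}$, so it suffices to prove the critical-scale embedding $\||D|^s g\|_{L^{1/s}(\R)}\aleq \||D|^t g\|_{L^{1/t}(\R)}$. This follows from the identity $|D|^s g=I_{t-s}\,|D|^t g$, where $I_\alpha$ denotes the Riesz potential of order $\alpha$, together with the one-dimensional Hardy-Littlewood-Sobolev inequality applied with $p=1/t$ and $\alpha=t-s\in(0,1)$: the conjugate exponent satisfies $1/q=1/p-\alpha=s$, and the admissibility condition $1<p<q<\infty$ reduces to $s<t$ in $(0,1)$. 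Finally, since $g=f$ on $B$, monotonicity of the nonnegative integrands gives $[f]_{W^{s,1/s}_2(B)}\le [g]_{W^{s,1/s}_2(\R)}$, and chaining the inequalities concludes the proof.

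The main obstacle I anticipate is the careful bookkeeping for the extension bound, particularly the need to absorb the extra $L^{1/t}$ term in a $t$-uniform way across the regimes $t<1/2$ and $t\ge 1/2$; the global embedding on $\R$ itself is essentially automatic from Hardy-Littlewood-Sobolev once Stein's identification is in hand.
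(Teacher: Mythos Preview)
Your proposal is correct and follows essentially the same route as the paper: reduce to the unit interval with vanishing mean, extend by reflection and cutoff, invoke the global embedding on $\R$ via \Cref{th:steinident} and Sobolev (your explicit HLS step is what the paper calls ``the Sobolev embedding theorem''), and restrict back. Your observation that the auxiliary restriction $1/t\le 2$ from the proof of \Cref{la:poinc} is not needed once both the $L^2$- and $L^{1/t}$-terms are retained and absorbed via the Poincar\'e inequality is exactly the point the paper uses implicitly when it quotes that extension estimate in the form $[\eta\tilde f]_{W^{t,1/t}_2(\R)}\aleq [f]_{W^{t,1/t}_2(B)}+\|f\|_{L^{1/t}(B)}+\|f\|_{L^2(B)}$.
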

\begin{proof}
From \Cref{th:steinident} (observe that $\frac{1}{s} > \frac{2}{1+2s}$ is always satisfied) and the Sobolev embedding theorem
\begin{equation}\label{sobineq.o}
 [f]_{W^{s,\frac{1}{s}}_2(\R)} \aeq \|\laps{s} f\|_{L^{\frac{1}{s}}(\R)} \aleq  \|\laps{t} f\|_{L^{\frac{1}{t}}(\R)}  \aeq [f]_{W^{t,\frac{1}{t}}_2(\R)}.
\end{equation}
Now, if $f: B \to \R$, we can assume by scaling w.l.o.g. $B= (-1,1)$. By invariance under additive constants we also assume 
\begin{equation}\label{eq:la:sob:mvzero}
(f)_{B} = 0.
\end{equation}
Set 
\[
 \tilde{f}(x) := \begin{cases}
                  f(-2-x) \quad x \in (-2,-1)\\
                  f(x) \quad x \in (-1,1)\\
                  f(2-x) \quad x \in (1,2).\\
                 \end{cases}
\]
Then, for every $\eta \in C_c^\infty((-3/2,3/2))$, $\eta \equiv 1$ in $(-1,1)$, using~\eqref{sobineq.o} we have 
\[
 [f]_{W^{s,\frac{1}{s}}_2((-1,1))}\leq [\eta \tilde{f}]_{W^{s,\frac{1}{s}}_2(\R)} \aleq [\eta \tilde{f}]_{W^{t,\frac{1}{t}}_2(\R)}.
\]
In the proof of \Cref{la:poinc} we already estimated that
\[
 [\eta \tilde{f}]_{W^{t,\frac{1}{t}}_2(\R)} \aleq [f]_{W^{t,\frac{1}{t}}_2(B)} + \|f\|_{L^{\frac{1}{t}}(B)} + \|f\|_{L^{2}(B)}.
\]
Since we have \eqref{eq:la:sob:mvzero}, the statement of \Cref{la:poinc} implies 
\[
 [f]_{W^{s,\frac{1}{s}}_2((-1,1))} \aleq [\eta \tilde{f}]_{W^{t,\frac{1}{t}}_2(\R)}\aleq [f]_{W^{t,\frac{1}{t}}_2(B)}.
\]
We can conclude.
\end{proof}

We also record the following important potential formula for the fractional Laplacian. It is most likely well-known to experts, but we give the argument for completeness.
\begin{lemma}\label{Toperator}
Let $f\in C_c^\infty(\R)$. Then for each $s\in (0,1)$ and $x\in \R$ it holds that 
\begin{equation}
\int_{\R} \frac{f(x)-f(y)-f'(y)(x-y)}{\vert x-y \vert^{1+s+1}}\,dy=C_{s} (-\Delta)^{\frac{s+1}{2}} f(x),
\end{equation}
for some $C_{s}$ depending only on $s$.
\end{lemma}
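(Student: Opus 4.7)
The plan is to identify the Fourier symbol of the operator $Tf(x):=\int_\R \frac{f(x)-f(y)-f'(y)(x-y)}{|x-y|^{2+s}}\,dy$ and show that it equals $C_s|\xi|^{s+1}$; since $(-\Delta)^{(s+1)/2}$ is defined as the Fourier multiplier with symbol $|\xi|^{s+1}$, the identity then follows from Fourier inversion.

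First I would verify that $Tf(x)$ converges absolutely for $f\in C_c^\infty(\R)$: Taylor's theorem gives $|f(x)-f(y)-f'(y)(x-y)|\lesssim\|f''\|_\infty|x-y|^2$ near $y=x$, producing the integrable singularity $|x-y|^{-s}$ (using $s<1$), while for $|y|$ large the numerator is simply $f(x)$ and is controlled by the $|x-y|^{-2-s}$ kernel.

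Next, after the substitution $h=x-y$, I would take the Fourier transform in $x$ and interchange integrals (justified by the absolute integrability established above, e.g.\ after pairing against a Schwartz function and applying Fubini). This gives $\widehat{Tf}(\xi)=m(\xi)\hat f(\xi)$ with
\[
m(\xi)=\int_\R \frac{1-e^{-ih\xi}-ih\xi\,e^{-ih\xi}}{|h|^{2+s}}\,dh.
\]
Expanding $e^{-ih\xi}(1+ih\xi)$ around $h=0$ shows the numerator is $O((h\xi)^2)$ at the origin and $O(1+|h\xi|)$ at infinity, so $m(\xi)$ is absolutely convergent. Its imaginary part $\sin(h\xi)-h\xi\cos(h\xi)$ is odd in $h$, hence $m$ is real-valued. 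Rescaling $h=t/|\xi|$ for $\xi\neq 0$ yields $m(\xi)=|\xi|^{s+1}m(\mathrm{sgn}\,\xi)$, and parity once more gives $m(1)=m(-1)=:C_s$. Thus $\widehat{Tf}(\xi)=C_s|\xi|^{s+1}\hat f(\xi)=C_s\widehat{(-\Delta)^{(s+1)/2}f}(\xi)$.

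The one technical obstacle is the rigorous interchange of integration. A self-contained alternative that avoids Fourier analysis is to integrate by parts in $h$ separately on $(0,\infty)$ and $(-\infty,0)$, using the identity $\partial_h[f(x)-f(x-h)-hf'(x-h)]=hf''(x-h)$ together with a primitive of $|h|^{-2-s}$; the boundary terms vanish because the Taylor remainder is $O(h^2)$ at $h=0$ and $f$ is compactly supported at infinity. This reduces $Tf(x)$ to $\tfrac{1}{1+s}\int_\R|x-y|^{-s}f''(y)\,dy$, a constant multiple of the Riesz potential $I_{1-s}$ applied to $f''$; since $f''=-(-\Delta)f$ and $I_{1-s}\circ(-\Delta)=(-\Delta)^{(s+1)/2}$, the lemma follows.
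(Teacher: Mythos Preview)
Your proof is correct, but it takes a different route from the paper's. The paper avoids Fourier analysis entirely: it symmetrizes in $h$ to write
\[
Tf(x)=\tfrac{1}{2}\int_\R \frac{2f(x)-f(x+h)-f(x-h)-\partial_h\bigl(2f(x)-f(x+h)-f(x-h)\bigr)\,h}{|h|^{2+s}}\,dh,
\]
and then integrates the $\partial_h(\cdots)\,h$ term by parts using $\partial_h\bigl(h|h|^{-2-s}\bigr)=-(1+s)|h|^{-2-s}$. This collapses the expression to a constant times $\int_\R \frac{2f(x)-f(x+h)-f(x-h)}{|h|^{2+s}}\,dh$, which is precisely the second-difference definition of $(-\Delta)^{(s+1)/2}f(x)$.

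Your Fourier-symbol computation and your alternative reduction to $I_{1-s}f''$ are both valid; the first is the cleanest if one is willing to invoke the multiplier definition of $(-\Delta)^{(s+1)/2}$, while your second route is closer in spirit to the paper but still appeals to the symbol identity $I_{1-s}\circ(-\Delta)=(-\Delta)^{(s+1)/2}$ at the end. The paper's argument has the advantage of staying entirely on the spatial side and landing on the pointwise second-difference formula, which is the form actually used later (and which makes sense without any tempered-distribution considerations). Conversely, your Fourier approach generalizes immediately to higher dimensions and to other kernels, and makes the homogeneity of the symbol transparent.
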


\begin{proof}
We notice that 
\[
\begin{split}
 Tf(x) :=& \int_{\R} \frac{f(x)-f(y)-f'(y)(x-y)}{|x-y|^{2+s}}\, dy\\
 =& \int_{\R} \frac{f(x)-f(x+h)+f'(x+h)(h)}{|h|^{2+s}}\, dh\\
 =& \frac{1}{2} \int_{\R} \frac{f(x)-f(x+h)+f'(x+h)(h)}{|h|^{2+s}}\, dh+\frac{1}{2} \int_{\R} \frac{f(x)-f(x-h)+f'(x-h)(-h)}{|h|^{2+s}}\, dh\\
 =& \frac{1}{2} \int_{\R} \frac{2f(x)-f(x+h)-f(x-h)+f'(x+h)(h) -f'(x-h)(h)}{|h|^{2+s}}\, dh\\
 =& \frac{1}{2} \int_{\R} \frac{2f(x)-f(x+h)-f(x-h)+\partial_h f(x+h)\, (h) +\partial_h f(x-h)(h)}{|h|^{2+s}}\, dh\\
 =& \frac{1}{2} \int_{\R} \frac{2f(x)-f(x+h)-f(x-h)+\partial_h \brac{f(x+h)+f(x-h)}\, (h) }{|h|^{2+s}}\, dh\\
 =& \frac{1}{2} \int_{\R} \frac{2f(x)-f(x+h)-f(x-h)-\partial_h \brac{2f(x) - f(x+h)-f(x-h)}\, (h) }{|h|^{2+s}}\, dh.\\
 \end{split}
\]
Integrating by parts  and observing that 
\[
 \partial_h \frac{h}{|h|^{2+s}} = \frac{|h|^{2+s}-h (2+s) |h|^{1+s} \frac{h}{|h|}}{|h|^{4+2s}} = -(1+s) \frac{1}{|h|^{2+s}}
,\]
we deduce that
\[
\begin{split}
 Tf(x)  =& \frac{1}{2} \int_{\R} \frac{2f(x)-f(x+h)-f(x-h)-\partial_h \brac{2f(x) - f(x+h)-f(x-h)}\, (h) }{|h|^{2+s}}\, dh\\
 =& \frac{1}{2} \int_{\R} \frac{2f(x)-f(x+h)-f(x-h)-(1+s) \brac{2f(x) - f(x+h)-f(x-h)} }{|h|^{2+s}}\, dh\\
 =& \frac{1-(1+s)}{2} \int_{\R} \frac{2f(x)-f(x+h)-f(x-h)}{|h|^{2+s}}\, dh\\
 =& \frac{1-(1+s)}{2} c_{d,s} (-\Delta)^{\frac{s+1}{2}} f(x).
 \end{split}
\]

\end{proof}

\subsection{The fractional mean curvature}
While it is irrelevant for most calculation-based analysis, for convenience and geometric arguments we shall assume that whenever possible, curves are parametrized counter-clockwise,
\begin{equation}\label{conclock}
\gamma\mbox{  is a counter clockwise parametrization of its image}, 
\end{equation}
For some interval $I\subset\R$ and $\gamma\in Lip(I,\R^2)$, for $s\in (0,1)$ and $p\geq 1$, we define the $s,p$-nonlocal Willmore energy of $\gamma$ as 
\begin{equation*}
\mathscr{W}_{s,p}\brac{\gamma\big |_{I}}=\left(\int_{I}\left(\int_{I} \frac{\langle n(\gamma(y)),\gamma(y)-\gamma(x)\rangle}{\left|\gamma(y)-\gamma(x)\right|^{2+s}}\left|\gamma'(y)\right|\,dy\right)^p\left|\gamma'(x)\right|\,dx\right)^\frac{1}{p},
\end{equation*}
where $n(\gamma(y))$ is a.e. a unit normal to the curve $\gamma$ at $\gamma(y)$.
We remind the reader of our abuse of notation by writing $\mathscr{W}_{s} := \mathscr{W}_{s,\frac{1}{s}}$.
Under the assumption \eqref{conclock} for a.e. $y\in I$ we define fix the direction of the vectorfield $n(\gamma(y))$ as
\begin{equation*}
n(\gamma(y)):=\gamma'(y)^\perp.
\end{equation*}

We define
\begin{equation*}
\tilde{X}:=\left\lbrace \gamma\in Lip\left(\S^1,\R^2\right)\Bigg|\, \begin{aligned}
&\mbox{$\gamma$ is a homeomorphism}\\ 
&\left|\gamma'\right|\equiv 1 \mbox{  a.e. in  }\S^1\\
&\mbox{$\gamma$ is convex in $\S^1$}\\
 &\mbox{$\gamma$ satisfies~\eqref{conclock} in $\S^1$}
\end{aligned} \right\rbrace.
\end{equation*}
Clearly $X$ in \eqref{def:setX} is a subset of $\tilde{X}$ -- up to inverting the orientation so that \eqref{conclock} holds.

For every $\gamma\in \tilde{X}$ the $s,p$-nonlocal Willmore energy takes the form
\begin{equation}\label{sdcv6530}
\mathscr{W}_{s,p}(\gamma):=\left(\int_{\S^1}\left(\int_{\S^1}\frac{\langle n(\gamma(y)),\gamma(y)-\gamma(x)\rangle}{\left|\gamma(y)-\gamma(x)\right|^{2+s}} \right)^p\,dx\right)^s.
\end{equation}

In the definitions of $X$ and $\tilde{X}$ we assumed convexity, the following is the precise definition we are going to use.
\begin{definition}[convexity]\label{def:convexcurve}
Let $\gamma: I \to \R^2$ be continuous. Given some subset $A\subset I$, we say that $\gamma$ is convex in $A$ if 
\begin{equation}\label{fcdghte}
\begin{split}
\mbox{for all  }& x\in A\mbox{  there exists  } \nu_x\in \S^1 \mbox{ such that  }\\
&\nu_x\cdot (\gamma(x)-\gamma(y))\geq 0 \mbox{  for all  } y\in A.
\end{split}
\end{equation}  
\end{definition}

It is worth pointing out that for every $\gamma\in \tilde{X}$ and for a.e. $x\in\S^1$ one has that $\nu_x$ satisfying the inequality in~\eqref{fcdghte} is unique, and actually $\nu_x=n(\gamma(x))$, see for instance Corollary~\ref{uniqnorma}. In particular for convex curves we have
\[
 \langle n(\gamma(y),\gamma(y)-\gamma(x)\rangle \geq 0 \quad \text{a.e. $x,y \in I$}
\]

Our analysis in this paper relies on a relation between curves with finite Willmore energy and curves that belong to the Sobolev space $W^{1+s,\frac{1}{s}}_2$. One direction is the following Lemma
\begin{lemma}\label{la:energyspace} Let $s \in (0,1)$ and $\gamma: \S^1 \to \R^2$ be a BiLipschitz homeomorphism with $|\gamma'| \equiv 1$ a.e. in $\S^1$ and such that 
\[
 [\gamma']_{W^{s,\frac{1}{s}}_2(\S^1)} < +\infty.
\]
Then, it holds that 
\[
\mathscr{W}_{s}(\gamma) =\int_{\S^1}\abs{\int_{\S^1}{\frac{\langle n(\gamma(y)),\gamma(y)-\gamma(x)\rangle}{\left|\gamma(y)-\gamma(x)\right|^{2+s}}} dy}^{\frac{1}{s}}\,dx < +\infty.
\]

\end{lemma}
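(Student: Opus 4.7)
The asserted equality
\[
\mathscr{W}_s(\gamma)=\int_{\S^1}\Bigl|\int_{\S^1}\tfrac{\langle n(\gamma(y)),\gamma(y)-\gamma(x)\rangle}{|\gamma(y)-\gamma(x)|^{2+s}}dy\Bigr|^{\frac{1}{s}}dx
\]
is immediate from the definition of $\mathscr{W}_{s,\frac{1}{s}}$ together with $|\gamma'|\equiv 1$ a.e., so the substantive content is the finiteness. Writing $I(x)$ for the inner integral, the plan is to show $I\in L^{\frac{1}{s}}(\S^1)$, which is stronger than almost-everywhere pointwise finiteness of $I$.

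The starting observation is orthogonality: $n(\gamma(y))=\gamma'(y)^\perp$ is perpendicular to $\gamma'(y)$, so $\langle\gamma'(y)^\perp,\gamma'(y)(y-x)\rangle=0$. Subtracting this vanishing term from the numerator converts the inner integral into one with a genuine Taylor-type remainder,
\[
I(x)=\int_{\S^1}\frac{\langle\gamma'(y)^\perp,R(x,y)\rangle}{|\gamma(y)-\gamma(x)|^{2+s}}dy,\qquad R(x,y):=\gamma(y)-\gamma(x)-\gamma'(y)(y-x).
\]
The BiLipschitz hypothesis together with $|\gamma'|\equiv 1$ yields $|\gamma(y)-\gamma(x)|\aeq d_{\S^1}(x,y)$, whence pointwise
\[
|I(x)|\aleq\int_{\S^1}\frac{|R(x,y)|}{d_{\S^1}(x,y)^{2+s}}dy.
\]
The right-hand side is the absolute-value analogue of the operator that \Cref{Toperator} identifies (on $\R$) with $C_s(-\Delta)^{\frac{s+1}{2}}\gamma(x)$.

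To turn this pointwise estimate into an $L^{\frac{1}{s}}$ bound, I would apply Cauchy--Schwarz with weight $|t-y|^{1+2s}$ to the identity $R(x,y)=\int_x^y(\gamma'(t)-\gamma'(y))dt$, deducing
\[
|R(x,y)|\aleq|y-x|^{1+s}\left(\int_x^y\tfrac{|\gamma'(t)-\gamma'(y)|^2}{|t-y|^{1+2s}}dt\right)^{\frac{1}{2}}\leq|y-x|^{1+s}\,\mathcal{G}(y),
\]
where $\mathcal{G}(y):=\bigl(\int_{\S^1}|\gamma'(y)-\gamma'(z)|^2/|y-z|^{1+2s}\,dz\bigr)^{1/2}$ is precisely the pointwise density of the $W^{s,\frac{1}{s}}_2$ seminorm, i.e.\ $\|\mathcal{G}\|_{L^{1/s}(\S^1)}=[\gamma']_{W^{s,1/s}_2(\S^1)}$. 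After lifting from $\S^1$ to $\R$ via the reflection cutoff trick used in \Cref{la:poinc} (so that \Cref{th:steinident} is available), a fractional Hardy inequality applied dyadically in the scale $|y-x|$ absorbs the $|y-x|^{-1}$ kernel and gives
\[
\|I\|_{L^{\frac{1}{s}}(\S^1)}\aleq\|\mathcal{G}\|_{L^{\frac{1}{s}}(\S^1)}\aeq[\gamma']_{W^{s,\frac{1}{s}}_2(\S^1)}<+\infty.
\]

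The main obstacle is that \Cref{Toperator} is an \emph{identity} for the signed integral $\int R(x,y)/|x-y|^{2+s}dy$, so the absolute value inside our integrand prevents a direct appeal. The Cauchy--Schwarz/fractional Hardy substitute sketched above requires careful bookkeeping near the diagonal (where the emergent Hardy kernel is log-divergent), but follows the template of the analogous finiteness estimates for the tangent-point energy in \cite{BR15,BRSV21}.
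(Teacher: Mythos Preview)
Your opening steps are fine, and they match the paper: the orthogonality trick rewriting the numerator as $\langle\gamma'(y)^\perp,R(x,y)\rangle$ with $R(x,y)=\gamma(y)-\gamma(x)-\gamma'(y)(y-x)$, and the BiLipschitz replacement $|\gamma(y)-\gamma(x)|\aeq|x-y|$. But the argument breaks at the step you flag yourself. After Cauchy--Schwarz you arrive at
\[
|I(x)|\aleq\int_{\S^1}\frac{\mathcal G(y)}{|x-y|}\,dy,
\]
and the kernel $|x-y|^{-1}$ is \emph{not locally integrable} in one dimension. The right-hand side is therefore $+\infty$ for a.e.\ $x$ whenever $\mathcal G$ does not vanish near $x$, so this inequality is vacuous. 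No ``fractional Hardy inequality'' recovers from this: the map $\mathcal G\mapsto\int\mathcal G(y)|x-y|^{-1}dy$ is the Riesz potential of order $0$ in one dimension, which is not bounded on any $L^p$. Dyadic decomposition does not help either, since each dyadic shell contributes a term of size $\approx 1$ and the sum over shells is a genuine logarithmic divergence. The loss happened the moment you replaced the signed inner integral by $\int|R(x,y)|/|x-y|^{2+s}\,dy$; this absolute-value operator is (roughly) a Triebel--Lizorkin $\dot F^{1+s}_{1/s,1}$-type quantity, which the assumed $W^{s,1/s}_2$-control of $\gamma'$ (a $\dot F^{1+s}_{1/s,2}$-type hypothesis) does not dominate.

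The paper's proof avoids exactly this trap by \emph{never} taking the absolute value inside the $y$-integral. Instead it first swaps $n(\gamma(y))$ for $n(\gamma(x))$, estimating the commutator
\[
\int_{\S^1}\brac{\int_{\S^1}\frac{|\langle n(\gamma(y))-n(\gamma(x)),R(x,y)\rangle|}{|\gamma(y)-\gamma(x)|^{2+s}}\,dy}^{1/s}dx\aleq[\gamma']_{W^{s/2,2/s}_2}^2\aleq[\gamma']_{W^{s,1/s}_2}^2
\]
via H\"older and the Sobolev embedding of \Cref{la:sob}. With $n(\gamma(x))$ now outside the $y$-integral, one is reduced to controlling the \emph{vector-valued signed} integral $\int_{\S^1}R(x,y)/|\gamma(y)-\gamma(x)|^{2+s}\,dy$. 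A second error estimate (using the formula for $1-|x-y|^2/|\gamma(y)-\gamma(x)|^2$) replaces the denominator by $|x-y|^{2+s}$, and then \Cref{Toperator} applies directly---as an identity, with all cancellation intact---to identify the remaining integral with $C_s(-\Delta)^{(s+1)/2}\gamma(x)$, whose $L^{1/s}$-norm is $\aeq[\gamma']_{W^{s,1/s}_2}$ by \Cref{th:steinident}. The swap $n(\gamma(y))\to n(\gamma(x))$ is precisely the missing idea in your sketch.
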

Let us stress two things: We will not really use the previous lemma in our analysis, obtaining a quantitative converse of \Cref{la:energyspace} is the main challenge of the present paper -- see \Cref{s:sobcontrol}. Secondly, the assumption that $\gamma$ is a homeomoprhism is not needed, certain crossings and figure-eight shapes are permissible, this will be investigated more in future work.

\begin{proof}
Since $\langle n(\gamma(y)), \gamma'(y)\rangle = 0$ a.e. we have
\[
\begin{split}
&\langle n(\gamma(y)),\gamma(y)-\gamma(x)\rangle\\
=&\langle n(\gamma(y)), \gamma(y)-\gamma(x)-\gamma'(y)(y-x)\rangle.
\end{split}
\]
Since $\gamma$ is a BiLipschitz homeomorphism we also have $|\gamma(x)-\gamma(y)| \aeq |x-y|$.

Then, we first observe that 
\[
\begin{split}
&\int_{\S^1}\left(\int_{\S^1}\abs{\frac{\langle n(\gamma(y)) - n(\gamma(x)), \gamma(y)-\gamma(x)-\gamma'(y)(y-x)\rangle }{\left|\gamma(y)-\gamma(x)\right|^{2+s}}} \,dy\right)^\frac{1}{s}\,dx\\
\aleq&\int_{\S^1}\left(\int_{\S^1}\frac{|\gamma'(y)-\gamma'(x)|\, |\gamma(y)-\gamma(x)-\gamma'(y)(x-y)|}{|x-y|^{2+s}}\, dy \right)^\frac{1}{s}\,dx.\\
\end{split}
\]
Using the fundamental theorem of calculus, we have 
\[
|\gamma(y)-\gamma(x)-\gamma'(y)(x-y)| \leq |x-y|\int_{[x,y]} |\gamma'(z)-\gamma'(y)|\, dz
\]
By H\"older's inequality we then find 
\[
\begin{split}
&\int_{\S^1}\left(\int_{\S^1}\abs{\frac{\langle n(\gamma(y)) - n(\gamma(x)), \gamma(y)-\gamma(x)-\gamma'(y)(y-x)\rangle }{\left|\gamma(y)-\gamma(x)\right|^{2+s}}} dy\right)^\frac{1}{s}\,dx\\
\aleq&[\gamma']_{W^{\frac{s}{2},\frac{2}{s}}_2(\S^1)}^2\\
\aleq&[\gamma']_{W^{s,\frac{1}{s}}_2(\S^1)}^2.
\end{split}
\]
In the last line we use Sobolev embedding \Cref{la:sob}.

Thus, in order to show $\mathscr{W}_s(\gamma) < \infty$ we exchange $n(\gamma(y))$ with $n(\gamma(x))$ and simply need to show 
\[
\begin{split}
&\int_{\S^1}\abs{\int_{\S^1}{\frac{\langle n(\gamma(x)),\gamma(y)-\gamma(x)-\gamma'(y)(x-y)\rangle}{\left|\gamma(y)-\gamma(x)\right|^{2+s}}} dy}^{\frac{1}{s}}\,dx\\
\aleq& \int_{\S^1}\abs{\int_{\S^1}{\frac{\gamma(y)-\gamma(x)-\gamma'(y)(x-y)}{\left|\gamma(y)-\gamma(x)\right|^{2+s}}} dy}^{\frac{1}{s}}\,dx < +\infty.
\end{split}
\]
Next, we observe (see e.g. \cite{BRS16,BRSV21} and references within)
\[
\begin{split}
&1-\frac{|x-y|^2}{|\gamma(y)-\gamma(x)|^2}\\
=&\frac{|\gamma(y)-\gamma(x)|^2-|x-y|^2}{|\gamma(x)-\gamma(y)|^2}\\
=&\frac{\int_{[x,y]} \int_{[x,y]} \gamma'(z_1) \cdot \gamma'(z_2) -1 \,dz_1\, dz_2 }{|\gamma(x)-\gamma(y)|^2}\\
=&\frac{\int_{[x,y]} \int_{[x,y]} |\gamma'(z_1) - \gamma'(z_2)|^2\, dz_1\, dz_2}{|\gamma(x)-\gamma(y)|^2}\\
\end{split}
\]
This implies that 
\[
\abs{1-\frac{|x-y|^2}{|\gamma(y)-\gamma(x)|^2}} \aeq 
=\mvint_{[x,y]} \mvint_{[x,y]} |\gamma'(z_1) - \gamma'(z_2)|^2\, dz_1\, dz_2.\\
\]
Thus 
\[
\begin{split}
& \int_{\S^1}\abs{\int_{\S^1}{\frac{\gamma(y)-\gamma(x)-\gamma'(y)(x-y)}{\left|\gamma(y)-\gamma(x)\right|^{2+s}}} \,dy}^{\frac{1}{s}}\,dx\\
\aleq&\int_{\S^1}\abs{\int_{\S^1}{\frac{\gamma(y)-\gamma(x)-\gamma'(y)(x-y)}{\left|x-y\right|^{2+s}}}\, dy}^{\frac{1}{s}}\,dx + [\gamma']_{W_2^{\frac{s}{3},\frac{3}{s}}(\S^1)}^3\\
\aleq&\int_{\S^1}\abs{\int_{\S^1}{\frac{\gamma(y)-\gamma(x)-\gamma'(y)(x-y)}{\left|x-y\right|^{2+s}}}\, dy}^{\frac{1}{s}}\,dx + [\gamma']_{W_2^{s,\frac{1}{s}}(\S^1)}^3,
\end{split}
\]
where in the last line we used again the Sobolev embedding \Cref{la:sob}. We now conclude by using \Cref{Toperator}, which -- after a sterographic projection to the sphere implies 
\[
\int_{\S^1}\abs{\int_{\S^1}{\frac{\gamma(y)-\gamma(x)-\gamma'(y)(x-y)}{\left|x-y\right|^{2+s}}}\, dy}^{\frac{1}{s}}\,dx \aleq \|\laps{s+1} \gamma\|_{L^{\frac{1}{s}} (\S^1)} \aeq [\gamma']_{W_2^{s,\frac{1}{s}}(\S^1)},
\]
where in the last inequality we used Stein's theorem, \Cref{th:steinident}.
\end{proof}

\subsection{The maximum principle for the nonlocal mean curvature}
The maximum principle for nonlocal mean curvature is well known and will play a crucial role in our argument.
\begin{proposition}[Maximum Principle for the nonlocal mean curvature]\label{milan}
Let $A,B\subset\R^n$ be measurable and $s\in (0,1)$. Moreover, we assume that $A\subset B$ and there exists some point $z\in\partial A\cap \partial B$. 

{If $H^s_A(z)$ and $H^s_{B}(z)$ exist, then we have}
\begin{equation}\label{MPNMC}
H_{A}^s(z)\geq H_{B}^s(z).
\end{equation} 
Furthermore, if $B$ is convex it holds that 
\begin{equation}\label{POCVNBBG}
H_{B}^s(z)\geq 0.
\end{equation}
{for any $z$ where $H_{B}^s(z)$ exists.}
\end{proposition}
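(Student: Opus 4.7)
The approach is to establish the two inequalities separately by exploiting the pointwise monotonicity of the integrand in the definition of $H^s$, and the reflective symmetry of half-spaces, respectively.

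For the comparison inequality \eqref{MPNMC}, the starting observation is that $A\subset B$ is equivalent to the pointwise bound $\chi_A\leq \chi_B$. Rewriting $\chi_{E^c}-\chi_E = 1-2\chi_E$, the numerator in the integrand satisfies
\[
(\chi_{A^c}(y)-\chi_A(y)) - (\chi_{B^c}(y)-\chi_B(y)) = 2(\chi_B(y)-\chi_A(y)) \geq 0
\]
for a.e.\ $y\in \R^n$. Since $|z-y|^{n+s}>0$, for every $\eps>0$ the truncated integral
\[
\int_{\R^n \setminus B_\eps(z)} \frac{(\chi_{A^c}(y)-\chi_A(y))-(\chi_{B^c}(y)-\chi_B(y))}{|z-y|^{n+s}}\, dy
\]
is nonnegative. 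Since $H^s_A(z)$ and $H^s_B(z)$ exist as principal values by hypothesis, passing to the limit $\eps\downarrow 0$ yields $H^s_A(z)-H^s_B(z)\geq 0$.

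For \eqref{POCVNBBG}, I would exploit convexity by choosing a supporting hyperplane to $B$ at $z\in \partial B$, and letting $\Pi$ be the closed half-space delimited by it that contains $B$. Then $B\subset \Pi$ and $z\in \partial \Pi$, so the previous step reduces the problem to showing $H^s_\Pi(z)=0$. This is a symmetry computation: the reflection $R(y):=2z-y$ swaps $\Pi$ and $\Pi^c$ up to a set of measure zero, preserves $|z-y|$, and leaves the truncation ball $B_\eps(z)$ invariant. Hence the change of variables $w=R(y)$ shows that the truncated integral defining $H^s_\Pi(z)$ equals its own negative, and therefore vanishes for every $\eps>0$. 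In particular $H^s_\Pi(z)$ exists and is $0$, and \eqref{MPNMC} applied to the inclusion $B\subset \Pi$ yields $H^s_B(z)\geq H^s_\Pi(z)= 0$.

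The argument is short; the only point that deserves some attention is ensuring that the principal-value existence hypotheses are compatible with each manipulation. For the comparison step this is immediate from the assumption that both principal values exist. For the half-space no existence issue arises because the truncated integrand is already odd under $R$, so the truncated integrals vanish identically \emph{before} any limit is taken.
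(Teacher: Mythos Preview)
Your proof is correct and follows essentially the same approach as the paper: a pointwise comparison of the integrands $\chi_{A^c}-\chi_A\geq \chi_{B^c}-\chi_B$ for \eqref{MPNMC}, and reduction to a supporting half-space for \eqref{POCVNBBG}. The only difference is that the paper simply asserts $H^s_{\pi_z}(z)=0$, whereas you supply the explicit point-reflection argument $R(y)=2z-y$ showing the truncated integrals vanish identically; this is a welcome detail but not a different strategy.
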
 

\begin{proof}
According to the hypothesis we have that 
\begin{equation*}
B^c\subset A^c.
\end{equation*}
Therefore, for each $y\in \R^n$ we have that 
\begin{equation*}
\chi_{A^c}(y)-\chi_{A}(y)\geq \chi_{B^c}(y)-\chi_{B}(y).
\end{equation*}
From this equation we evince that for each $\epsilon>0$ 
\begin{equation*}
\int_{\R^n\setminus B_{\epsilon}(z)} \frac{\chi_{A^c}(y)-\chi_{A}(y)}{\left|z-y\right|^{n+s}}\,dy\geq \int_{\R^n\setminus B_{\epsilon}(z)}\frac{ \chi_{B^c}(y)-\chi_{B}(y)}{\left|z-y\right|^{n+s}}\,dy.
\end{equation*}
Taking the limit with respect to $\epsilon\to 0^+$ in both sides we conclude the proof of~\eqref{MPNMC}, since we assume that this limit exists.

Since $B$ is convex, for each $z\in \partial B$ there exists an half plane $\pi_z$ such that 
\begin{equation*}
B\subset \pi_z.
\end{equation*}
Therefore, applying~\eqref{MPNMC} we obtain that 
\begin{equation*}
H_{B}^s(z)\geq H_{\pi_z}^s(z)=0.\qedhere
\end{equation*}
\end{proof}

In the following proposition we establish sufficient conditions for the boundary parametrization of a set $E\subset\R^2$ in order to rewrite the nonlocal mean curvature as a surface integral. We stress that this result is not sharp  and that it is possible to determine more  general conditions on the regularity of a set for such identity to hold. The proof of Proposition~\ref{salame} is in Appendix~\ref{catamarano}.

\begin{proposition}\label{salame}
Let $s\in (0,1)$, $\gamma\in Lip(\S^1,\R^2)$ be a homeomorphism such that $\left|\gamma'\right|\equiv 1$ a.e. in $\S^1$ and $\gamma$ satisfies the convexity assumption~\eqref{fcdghte} in $\S^1$. Then, if we denote $E:=\mbox{co}\left(\gamma\left(\S^1\right)\right)$, for a.e. $x\in \S^1$ it holds that 
\begin{equation}\label{-fgrop0-0}
H_{\partial E}^s(\gamma(x))=\frac{2}{s}\int_{\S^1} \frac{\langle n(\gamma(y)),\gamma(y)-\gamma(x)\rangle}{\left|\gamma(y)-\gamma(x)\right|^{2+s}}\,dy,
\end{equation}  
where $n(\gamma(y)):=\gamma'(y)^\perp$.
\end{proposition}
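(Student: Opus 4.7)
The plan is to convert the volume integral defining $H^s_{\partial E}(\gamma(x))$ into a boundary integral over $\partial E$ via the divergence identity
\[
\frac{1}{|y-\gamma(x)|^{2+s}}=-\frac{1}{s}\,\mathrm{div}_y\!\brac{\frac{y-\gamma(x)}{|y-\gamma(x)|^{2+s}}},
\]
which follows from a direct computation in $\R^2$. First I would fix $x\in\S^1$ at which $\gamma$ is differentiable and $\gamma'(x)^\perp$ is the unique outward unit normal to $E$ at $\gamma(x)$; this holds a.e.\ by Corollary~\ref{uniqnorma}. Let $T_x:=\{y\in\R^2:\langle y-\gamma(x),\gamma'(x)^\perp\rangle\leq 0\}$ denote the tangent half-space; convexity of $E$ forces $E\subset T_x$, and this containment is the geometric fact driving the entire argument.

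A symmetrization step exploits $H^s_{\partial T_x}(\gamma(x))=0$ (by reflection across $\partial T_x$): decomposing $\chi_{E^c}-\chi_E=(\chi_{T_x^c}-\chi_{T_x})+2\chi_{T_x\setminus E}$ gives
\[
H^s_{\partial E}(\gamma(x))=2\int_{T_x\setminus E}\frac{dy}{|y-\gamma(x)|^{2+s}}.
\]
For a.e.\ $x$ the tangent line $\partial T_x$ is truly tangent to $\partial E$ at $\gamma(x)$, so $T_x\setminus E$ degenerates to zero thickness there and the right-hand side is meaningful. I would then apply the divergence theorem on the Lipschitz domain $(T_x\setminus E)\cap B_R(\gamma(x))\setminus B_\eps(\gamma(x))$ to the vector field $K(y):=(y-\gamma(x))/|y-\gamma(x)|^{2+s}$, and let $R\to\infty$, $\eps\to 0$. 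Its boundary decomposes into the arc of $\partial E$ in the annulus (outward normal $-n_E$), producing the desired integrand; the segments of $\partial T_x$ outside $\overline E$, contributing nothing because $\langle\gamma'(x)^\perp,y-\gamma(x)\rangle\equiv 0$ on $\partial T_x$; the arc on $\partial B_R$, contributing $O(R^{-s})\to 0$; and the arc on $\partial B_\eps\cap(T_x\setminus E)$, which vanishes as $\eps\to 0$ thanks to the tangency. Parametrizing $\partial E$ by $\gamma$ — a bijective arc-length parametrization with $n_E(\gamma(y))=\gamma'(y)^\perp$ a.e., by convexity and the homeomorphism hypothesis — converts the $\partial E$ contribution into the integral over $\S^1$ appearing in \eqref{-fgrop0-0}, and combining with the factor $2$ from the symmetrization closes the argument.

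The main obstacle is quantifying the vanishing of the $\partial B_\eps$ contribution, where the $\eps^{-1-s}$ singularity of $|K|$ is balanced only by the thinness of $T_x\setminus E$ near $\gamma(x)$. In local graph coordinates $\partial E$ is a nonnegative convex height function $f$ with $f(0)=f'(0)=0$ a.e., and turning this tangency into quantitative decay sufficient to kill the singular prefactor is the delicate technical point. A conceptually cleaner alternative, plausibly the route pursued in the appendix, is to first approximate $E$ by smooth convex sets $E_n$ (for instance via Minkowski sums with small disks), establish the identity for each $E_n$ by the classical smooth divergence-theorem computation, and pass to the limit using the nonnegativity $\langle n(\gamma(y)),\gamma(y)-\gamma(x)\rangle\geq 0$ supplied by convexity, together with monotone or dominated convergence applied on both sides of the identity.
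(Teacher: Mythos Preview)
Your approach is essentially the same as the paper's: the core is the divergence identity $\mathrm{div}_y\,\frac{y-\gamma(x)}{|y-\gamma(x)|^{2+s}}=-\frac{s}{|y-\gamma(x)|^{2+s}}$ together with the divergence theorem on an $\eps$-punctured domain, leaving a boundary integral over $\partial E$ and a correction on $\partial B_\eps(\gamma(x))$ that must be shown to vanish. Your preliminary symmetrization against the tangent half-space $T_x$ is a slight variant: the paper instead applies the divergence theorem directly to $E\setminus B_\eps$ and $E^c\setminus B_\eps$ and combines, obtaining the same $\partial E$-integral plus a correction $\eps^{-1-s}\bigl(2\sigma^1(E^c\cap\partial B_\eps)-2\pi\eps\bigr)$, which is exactly your $\partial B_\eps\cap(T_x\setminus E)$ term in disguise.

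The genuine gap is precisely the one you flag but do not close. First-order tangency $f(0)=f'(0)=0$ only yields $f(\pm\eps)=o(\eps)$, hence a $\partial B_\eps$-contribution of order $o(\eps)\cdot\eps^{-1-s}=o(\eps^{-s})$, which need not vanish. The paper's resolution is to invoke Alexandrov's theorem: a convex function is \emph{twice} differentiable a.e., so at a.e.\ base point one has a second-order Taylor expansion $f(\pm\eps)=\tfrac{1}{2}f''(0)\eps^2+o(\eps^2)$, making the correction $O(\eps^{2})\cdot\eps^{-1-s}=O(\eps^{1-s})\to 0$. This is the missing idea; once you add it, your argument goes through. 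The smooth-approximation route you sketch at the end is plausible but is not what the paper does, and carrying out the limit on both sides would itself require care (in particular, convergence of the nonlocal mean curvature at the chosen boundary point).
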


\section{Small energy controls BMO-parametrization}\label{VMO}
The main result of this section is the fact hat when the Willmore energy is suitably small, then the $BMO$-norm of the derivative of the parametrization $\gamma$ is dominated by the Willmore energy. More precisely we have

\begin{proposition}\label{pr:VMOcontrol}
For any $p \in [1,\infty)$, $s \in (0,1)$ there exists a uniform $\eps > 0$ such that the following holds:

Assume that $\gamma \in C^1(B_{R_0},\R^2)$%
, $|\gamma'| \equiv 1$ satisfies (where $n(\gamma(x)) = \gamma'(x)^\perp$)
\begin{equation}\label{eq:absvaluewillmore}
  \int_{B_{R_0}} \brac{\int_{B_{R_0}} \frac{|\langle n(\gamma(y_),\gamma(y)-\gamma(x)\rangle|}{|\gamma(x)-\gamma(y)|^{2+s}}\, dy}^{\frac{1}{s}}\, dx< \eps
\end{equation}
then for all $B_r(x_0) \subset B_{R_0}$
\begin{equation}\label{eq:goalVMO}
\begin{split}  
&\brac{\mvint_{B_r(x_0)}  \mvint_{B_r(x_0)}|\gamma'(x)-\gamma'(y)|^p\, dx\, dy}^{\frac{1}{p}} \\
&\aleq_{p} \brac{\int_{B_{R_0}} \brac{\int_{B_{R_0}} \frac{|\langle n(\gamma(y)),\gamma(y)-\gamma(x)\rangle|}{|\gamma(x)-\gamma(y)|^{2+s}} \,dy}^{\frac{1}{s}}\, dx}^s.
\end{split}
\end{equation}
\end{proposition}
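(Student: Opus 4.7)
Since $|\gamma'|\equiv 1$ and $\gamma'$ is continuous on $B_{R_0}$, pick a continuous lift $\theta\colon B_{R_0}\to\R$ with $\gamma'(t)=(\cos\theta(t),\sin\theta(t))$. A direct product-to-sum expansion then gives
\[
\langle n(\gamma(y)),\gamma(y)-\gamma(x)\rangle \;=\; \int_x^y \sin(\theta(y)-\theta(t))\,dt,
\]
and using $|\gamma(y)-\gamma(x)|\le |y-x|$ (since $|\gamma'|=1$) we obtain the pointwise lower bound
\[
\frac{|\langle n(\gamma(y)),\gamma(y)-\gamma(x)\rangle|}{|\gamma(y)-\gamma(x)|^{2+s}} \;\ge\; \frac{\bigl|\int_x^y \sin(\theta(y)-\theta(t))\,dt\bigr|}{|y-x|^{2+s}}.
\]
The plan is to exploit this lower bound, together with the smallness of $\eps$, to linearize the sine, reduce to a Poincar\'e-type estimate for $\theta$, and finally transfer BMO control on $\theta$ to BMO control on $\gamma'$.

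\textbf{Smallness bootstrap.} The first real step is to show that for $\eps$ sufficiently small the oscillation of $\theta$ on $B_{R_0}$ is itself small -- say, bounded by $\pi/4$. I would argue this by contradiction: if $\osc\theta$ were not small, one could locate a positive-measure set of pairs $(x,y)$ for which $|\theta(y)-(\theta)_{[x,y]}|$ is bounded below, and using the lower bound above (with $\sin$ genuinely nonvanishing on a fixed arc) this forces the absolute-value Willmore energy above a threshold, contradicting \eqref{eq:absvaluewillmore}. Once $\osc_{B_{R_0}}\theta$ is small, we get $|\sin u|\sim |u|$ for every difference $u=\theta(y)-\theta(t)$ that appears, and similarly $|\gamma'(x)-\gamma'(y)|=2|\sin(\tfrac{\theta(x)-\theta(y)}{2})|\sim |\theta(x)-\theta(y)|$, so controlling the $L^p$-oscillation of $\gamma'$ is equivalent to controlling the $L^p$-oscillation of $\theta$.

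\textbf{Linearization and Poincar\'e.} In the small-oscillation regime, replace $\sin(\theta(y)-\theta(t))$ by $\theta(y)-\theta(t)$; the resulting cubic error is absorbable since the factor $|\theta(y)-\theta(t)|^2$ is small by the previous step. Using the identity $\int_x^y(\theta(y)-\theta(t))\,dt = (y-x)\bigl(\theta(y)-(\theta)_{[x,y]}\bigr)$ we obtain
\[
H^{\mathrm{abs}}(x) \;\gtrsim\; \int_{B_{R_0}}\frac{|\theta(y)-(\theta)_{[x,y]}|}{|y-x|^{1+s}}\,dy \;-\; (\text{absorbable errors}).
\]
Taking $L^{1/s}$ in $x$, a Fubini argument combined with a fractional Poincar\'e inequality in the spirit of \Cref{la:poinc} delivers a Besov-type seminorm bound on $\theta$ over each ball $B_r(x_0)\subset B_{R_0}$; this bound in particular dominates the left-hand side of \eqref{eq:goalVMO}. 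Translating back through $|\gamma'(x)-\gamma'(y)|\sim|\theta(x)-\theta(y)|$ closes the argument with the constant depending only on $p$ and $s$.

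\textbf{Main obstacle.} The delicate point is the smallness bootstrap: making quantitative the claim that smallness of $\eps$ in \eqref{eq:absvaluewillmore} truly forces smallness of $\osc_{B_{R_0}}\theta$. Without this, the sine nonlinearity prevents linearization and neither the Poincar\'e step nor the passage to $\gamma'$ is legal. A secondary, but nontrivial, technicality is the discrepancy $|y-x|-|\gamma(x)-\gamma(y)|$ itself, which is controlled by a lower-order average of $|\gamma'(x)-\gamma'(y)|^2$ and therefore couples back into the quantity being estimated -- forcing an iterative or continuity scheme rather than a single clean estimate.
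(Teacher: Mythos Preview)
Your strategy diverges from the paper's precisely at the point you flag as the main obstacle. The paper never introduces the angle $\theta$ and never tries to obtain $L^\infty$-smallness of the oscillation up front; it works directly with $\gamma'$ and obtains BMO control via a self-improving quadratic inequality. Concretely, since $n(\gamma(y)),\gamma'(y)$ form an orthonormal frame, one writes
\[
|\gamma'(x)-\gamma'(y)|\,|x-y| \;\aleq\; |\langle n(\gamma(y)),\gamma(y)-\gamma(x)\rangle| + |\langle n(\gamma(x)),\gamma(x)-\gamma(y)\rangle| + (\text{tangential pieces}),
\]
and the tangential pieces, thanks to $\langle\gamma'(y),\gamma'(y)-\gamma'(z)\rangle=\tfrac12|\gamma'(y)-\gamma'(z)|^2$, are exactly $\int_{[x,y]}|\gamma'(y)-\gamma'(z)|^2\,dz$ and its symmetric counterpart --- quadratic in the oscillation. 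Averaging over $B_r(x_0)\subset B_R$, the normal terms are bounded by the energy (H\"older and $|\gamma(x)-\gamma(y)|\le|x-y|$), while the quadratic terms are bounded by the square of the BMO quantity via \Cref{la:meanvalueest} and John--Nirenberg, \Cref{la:johnirenberglocal}. This gives
\[
F(R)\;\aleq\; F(R)^2 + (\text{energy on }B_R)^s,\qquad F(R):=\sup_{B_r(x_0)\subset B_R}\mvint_{B_r(x_0)}\mvint_{B_r(x_0)}|\gamma'(x)-\gamma'(y)|\,dx\,dy.
\]
Because $\gamma\in C^1$, one has $F(R)\to 0$ as $R\to 0$ and $R\mapsto F(R)$ is continuous; a continuity argument then forces $F(R_0)\aleq(\text{energy})^s$ once the energy is below a fixed threshold, and John--Nirenberg upgrades this to any $L^p$ oscillation.

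Your contradiction sketch for the bootstrap has a genuine gap: even when $\osc\theta$ is large, the integrand $\bigl|\int_x^y\sin(\theta(y)-\theta(t))\,dt\bigr|$ can be small by cancellation once $\theta(y)-\theta(t)$ ranges beyond $[0,\pi]$, so the phrase ``$\sin$ genuinely nonvanishing on a fixed arc'' already presupposes the smallness you are trying to establish. The observation you label as secondary --- that coupled errors force an iterative or continuity scheme --- is in fact the entire engine of the paper's proof; but it is run on the BMO quantity $F(R)$ rather than on $\osc\theta$, and it is the \emph{quadratic} structure of the tangential error (not a linearization of $\sin$) that makes the inequality $F\aleq F^2+E$ close. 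Your appeal to \Cref{la:poinc} for the Poincar\'e step is also off-target: that lemma concerns the $W^{t,1/t}_2$ seminorm and plays no role here; the passage from $L^1$-oscillation to $L^p$-oscillation is handled entirely by John--Nirenberg.
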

Let us point out a subtlety in \Cref{pr:VMOcontrol}. There is no convexity assumption. However, only if $\gamma$ is a convex curve $\langle n(\gamma(y)),\gamma(y)-\gamma(x)\rangle$ has a sign, and the absolute value inside the integral can be ignored -- that is for convex curves \eqref{eq:absvaluewillmore} is indeed the Willmore energy restricted to a ball. 

In order to prove Proposition~\ref{pr:VMOcontrol}, we need the following local John-Nirenberg type result, see~\cite[Corollary 6.22]{MR3099262}.
\begin{lemma}\label{la:johnirenberglocal}
For any $u \in L^\infty(B_1)$ and any $p \in [1,\infty)$ we have
\begin{equation}
\begin{split}
\sup_{B_r(x_0) \subset B_1} &\brac{\mvint_{B_r(x_0)} \mvint_{B_r(x_0)}|u(x)-u(y)|^p\, dx\, dy}^{\frac{1}{p}} \\
&\aleq_p \sup_{B_r(x_0) \subset B_1}\mvint_{B_r(x_0)} \mvint_{B_r(x_0)}|u(x)-u(y)|\, dx\, dy,
\end{split}
\end{equation}
whenever the right-hand side is finite.
\end{lemma}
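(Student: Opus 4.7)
The plan is to reduce the claim to the classical local John--Nirenberg inequality. First, I would observe that the right-hand side is, up to a factor of $2$, the local mean-oscillation seminorm
$$
\|u\|_{\ast, B_1} := \sup_{B_r(x_0) \subset B_1} \mvint_{B_r(x_0)} |u - (u)_{B_r(x_0)}|\, dx, \qquad (u)_B := \mvint_B u,
$$
since for every ball $B = B_r(x_0) \subset B_1$ the triangle inequality gives
$$
\mvint_B |u - (u)_B|\, dx \leq \mvint_B \mvint_B |u(x) - u(y)|\, dx\, dy \leq 2 \mvint_B |u - (u)_B|\, dx.
$$
The same elementary estimate with exponent $p$ shows that the left-hand side is comparable (up to a factor $2^p$) to the quantity $\sup_{B \subset B_1} \brac{\mvint_B |u - (u)_B|^p\, dx}^{1/p}$.

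With both sides rewritten in terms of mean oscillations, the claim reduces to
$$
\sup_{B \subset B_1} \brac{\mvint_B |u - (u)_B|^p \, dx}^{\frac{1}{p}} \aleq_p \|u\|_{\ast, B_1},
$$
which is exactly the local John--Nirenberg inequality, available in the form cited as \cite[Corollary~6.22]{MR3099262}. That reference supplies the exponential decay
$$
\bigl|\{x \in B : |u - (u)_B| > \lambda\}\bigr| \leq C_1 |B| \exp\brac{-C_2 \lambda/\|u\|_{\ast, B_1}}
$$
for every ball $B \subset B_1$, obtained by the standard Calder\'on--Zygmund stopping-time construction carried out inside the fixed reference ball $B_1$. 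Integrating this via the layer-cake representation then yields the desired $L^p$ bound with a constant depending on $p$ but independent of $u$.

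There is no substantive obstacle: the proof is merely the combination of a two-line triangle-inequality comparison and a direct appeal to the local John--Nirenberg theorem. The hypotheses $u \in L^\infty(B_1)$ and the finiteness of the right-hand side are invoked only to ensure that every average, oscillation, and seminorm in play is well defined, so that the quantitative estimate is meaningful. The one point requiring mild care is simply to verify that the version of John--Nirenberg being cited is stated with the \emph{local} seminorm restricted to balls contained in $B_1$, matching the sup structure of both sides of the claim.
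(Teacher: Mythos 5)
Your proposal is correct and is exactly what the paper intends: the paper gives no proof of this lemma at all, relying precisely on the cited local John--Nirenberg result \cite[Corollary 6.22]{MR3099262}, and your two-sided triangle-inequality comparison between the double-average quantity $\mvint_B\mvint_B|u(x)-u(y)|\,dx\,dy$ and the mean oscillation $\mvint_B|u-(u)_B|\,dx$ (with the analogous comparison at exponent $p$) is the standard, correct reduction. Nothing further is needed.
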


Lastly we need a certain mean valued estimate for double intergrals.
\begin{lemma}\label{la:meanvalueest}
For any $p > 1$
\[
 \mvint_{B(x_0,r)} \mvint_{B(x_0,r)} \mvint_{[x,y]} |U(y,z)|\, dz\, dx\, dy \aleq_p \brac{\mvint_{B(x_0,r)} \mvint_{B(x_0,r)} |U(x,y)|^p dx\, dy }^{\frac{1}{p}}
\]

\end{lemma}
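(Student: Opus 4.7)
The plan is to unfold the averages, apply Fubini to swap the order of integration, and then use Hölder's inequality against a logarithmic kernel. Write $B := B(x_0,r)$. Expanding the $\mvint$'s and pulling out $|U(y,z)|$ from the $x$-integral, we rewrite
\[
 \mvint_{B} \mvint_{B} \mvint_{[x,y]} |U(y,z)|\, dz\, dx\, dy = \frac{1}{|B|^2}\int_{B} \int_{B} |U(y,z)|\, K(y,z)\, dy\, dz,
\]
where
\[
 K(y,z) := \int_{B} \frac{\chi_{z \in [x,y]}}{|x-y|}\, dx.
\]

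Next I would bound the kernel. Substituting $w = x-y$, the requirement $z \in [x,y]$ forces $|w| \geq |y-z|$, while $x \in B$ forces $|w| \lesssim |B|$, so
\[
 K(y,z) \leq \int_{|y-z| \leq |w| \lesssim |B|} \frac{dw}{|w|} \aleq \log\frac{|B|}{|y-z|}.
\]

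Now apply Hölder's inequality in the $(y,z)$ integral with exponents $p$ and $p'$ (this is where $p>1$ enters) to separate $|U|$ from the kernel:
\[
 \mvint_{B} \mvint_{B} \mvint_{[x,y]} |U(y,z)|\, dz\, dx\, dy \leq \frac{1}{|B|^2}\left(\int_{B}\int_{B} |U|^p\right)^{\frac{1}{p}}\left(\int_{B}\int_{B} K^{p'}\right)^{\frac{1}{p'}}.
\]
Since $\int_{B}\int_{B} \log^{p'}\brac{|B|/|y-z|}\, dy\, dz \aleq_p |B|^2$ (the logarithm raised to any finite power is integrable near the diagonal), the second factor is $\aleq_p |B|^{2/p'}$. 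Recombining the $|B|$-powers yields exactly $\brac{\mvint_B \mvint_B |U|^p}^{1/p}$, which is the desired bound.

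The only subtle point is the logarithmic divergence of $K$ at $y=z$; this is harmless under $p>1$ because it is absorbed by raising $K$ to the finite power $p'$, but it is precisely the reason the statement would fail at $p=1$. Everything else is a straightforward application of Fubini and Hölder, so I do not expect any serious obstacle beyond organizing the integration order correctly.
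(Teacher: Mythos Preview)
Your proof is correct and follows essentially the same approach as the paper: Fubini to integrate out $x$ and produce a logarithmic kernel in $(y,z)$, then H\"older against that kernel using $p>1$. The paper carries out the same computation after normalizing to $B=(-1,1)$, splitting into the cases $x<y$ and $x>y$ to obtain the explicit kernels $\log\brac{|1+y|/|z-y|}$ and $\log\brac{|1-y|/|z-y|}$; your uniform bound $K(y,z)\aleq \log\brac{|B|/|y-z|}$ is a slight (and harmless) relaxation of these.
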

\begin{proof}
By translation and scaling, w.l.o.g. $x_0 = 0$ and $r =1$, i.e. we need to show
\[
 \int_{-1}^1 \int_{-1}^1 \mvint_{[x,y]} |U(y,z)|\, dz\, dx\, dy \aleq_p \brac{\int_{-1}^1 \int_{-1}^1 |U(x,y)|^p dx\, dy }^{\frac{1}{p}}.
\]
We have 
\begin{equation*}
\begin{split}
&\int_{-1}^1 \int_{-1}^1 \mvint_{[x,y]} |U(y,z)|\, dz\, dx\, dy\\
=&\int_{-1}^1 \int_{-1}^y \int_{-1}^z\frac{1}{y-x}\left|U(y,z)\right|\,dx\,dz\,dy+\int_{-1}^1 \int_{y}^1 \int_{z}^1\frac{1}{x-y}\left|U(y,z)\right|\,dx\,dz\,dy\\
=&\int_{-1}^1 \int_{-1}^y \left|U(y,z)\right|\log\left(\frac{\left|1+y\right|}{\left|z-y\right|}\right)\,dz\,dy+\int_{-1}^1 \int_{y}^1 \log\left(\frac{\left|1-y\right|}{\left|z-y\right|}\right)\left|U(y,z)\right|\,dz\,dy\\
\leq &\left(\int_{-1}^1 \int_{-1}^1 \left|U(y,z)\right|^p\,dz\,dy\right)^\frac{1}{p}\left(\int_{-1}^1 \int_{-1}^y\left(\log\left(\frac{\left|1+y\right|}{\left|z-y\right|}\right)\right)^{p'}\,dz\,dy\right)^\frac{1}{p'}\\
&+\left(\int_{-1}^1 \int_{-1}^1 \left|U(y,z)\right|^p\,dz\,dy\right)^\frac{1}{p}\left(\int_{-1}^1 \int_{y}^1\left(\log\left(\frac{\left|1-y\right|}{\left|z-y\right|}\right)\right)^{p'}\,dz\,dy\right)^\frac{1}{p'}.
\end{split}
\end{equation*}
Now we use that for any $\gamma > 0$ there exists $C(\gamma)$ such that
\[
 \abs{\log a} \leq C(\gamma) \brac{a^\gamma + a^{-\gamma}} \quad \forall a \in (0,\infty).
\]
Thus, we have for some small $\delta < 1$ (and a suitable choice of $\gamma$)
\begin{equation*}
\begin{split}
&\int_{-1}^1 \int_{-1}^y \brac{\log\brac{\frac{y+1}{\left|z-y\right|}}}^{p'}\, dz\, dy\\
\aleq &\int_{-1}^1 \int_{-1}^y \frac{(y+1)^\delta}{\left|y-z\right|^\delta}\, dz\, dy+\int_{-1}^1 \int_{-1}^y \frac{\left|y-z\right|^\delta}{(y+1)^\delta}\, dz\, dy\\
\leq & 2^\delta\int_{-1}^1 \int_{-1}^{y} \left|y-z\right|^{-\delta}\, dz\, dy+2^\delta\int_{-1}^1 \int_{-1}^{y} (y+1)^{-\delta}\, dz\, dy\\
 =&2^\delta\int_{-1}^1 \frac{1}{1-\delta} (y+1)^{1-\delta}\, dy+2^\delta\int_{-1}^1 (y+1)^{1-\delta}\, dy\\
 =&C(\delta).
\end{split}
\end{equation*}
Analogously, it holds that 
\begin{equation*}
\int_{-1}^1 \int_{y}^1\left(\log\left(\frac{\left|1-y\right|}{\left|z-y\right|}\right)\right)^{p'}\,dz\,dy\aleq C^*(\delta).\qedhere
\end{equation*}
We can conclude.
\end{proof}

\begin{proof}[Proof of \Cref{pr:VMOcontrol}]
Since $|\gamma'| \equiv 1$ we have 
\[
\langle \gamma'(x), \gamma'(x)-\gamma'(z)\rangle = \frac{1}{2} |\gamma'(x)-\gamma'(z)|^2.
\]
With the fundamental theorem of calculus we then have 
\[
\langle \gamma'(y), \gamma(x)-\gamma(y)-\gamma'(y)(x-y) \rangle = \int_{y}^x \langle \gamma'(y), \gamma'(z)-\gamma'(y)\rangle = \frac{1}{2}\int_{y}^x |\gamma'(x)-\gamma'(y)|^2.
\]
Using this and the fact that $n(\gamma(y)),\gamma'(y)$ forms an orthonormal basis of $\R^2$,
\[
\begin{split}
 |\gamma'(x)-\gamma'(y)| =& \frac{|\gamma'(x)(x-y)-\gamma'(y)(x-y)|}{|x-y|}\\
 \leq& \frac{|\gamma(x)-\gamma(y)-\gamma'(y)(y-x)|}{|x-y|} + \frac{|\gamma(y)-\gamma(x)-\gamma'(x)(x-y)|}{|x-y|}\\
 \aleq& \frac{|\langle n(y), \gamma(x)-\gamma(y)-\gamma'(y)(x-y)\rangle |}{|x-y|} + \frac{|\langle n(x),\gamma(y)-\gamma(x)-\gamma'(x)(y-x)\rangle|}{|x-y|}\\
 &+\frac{|\langle \gamma'(y), \gamma(x)-\gamma(y)-\gamma'(y)(x-y)\rangle |}{|x-y|} + \frac{|\langle \gamma'(x),\gamma(y)-\gamma(x)-\gamma'(x)(y-x)\rangle|}{|x-y|}\\
 \aleq&\frac{|\langle n(\gamma(y)), \gamma(x)-\gamma(y)-\gamma'(y)(x-y)\rangle |}{|x-y|} + \frac{|\langle n(\gamma(x)),\gamma(y)-\gamma(x)-\gamma'(x)(y-x)\rangle|}{|x-y|}\\
 &+\mvint_{[x,y]} |\gamma'(y)-\gamma'(z)|^2\, dz + \mvint_{[x,y]} |\gamma'(z)-\gamma'(x)|^2\, dz\\
  =&\frac{|\langle n(\gamma(y)), \gamma(x)-\gamma(y)\rangle |}{|x-y|} + \frac{|\langle n(\gamma(x)),\gamma(y)-\gamma(x)\rangle|}{|x-y|}\\
 &+\mvint_{[x,y]} |\gamma'(y)-\gamma'(z)|^2\, dz + \mvint_{[x,y]} |\gamma'(z)-\gamma'(x)|^2\, dz.
\end{split}
 \]
We now observe that whenever $B_r(x_0) \subset B_R$ (we use that $r\ageq |x-y| \geq |\gamma(x)-\gamma(y)|$ since $|\gamma'| \equiv 1$)
\[
\begin{split}
 &\mvint_{B_r(x_0)} \mvint_{B_r(x_0)} \frac{|\langle n(\gamma(y)), \gamma(x)-\gamma(y)\rangle |}{|x-y|}\, dx\, dy\\
 \aleq&r^{1+s}\mvint_{B_r(x_0)} \mvint_{B_r(x_0)} \frac{|\langle n(\gamma(y)), \gamma(x)-\gamma(y)\rangle |}{|\gamma(x)-\gamma(y)|^{2+s}}\, dy\, dx\\
  =&r^{s-1}\int_{B_r(x_0)} \int_{B_r(x_0)} \frac{|\langle n(\gamma(y)), \gamma(x)-\gamma(y)\rangle |}{|\gamma(x)-\gamma(y)|^{2+s}}\, dy\, dx\\
  \aleq&\brac{\int_{B_r(x_0)} \brac{\int_{B_r(x_0)} \frac{|\langle n(\gamma(y)), \gamma(x)-\gamma(y)\rangle |}{|\gamma(x)-\gamma(y)|^{2+s}}\, dy}^{\frac{1}{s}}\, dx}^s \\
   \aleq&\brac{\int_{B_R} \brac{\int_{B_R} \frac{|\langle n(\gamma(y)), \gamma(x)-\gamma(y)\rangle |}{|\gamma(x)-\gamma(y)|^{2+s}}\, dy}^{\frac{1}{s}}\, dx}^s \\
 \end{split}
\]
and similarly, using Fubini's Theorem, we obtain
\[
\begin{split}
 &\mvint_{B_r(x_0)} \mvint_{B_r(x_0)} \frac{|\langle n(\gamma(x)),\gamma(y)-\gamma(x)\rangle|}{|x-y|}\, dx\, dy\\
   \aleq&\brac{\int_{B_R} \brac{\int_{B_R} \frac{|\langle n(\gamma(y)), \gamma(x)-\gamma(y)\rangle |}{|\gamma(x)-\gamma(y)|^{2+s}}\, dy}^{\frac{1}{s}}\, dx}^s.
 \end{split}
\]
Moreover, by \Cref{la:meanvalueest}, for any $p > 1$
\begin{equation*}
\mvint_{B_r(x_0)} \mvint_{B_r(x_0)}\mvint_{[x,y]} |\gamma'(y)-\gamma'(z)|^2\, dz\, dx\, dy \aleq \brac{\mvint_{B_r(x_0)} \mvint_{B_r(x_0)}|\gamma'(x)-\gamma'(y)|^{2p}\, dx\, dy}^{\frac{1}{p}}
\end{equation*}
Combining the above estimates we find
\begin{equation}\label{eq:ineq}
\begin{split}
 &\sup_{B_r(x_0) \subset B_1}\mvint_{B_r(x_0)} \mvint_{B_r(x_0)}|\gamma'(x)-\gamma'(y)|\, dx\, dy\\
 \aleq & \sup_{B_r(x_0) \subset B_1}\brac{\mvint_{B_r(x_0)} \mvint_{B_r(x_0)}|\gamma'(x)-\gamma'(y)|^{2p}\, dx\, dy}^{\frac{1}{p}} \\
 & + \brac{\int_{B_R} \brac{\int_{B_R} \frac{|\langle n(\gamma(y)), \gamma(x)-\gamma(y)\rangle |}{|\gamma(x)-\gamma(y)|^{2+s}}\, dy}^{\frac{1}{s}}\, dx}^s.
 \end{split}
\end{equation}

Applying \Cref{la:johnirenberglocal} we find
\begin{equation}\label{eq:ineq2}
\begin{split}
 &\sup_{B_r(x_0) \subset B_1}\mvint_{B_r(x_0)} \mvint_{B_r(x_0)}|\gamma'(x)-\gamma'(y)|\, dx\, dy\\
 \aleq& \sup_{B_r(x_0) \subset B_1}\brac{\mvint_{B_r(x_0)} \mvint_{B_r(x_0)}|\gamma'(x)-\gamma'(y)|\, dx\, dy}^{2} \\
 & + \brac{\int_{B_R} \brac{\int_{B_R} \frac{|\langle n(\gamma(y)), \gamma(x)-\gamma(y)\rangle |}{|\gamma(x)-\gamma(y)|^{2+s}}\, dy}^{\frac{1}{s}}\, dx}^s.
 \end{split}
\end{equation}

Now set
\[
 F(R) := \sup_{B_r(x_0) \subset B_R}\mvint_{B_r(x_0)} \mvint_{B_r(x_0)}|\gamma'(x)-\gamma'(y)|\, dx\, dy.
\]
Then \eqref{eq:ineq2} become
\begin{equation}\label{eq:inequ3}
 F(R) \leq C F(R)^2 + \brac{\int_{B_R} \brac{\int_{B_R} \frac{|\langle n(\gamma(y)), \gamma(x)-\gamma(y)\rangle |}{|\gamma(x)-\gamma(y)|^{2+s}}\, dy}^{\frac{1}{s}}\, dx}^s.
\end{equation}

As for properties of $F(R)$, it is clearly increasing function. Moreover, since $\gamma \in C^1$ we have
\[
 \lim_{R \to 0} F(R) = 0.
\]
Lastly, $R \mapsto F(R)$ is continuous: {Fix any $\eps > 0$ and $R > 0$. Since $\gamma'$ is continuous, there exists an $r_1$ such that we have $\mvint_{B_r(x)} \mvint_{B_r(x)}|\gamma'(x)-\gamma'(y)|\, dx\, dy < \frac{\eps}{4}$ whenever $r \in (0,r_1)$ and $x \in B_{2R}$. So if $|R-R'| \leq \frac{r_1}{100}$ and $B_r(x_0) \subset B_R$ with $r \geq r_1$ then we can find $\tilde{x}_0 \in B_{R'}$ and $\tilde{r}$ with $B_{\tilde{r}}(\tilde{x}_0) \subset B_{R'}$ such that 
\[
 |\tilde{x}_0-x_0| \aleq_{r_1} |R-R'|, \quad \text{and} \quad |r-\tilde{r}| \aleq_{r_1} |R-R'|.
\]
The same holds if $B_r(x_0) \subset B_{R'}$ and $r \geq r'$ then we find a ball $B_{\tilde{r}}(\tilde{x}_0) \subset B_R$ with the above estimates. So if we choose $|R-R'| \ll 1$, we have in these cases by continuity of $\gamma'$
\[
 \abs{\mvint_{B_r(x_0)} \mvint_{B_r(x_0)}|\gamma'(x)-\gamma'(y)|\, dx\, dy-\mvint_{B_{\tilde{r}}(\tilde{x}_0)} \mvint_{B_{\tilde{r}}(\tilde{x}_0)}|\gamma'(x)-\gamma'(y)|\, dx\, dy} \leq \frac{\eps}{4}.
\]
Thus,
\[
 |F(R)-F(R')| \leq \frac{\eps}{4} + \frac{\eps}{4}  + \frac{\eps}{4} < \eps.
\]
}

These properties combined with \eqref{eq:inequ3} and a a continuity argument imply that there is some $\delta>0$ (depending on $C$) such that if
\[
 \brac{\int_{B_R} \brac{\int_{B_R} \frac{|\langle n(\gamma(y)), \gamma(x)-\gamma(y)\rangle |}{|\gamma(x)-\gamma(y)|^{2+s}}\, dy}^{\frac{1}{s}}\, dx}^s < \delta
\]
then
\[
 F(R) \leq C_2 \brac{\int_{B_R} \brac{\int_{B_R} \frac{|\langle n(\gamma(y)), \gamma(x)-\gamma(y)\rangle |}{|\gamma(x)-\gamma(y)|^{2+s}}\, dy}^{\frac{1}{s}}\, dx}^s.
\]
This, combined with \Cref{la:johnirenberglocal}, implies \eqref{eq:goalVMO}.
\end{proof}

\subsection{Consequence of VMO-control: BiLipschitz-control and graph}\label{ConVMO}
\begin{lemma}[Bilipschitz control]
\label{la:bilipvmo}
For any $\delta > 0$ there exists $\epsilon_0> 0$ such that the following holds:

Assume that $\gamma\in Lip(B_{R_0},\R^2)$, $|\gamma'| \equiv 1$ a.e. and it satisfies
\begin{equation*}
 \sup_{B_r(x_0) \subset B_{R_0}} \mvint_{B_r(x_0)} \mvint_{B_r(x_0)}|\gamma'(x)-\gamma'(y)|\, dx\, dy < \epsilon_0
 \end{equation*}
then
\begin{equation*}
 1-\delta \leq \frac{\abs{\gamma(x)-\gamma(y)}}{|x-y|} \leq 1 \quad \mbox{for all } x,y \in B_{R_0}.
 \end{equation*}
\end{lemma}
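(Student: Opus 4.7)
The plan is to combine two ingredients: a simple algebraic identity relating chord length and arclength for a unit-speed curve, and the $L^p$-upgrade of the small BMO hypothesis provided by Lemma~\ref{la:johnirenberglocal}. The upper bound is immediate: since $|\gamma'|\equiv 1$ a.e., the triangle inequality gives $|\gamma(x)-\gamma(y)| \leq \int_{[x,y]}|\gamma'(z)|\,dz = |x-y|$, with no smallness needed.

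For the lower bound, observe that for any $x,y \in B_{R_0}$ one has the exact identity
\[
|x-y|^2 - |\gamma(x)-\gamma(y)|^2 = \int_{[x,y]}\int_{[x,y]} \brac{1-\gamma'(z_1)\cdot \gamma'(z_2)}\, dz_1\, dz_2 = \frac{1}{2}\int_{[x,y]}\int_{[x,y]} |\gamma'(z_1)-\gamma'(z_2)|^2\, dz_1\, dz_2,
\]
where in the last step I use that $|\gamma'|\equiv 1$ a.e. gives $1-\gamma'(z_1)\cdot\gamma'(z_2) = \tfrac{1}{2}|\gamma'(z_1)-\gamma'(z_2)|^2$. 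Dividing through by $|x-y|^2$, the problem reduces to bounding the mean of $|\gamma'(z_1)-\gamma'(z_2)|^2$ over $[x,y]\times [x,y]$ in terms of $\epsilon_0$.

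Here is where the hypothesis enters. Every interval $[x,y]\subset B_{R_0}$ is itself a ball in $\R$ (centered at $(x+y)/2$ of radius $|x-y|/2$), so the hypothesis yields, by Lemma~\ref{la:johnirenberglocal} with $p=2$,
\[
\brac{\mvint_{[x,y]}\mvint_{[x,y]}|\gamma'(z_1)-\gamma'(z_2)|^2\, dz_1\, dz_2}^{1/2} \aleq \sup_{B_r(x_0) \subset B_{R_0}} \mvint_{B_r(x_0)}\mvint_{B_r(x_0)}|\gamma'(z_1)-\gamma'(z_2)|\, dz_1\, dz_2 < C\,\epsilon_0.
\]
Plugging this into the identity above yields $|\gamma(x)-\gamma(y)|^2 \geq (1 - \tfrac{C^2}{2}\epsilon_0^2)\,|x-y|^2$, so choosing $\epsilon_0$ small enough that $1 - \tfrac{C^2}{2}\epsilon_0^2 \geq (1-\delta)^2$, i.e. $\epsilon_0 \aleq \sqrt{\delta}$, gives the desired lower bound.

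There is no real obstacle here; the only thing to keep track of is that in one dimension the balls in the John--Nirenberg statement coincide with the intervals $[x,y]$ appearing in the identity, so the supremum in the hypothesis controls exactly the quantity one needs, and Lemma~\ref{la:johnirenberglocal} is exactly the tool to promote the $L^1$ smallness assumption to the $L^2$ smallness required by the algebraic identity.
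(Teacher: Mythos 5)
Your proof is correct and follows essentially the same route as the paper: the identity $|x-y|^2-|\gamma(x)-\gamma(y)|^2=\tfrac{1}{2}\int_{[x,y]}\int_{[x,y]}|\gamma'(z_1)-\gamma'(z_2)|^2$ followed by the John--Nirenberg upgrade of Lemma~\ref{la:johnirenberglocal} with $p=2$. (In fact you are slightly more careful than the printed proof, which drops a minus sign in the identity and elides the quadratic scaling $\epsilon_0\aleq\sqrt{\delta}$ that you make explicit.)
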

\begin{proof}
We have
\[
\begin{split}
 |\gamma(y)-\gamma(x)|^2 - |x-y|^2 =& \int_x^y\int_x^y\brac{ \langle \gamma'(z_1),\gamma'(z_2) \rangle -1}\, dz_1\, dz_2\\
=& \frac{1}{2} \int_x^y\int_x^y\abs{\gamma'(z_1)-\gamma'(z_2)}^2\, dz_1\, dz_2.\\
 \end{split}
 \]
In view of \Cref{la:johnirenberglocal} and the assumption we then have
\[
 \abs{\frac{|\gamma(y)-\gamma(x)|^2}{|x-y|^2} - 1} \leq \frac{\epsilon_0}{2}.
\]
This readily implies the claim.
\end{proof}

\begin{lemma}\label{blatt:bilip}
Assume that $\gamma \in C^1(B_{R_0},\R^2)$, $\left|\gamma'\right|\equiv 1$ and $\gamma$ is a convex curve in the sense of \eqref{fcdghte}. Then, for each $\tau\in B_{\frac{R_0}{4}}$ there exists some $\delta\in (0,1)$ such that if
\begin{equation}\label{gb'kouno}
 1-\delta \leq \frac{\abs{\gamma(x)-\gamma(y)}}{|x-y|} \leq 1 \quad \mbox{for all}\quad x,y\in  B_{R_0},
\end{equation}
then
\begin{equation}\label{res.ook8}
 \gamma\left(B_{\frac{R_0}{4}}(\tau)\right) \quad \text{is a graph over $\left\lbrace t\gamma'(\tau)+\gamma(\tau)|\,t\in\R \right\rbrace$},
\end{equation}
which is either convex or concave.
\end{lemma}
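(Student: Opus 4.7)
My plan is to argue by contradiction: the failure of the graph property, combined with Rolle's theorem, forces a ``quarter-turn'' of the tangent inside $B_{R_0/4}(\tau)$, and this will be incompatible with convexity plus the chord bound~\eqref{gb'kouno} applied over the whole sub-interval $B_{R_0}$. After translating $\tau$ to the origin and rotating coordinates so that $\gamma(\tau)=0$ and $\gamma'(\tau)=e_1$, I write $\gamma'(s)=(\cos\theta(s),\sin\theta(s))$ with $\theta$ continuous and monotone on $B_{R_0}$ by convexity. The graph property over $L_\tau=\{t\,e_1:t\in\R\}$ amounts to the strict monotonicity of $\pi(z):=\int_\tau^z\cos\theta(s)\,ds$ on $B_{R_0/4}(\tau)$. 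Since $\pi'(\tau)=1$ and $\pi'$ is continuous, if $\pi$ fails to be injective then Rolle's theorem yields $z_\ast\in B_{R_0/4}(\tau)$ with $\pi'(z_\ast)=\cos\theta(z_\ast)=0$, i.e.\ $\gamma'(z_\ast)=\pm e_2$. By symmetry I treat the case $\gamma'(z_\ast)=e_2$.

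Next I apply the convexity support inequality~\eqref{fcdghte} at both $\tau$ and $z_\ast$. At $\tau$ the supporting unit normal is perpendicular to $e_1$, hence $\pm e_2$; fixing orientation gives $\gamma(B_{R_0})\subset\{y\geq 0\}$. At $z_\ast$, by monotonicity of $\theta$ (so that the rotation of the outward normal is consistent), the supporting normal is $e_1$, giving $\gamma(B_{R_0})\subset\{x\leq\gamma(z_\ast)_1\}$. Since $|\gamma'|\equiv 1$, the arc-length bound $|\gamma(z_\ast)-\gamma(\tau)|\leq|z_\ast-\tau|<R_0/4$ implies $\gamma(z_\ast)_1\leq R_0/4$, so $\gamma(B_{R_0})\subset K:=\{(x,y):x\leq R_0/4,\,y\geq 0\}$.

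The contradiction now comes from the longest available chord. Because $|\tau|<R_0/4$, both points $\tau\pm\tfrac{3R_0}{4}$ belong to $B_{R_0}$; arc-length further places their images in $\overline{B_{3R_0/4}(0)}$, so both lie in the compact region $K\cap\overline{B_{3R_0/4}(0)}$, whose diameter is an elementary calculation: it equals $\tfrac{\sqrt{6}}{2}R_0$, attained between $(-\tfrac{3R_0}{4},0)$ and $(\tfrac{R_0}{4},\tfrac{\sqrt{2}}{2}R_0)$. On the other hand, \eqref{gb'kouno} forces $|\gamma(\tau+\tfrac{3R_0}{4})-\gamma(\tau-\tfrac{3R_0}{4})|\geq(1-\delta)\tfrac{3R_0}{2}$, which strictly exceeds $\tfrac{\sqrt{6}}{2}R_0$ as soon as $\delta<1-\tfrac{\sqrt{6}}{3}$ — the desired contradiction. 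The ``convex or concave'' assertion for the resulting graph $y=f(x)$ is then immediate from $\gamma$ lying on one side of each of its tangent lines: this translates to $f(x)\geq f(x_0)+f'(x_0)(x-x_0)$ (or the reverse, depending on the orientation fixed earlier), which is convexity (respectively concavity) of $f$.

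The main obstacle is conceptual rather than computational: one must spot that applying the convexity support inequality \emph{simultaneously} at $\tau$ and at the hypothetical quarter-turn $z_\ast$, together with the arc-length bound on $\gamma(z_\ast)_1$, traps $\gamma(B_{R_0})$ into a region whose diameter is strictly smaller than the $(1-\delta)\tfrac{3R_0}{2}$ chord demanded by \eqref{gb'kouno}; the numerical constant $1-\tfrac{\sqrt{6}}{3}\approx 0.184$ is just the threshold making this comparison tight.
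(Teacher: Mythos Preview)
Your proof is correct and follows essentially the same route as the paper: argue by contradiction, normalize so that $\gamma(\tau)=0$ and $\gamma'(\tau)=e_1$, locate a quarter-turn point $z_\ast$ where $\gamma'(z_\ast)=\pm e_2$, apply the convexity support inequality at both $\tau$ and $z_\ast$ to trap $\gamma(B_{R_0})$ in a region of controlled diameter, and contradict the chord lower bound \eqref{gb'kouno}. The paper uses the chord between $\gamma(-R_0)$ and $\gamma(R_0)$ and bounds by the diameter of a rectangle, obtaining the threshold $\delta<1/8$; you use the chord between $\gamma(\tau\pm 3R_0/4)$ and bound via $K\cap\overline{B_{3R_0/4}(0)}$, yielding the slightly sharper $\delta<1-\sqrt{6}/3$. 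One cosmetic point: you invoke monotonicity of $\theta$ to fix the sign of $\nu_{z_\ast}$, but all you actually need is that $\nu_x=\gamma'(x)^\perp$ with a consistent sign along the curve, which follows from Lemma~\ref{la:vxinconvexity} and continuity; the paper instead determines the sign of $\nu_{z_\ast}$ by testing against $\gamma(\tau)$.
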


\begin{proof}
The proof below is based on ideas by S. Blatt, \cite{SimonBlattgraph}.

First we prove that for every $\tau$ as in the statement $\gamma\left(B_{\frac{R_0}{4}}(\tau)\right)$ is a graph. To do so, we proceed by contradiction and  assume that there exists some $\tau\in B_{\frac{R_0}{4}}$ such that $\gamma\left(B_{\frac{R_0}{4}}(\tau)\right)$ is not a graph over $\left\lbrace t\gamma'(\tau)+\gamma(\tau)|\,t\in\R \right\rbrace$.

Now, up to translations and rotations of $\gamma$ we can assume that 
\begin{equation*}
\gamma'(\tau)=(1,0)^T\quad\mbox{and}\quad \gamma(\tau)=0.
\end{equation*}
Also, as consequence of this and Lemma~\ref{la:vxinconvexity}, we obtain that either $\gamma_2(t)\geq 0$ or $\gamma_2(t)\leq 0$ for all $t\in B_{R_0}$. Without loss of generality we assume that 
\begin{equation}\label{possecnor}
\gamma_2(t) \geq 0\quad\mbox{for all}\quad t\in B_{R_0}.   
\end{equation}

Since, by assumption, $\gamma$ is not a graph on $B(x_0,R)$ there must be some $x_0 \in B_{\frac{R_0}{4}}(\tau)$ such that $\gamma'(x_0) = (0,1)^T$ and $\gamma_1(x_0)>0$ (if $x_0 >\tau $) or $\gamma'(x_0) = (0,-1)^T$ and $\gamma_1(x_0)<0$ (if $x_0 < \tau$).

We only consider the case $x_0 > \tau$ and $\gamma'(x_0) = (0,1)^T$, the proof of the other case is analogous. Moreover, without loss of generality we assume that $\tau<0$.

Then, since $\gamma(\tau)=0$  and $|\gamma'|=1$ we obtain that
\begin{equation*}
\left|\gamma (t)\right|\leq\left|t-\tau\right|,
\end{equation*}
from which we evince that
\begin{equation*}
\gamma(B_{R_0}) \subset [\tau-R_0,R_0-\tau]^2.
\end{equation*}
Also, it follows from~\eqref{possecnor} that 
\begin{equation*}
\gamma(B_{R_0}) \subset [\tau-R_0,R_0-\tau] \times [0,R_0-\tau].
\end{equation*}

Now, if $\nu_{x_0}$ is given as in~\eqref{fcdghte}, then as a consequence of Lemma~\ref{la:vxinconvexity} we have that $\nu_{x_0}$ is uniquely defined and either $\nu_{x_0}=(1,0)^T$ or $\nu_{x_0}=(-1,0)^T$. Now, applying~\eqref{fcdghte} in $x_0$ we obtain
\begin{equation*}
0 \leq \langle \nu_{x_0}, \gamma(x_0)-\gamma(\tau)\rangle= \langle \nu_{x_0}, \gamma(x_0) \rangle,   
\end{equation*}
from which it follows that $\nu_{x_0}=n(\gamma(x_0))=(1,0)^T$. Making use of this and~\eqref{fcdghte} a second time we obtain for all $t\in B_{R_0}$ 
\begin{equation*}
\left\langle
\begin{pmatrix}
1\\
0
\end{pmatrix}, \gamma(x_0)-\gamma(t)\right\rangle\geq 0
\end{equation*}
which gives that 
\begin{equation}\label{fnuc32ll}
\gamma_1(t)\leq \gamma_1(x_0)\quad\mbox{for all}\quad t\in B_{R_0}.
\end{equation}
Furthermore, using $\left|\gamma'\right|\equiv 1$ and $\gamma(\tau)=0$, we evince 
\begin{equation*}
\gamma_1(x_0)<\frac{R_0}{4}.
\end{equation*}
From this and~\eqref{fnuc32ll} we obtain 
\begin{equation*}
\gamma(B_{R_0}) \subset \left[\tau-R_0, \frac{R_0}{4}\right] \times \left[0,R_0-\tau\right].
\end{equation*}
In particular, we can compute that
\begin{equation*}
\begin{split}
\left|\gamma(-R_0)-\gamma(R_0)\right| &\leq \diam \brac{\left[\tau-R_0,\frac{R_0}{4}\right] \times[0,R_0-\tau]}\\ 
&= \left|(\tau-R_0,0) - \left(\frac{R_0}{4},R_0-\tau\right)\right| \\
&= \left|\left(\tau-R_0-\frac{R_0}{4},\tau-R_0\right)\right| \\
&\leq \frac{7}{4}R_0
\end{split}
\end{equation*}
Thus, if we pick $\delta < 1-\frac{7}{8}$, we obtain that 
\[
 \frac{|\gamma(-R_0)-\gamma(R_0)|}{2R_0} \leq  \frac{7}{8}< 1-\delta.
\]
This contradicts~\eqref{gb'kouno} and we conclude that $\gamma\left(B_{\frac{R_0}{4}}(\tau)\right)$ is a graph over $\left\lbrace t\gamma'(\tau)+\gamma(\tau)|\,t\in\R \right\rbrace$. The convexity or concavity of such graph follows immediately from Proposition~\ref{convcurvfunc}. 

\end{proof}

\section{Sobolev control}\label{s:sobcontrol}
The goal of this section is show that locally if our curve is convex and the Willmore energy is small we obtain a Sobolev control.
\begin{theorem}\label{th:sobolevcontrol}
For any $s \in (0,1)$ there exists $\eps = \eps(s) > 0$ such that the following holds:

Assume that for $R > 0$ we have a curve $\gamma \in C^1(B_{20R},\R^2)$, $|\gamma'| \equiv 1$ satisfies (where $n(x) = \gamma'(x)^\perp$)
\begin{equation}\label{eq:pr:VMOcontrol:small}
  \int_{-20R}^{20R} \brac{\int_{-20R}^{20R} \frac{\langle n(\gamma(y)),\gamma(y)-\gamma(x)\rangle}{|\gamma(x)-\gamma(y)|^{2+s}} dy}^{\frac{1}{s}} dx< \eps.
\end{equation}
Assume also that $\gamma$ is convex in the sense of \Cref{def:convexcurve} in $B_{20 R}$.

Then
\begin{equation}\label{eq:goalreflcontrol}
  [\gamma']_{W_2^{s,\frac{1}{s}}(B_{R})} \aleq_s \int_{-10R}^{10R} \brac{\int_{-10R}^{10R} \frac{\langle n(\gamma(y)),\gamma(y)-\gamma(x)\rangle}{|\gamma(x)-\gamma(y)|^{2+s}} dy}^{\frac{1}{s}} dx.
  \end{equation}
\end{theorem}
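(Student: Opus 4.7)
The plan is to reduce the problem to a local graph parametrization, identify the Willmore integrand pointwise with the potential integral appearing in \Cref{Toperator}, and then use Stein's theorem \Cref{th:steinident} to convert pointwise control of $|D|^{s+1}f$ into a $W_2^{s,1/s}$-seminorm bound on $\gamma'$.

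The first few steps consolidate the earlier lemmas. Applying \Cref{pr:VMOcontrol} to the smallness hypothesis \eqref{eq:pr:VMOcontrol:small} (permissible because convexity forces $\langle n(\gamma(y)),\gamma(y)-\gamma(x)\rangle\ge 0$, so the absolute values in \eqref{eq:absvaluewillmore} come for free) yields small BMO oscillation of $\gamma'$ on $B_{10R}$. \Cref{la:bilipvmo} then gives bilipschitz control $|\gamma(x)-\gamma(y)|\aeq|x-y|$ on $B_{10R}$, and \Cref{blatt:bilip} produces an orthogonal transformation after which $\gamma|_{B_{5R}}$ is the arc-length parametrization of the graph of a $C^1$ convex function $f:J\to\R$, with change of parameter $u(t)$ satisfying $u'(t)=1/\sqrt{1+f'(u(t))^2}$; in particular $u$ is bilipschitz with constants close to $1$.

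In graph coordinates a direct computation gives
\[
 \langle n(\gamma(y)),\gamma(y)-\gamma(x)\rangle = \pm\,\frac{f(u(y))-f(u(x))-f'(u(y))(u(y)-u(x))}{\sqrt{1+f'(u(y))^2}},
\]
whose sign is fixed by convexity. Using the bilipschitz identification $|\gamma(y)-\gamma(x)|\aeq|u(y)-u(x)|$ and the substitution $dy=\sqrt{1+f'(v)^2}\,dv$, the inner Willmore integral at $x$ becomes, up to uniformly bounded factors,
\[
 \int_{u(B_{10R})}\frac{f(v)-f(u)-f'(v)(v-u)}{|v-u|^{2+s}}\,dv,\qquad u=u(x).
\]
I then extend $f$ to $\tilde f\in C^1(\R)$ by prolonging it smoothly along its tangent beyond $u(B_{10R})$, preserving a uniform bound on $\|\tilde f'\|_\infty$; for $u\in u(B_R)$ the tail $\int_{|v-u|>cR}$ is controlled by $R^{-s}\|\tilde f'\|_\infty$ and (after using the smallness of $\eps$) is absorbed by the RHS of \eqref{eq:goalreflcontrol}. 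Invoking \Cref{Toperator} on $\tilde f$,
\[
 c_s\,|D|^{s+1}\tilde f(u) = \int_\R \frac{\tilde f(u)-\tilde f(v)-\tilde f'(v)(u-v)}{|u-v|^{2+s}}\,dv,
\]
so that on $u(B_R)$ the pointwise quantity $||D|^{s+1}\tilde f(u)|$ is dominated by the Willmore inner integral at $x=u^{-1}(u)$.

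Raising this pointwise bound to the power $\tfrac{1}{s}$, integrating over $u(B_R)$, and applying Stein's theorem \Cref{th:steinident} in the form $[\tilde f']_{W_2^{s,1/s}(\R)}\aeq \||D|^s\tilde f'\|_{L^{1/s}(\R)}=\||D|^{s+1}\tilde f\|_{L^{1/s}(\R)}$ gives the desired control of $[f']_{W_2^{s,1/s}(u(B_R))}$ by the Willmore RHS. The final step transfers this to $\gamma'$: because $\gamma'(t)=(1,f'(u(t)))/\sqrt{1+f'(u(t))^2}$ is a smooth, Lipschitz function of $f'$ composed with the bilipschitz map $u$, we have $[\gamma']_{W_2^{s,1/s}(B_R)}\aleq[f']_{W_2^{s,1/s}(u(B_R))}$, with the composition step handled via an extension and \Cref{la:sob}. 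The main obstacle is the bookkeeping in the extension/cutoff step, where the tail error in the potential formula must be absorbed either by the $\eps$-smallness of the hypothesis or by the RHS itself; along with this, the chain-rule step for the fractional seminorm under the bilipschitz change of variables $u$ and the nonlinear composition $f'\mapsto(1,f')/\sqrt{1+f'^2}$ requires care, but follows from the same $\eps$-small bilipschitz bounds together with Stein's identification.
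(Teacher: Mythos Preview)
Your overall strategy is sound and close in spirit to the paper's, but there are two genuine gaps in the extension/localization step that need to be addressed before the argument is complete.

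First, the tail absorption. You claim that for $u\in u(B_R)$ the tail $\int_{|v-u|>cR}\frac{|\tilde f(u)-\tilde f(v)-\tilde f'(v)(u-v)|}{|v-u|^{2+s}}\,dv$ is controlled by $R^{-s}\|\tilde f'\|_\infty$ and can then be absorbed into the right-hand side of \eqref{eq:goalreflcontrol}. But after normalizing $R=1$, this is an $O(1)$ quantity (or at best $O(\eps)$ once you feed in the BMO smallness of $f'$), whereas the right-hand side can be arbitrarily small---think of $\gamma$ nearly a straight line. An $O(\eps)$ error cannot be absorbed into a quantity that may be $\ll\eps$. What you actually need is that the tail is bounded by the \emph{Willmore energy itself}, not by $\eps$; this requires tracking that the numerator is controlled by the oscillation of $f'$, which via \Cref{pr:VMOcontrol} is bounded by (a power of) the Willmore energy.

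Second, and more seriously, the application of Stein's theorem. You integrate $||D|^{s+1}\tilde f|^{1/s}$ only over $u(B_R)$ and then invoke \Cref{th:steinident} to obtain $[f']_{W_2^{s,1/s}(u(B_R))}$. But Stein's identification is \emph{global}: $[\tilde f']_{W_2^{s,1/s}(\R)}\aeq\||D|^{s+1}\tilde f\|_{L^{1/s}(\R)}$. A local $L^{1/s}$ bound on $|D|^{s+1}\tilde f$ does not by itself give a local $W_2^{s,1/s}$ bound on $\tilde f'$. Your linear extension makes $\tilde f'$ non-decaying at infinity (it tends to two different constants at $\pm\infty$), so you must separately estimate $\||D|^{s+1}\tilde f\|_{L^{1/s}(\R\setminus u(B_R))}$ and show it is bounded by the Willmore energy; as written this is missing.

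For comparison, the paper avoids both issues by never passing to $f$. The key pointwise step (\Cref{la:sobc:1} and \Cref{la:sobc:2}) shows directly that
\[
|\gamma(x)-\gamma(y)-\gamma'(y)(x-y)|\aleq \langle n(\gamma(y)),\gamma(y)-\gamma(x)\rangle
\]
in the original parametrization, using the graph property only qualitatively to check a sign. Localization is then done on $\gamma$ itself via $\tilde\gamma=\eta(\gamma-q)+q$ with $\eta$ compactly supported and $q$ affine; because $\eta\bar\gamma$ has compact support, the far-field part of $\||D|^{s+1}(\eta\bar\gamma)\|_{L^{1/s}}$ is estimated by $\|\eta\bar\gamma\|_{L^1}$, and all cutoff error terms are bounded by $\brac{\int\!\!\int|\gamma'(x)-\gamma'(y)|^q}^{1/q}$, which \eqref{eq:goalVMO} converts back into the Willmore energy. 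This compact-support localization is precisely what makes Stein's global equivalence usable and what your linear-extension approach lacks.
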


Our first observation is that under the assumption that we are locally a graph  (or the weaker assumptions, that the derivatives $\gamma'$ do not make a full turn) we have a pointwise control of corresponding second order difference quotient.
\begin{lemma}\label{la:sobc:1}
Assume that $\gamma \in \lip(B_R,\R^2)$ with $|\gamma'|\equiv 1$ a.e. in $B_R$. If
\[
 \sup_{z_1,z_2 \in B_R} |\gamma'(z_1)-\gamma'(z_2)| = c < 2,
\]
then for almost every $x,y \in B_R$ we have 
\[
 \int_{[x,y]} |\langle n(\gamma(y)) , \gamma'(z) \rangle|\, dz \ageq_c |\gamma(x)-\gamma(y)-\gamma'(y)(x-y)|.
\]
\end{lemma}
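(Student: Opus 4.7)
The plan is to use the fundamental theorem of calculus together with a decomposition of $\gamma'(z)-\gamma'(y)$ in the orthonormal frame $\{\gamma'(y),\, n(\gamma(y))\}$. Writing
\[
\gamma'(z)-\gamma'(y) = a(z)\,\gamma'(y) + b(z)\, n(\gamma(y)),
\]
where $b(z) = \langle n(\gamma(y)), \gamma'(z)\rangle$ (using $\gamma'(y) \perp n(\gamma(y))$), the fundamental theorem of calculus gives
\[
\gamma(x)-\gamma(y)-\gamma'(y)(x-y) = \int_y^x \brac{a(z)\gamma'(y) + b(z)\, n(\gamma(y))}\, dz,
\]
and hence by the triangle inequality
\[
\abs{\gamma(x)-\gamma(y)-\gamma'(y)(x-y)} \leq \int_{[x,y]} |a(z)|\, dz + \int_{[x,y]} |\langle n(\gamma(y)), \gamma'(z)\rangle|\, dz.
\]
It therefore suffices to dominate $|a(z)|$ pointwise by a constant (depending only on $c$) times $|b(z)|$.

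The second step exploits the constraint $|\gamma'|\equiv 1$. Expanding $1 = |\gamma'(z)|^2 = \abs{(1+a(z))\gamma'(y) + b(z)\, n(\gamma(y))}^2$ in the orthonormal frame yields the algebraic identity
\[
2\,a(z) + a(z)^2 + b(z)^2 = 0,
\]
so $a(z)\leq 0$, $|a(z)| = \tfrac{1}{2}|\gamma'(z)-\gamma'(y)|^2$, and $b(z)^2 = |a(z)|\brac{2-|a(z)|}$. The hypothesis $\sup_{z_1,z_2}|\gamma'(z_1)-\gamma'(z_2)|\leq c < 2$ now forces $|a(z)| \leq c^2/2 < 2$, and consequently $b(z)^2 \geq (2-c^2/2)\,|a(z)|$. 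Combining this with the trivial bound $|b(z)| \leq c$ gives
\[
|a(z)| \leq \frac{b(z)^2}{2-c^2/2} \leq \frac{c}{2-c^2/2}\,|b(z)|.
\]
Plugging this pointwise comparison into the triangle-inequality estimate above yields the claim with multiplicative constant $1 + c/(2-c^2/2)$.

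The only delicate aspect is that the strict inequality $c < 2$ is essential: the constant $c/(2-c^2/2)$ blows up as $c\to 2$, reflecting the geometric fact that if two tangent directions along the curve were permitted to be antipodal, then $\gamma$ could perform a ``U-turn'' between $y$ and $x$ without accumulating any normal displacement, and the estimate would fail. Aside from keeping careful track of this constant, no further obstacles are anticipated.
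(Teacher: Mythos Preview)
Your proof is correct and follows essentially the same approach as the paper's. Both arguments decompose $\gamma'(z)-\gamma'(y)$ in the orthonormal frame $\{\gamma'(y),n(\gamma(y))\}$, use the constraint $|\gamma'|\equiv 1$ to derive an algebraic relation between the tangential and normal components (the paper writes this as $|n(\gamma(y))\cdot(\gamma'(y)-\gamma'(z))|^2 = |\gamma'(y)-\gamma'(z)|^2\bigl(1-\tfrac14|\gamma'(y)-\gamma'(z)|^2\bigr)$, which is equivalent to your $b(z)^2=|a(z)|(2-|a(z)|)$), exploit $c<2$ to dominate the tangential part by the normal part, and finish with the fundamental theorem of calculus.
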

\begin{proof}
We have by Pythagoras' theorem
\[
 |\gamma'(y)-\gamma'(z)|^2 = |n(\gamma(y))\cdot (\gamma'(y)-\gamma'(z))|^2 +|\gamma'(y)\cdot (\gamma'(y)-\gamma'(z))|^2.
\]
Now $|\gamma'| \equiv 1$ implies
\[
|\gamma'(y)\cdot (\gamma'(y)-\gamma'(z))|^2 = \frac{1}{4} |\gamma'(y)-\gamma'(z)|^4.
\]
Thus we have 
\[
\begin{split}
 |n(\gamma(y))\cdot (\gamma'(y)-\gamma'(z))|^2 =& |\gamma'(y)-\gamma'(z)|^2 -\frac{1}{4} |\gamma'(y)-\gamma'(z)|^4\\
 =& |\gamma'(y)-\gamma'(z)|^2 \brac{1-\frac{1}{4} |\gamma'(y)-\gamma'(z)|^2}\\
 \geq&|\gamma'(y)-\gamma'(z)|^2 \brac{1-\frac{1}{4} c^2}.\\
 \end{split}
\]
So we have (by the fundamental theorem)
\[
 \int_{[x,y]} |n(\gamma(y)) \cdot \gamma'(z)|\, dz \ageq_c \int_{[x,y]} |\gamma'(y)-\gamma'(z)| \ageq |\gamma(y)-\gamma(x)-\gamma'(y)(x-y)|.
\]

\end{proof}

On the other hand, we have also the following result.
\begin{lemma}\label{la:sobc:2}
Assume that $\gamma \in C^1(B_R)$ with $|\gamma'|\equiv 1$. Assume that $\gamma$ is the graph of a convex function
then for every  $x,y \in B_R$ we have 
\[
 \int_{[x,y]} |\langle n(\gamma(y)) , \gamma'(z) \rangle|\, dz = \langle n(\gamma(y)),\gamma(y)-\gamma(x)\rangle.
\]
\end{lemma}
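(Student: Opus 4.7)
The claim is essentially a fundamental theorem of calculus computation: if the integrand $\langle n(\gamma(y)), \gamma'(z)\rangle$ has a constant sign as $z$ ranges over $[x,y]$, then $\int_{[x,y]}|\langle n(\gamma(y)),\gamma'(z)\rangle|\,dz = \left|\int_{[x,y]}\langle n(\gamma(y)),\gamma'(z)\rangle\,dz\right| = |\langle n(\gamma(y)), \gamma(y)-\gamma(x)\rangle|$, and convexity (together with the fact that $n(\gamma(y))$ is the outward normal to the convex curve at $\gamma(y)$) gives $\langle n(\gamma(y)), \gamma(y)-\gamma(x)\rangle \geq 0$, so the absolute value can be dropped. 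The entire content of the lemma is therefore to verify that the integrand does not change sign on $[x,y]$.

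The plan is to reduce to coordinates by exploiting the graph structure. After a rigid rotation (which preserves both sides of the identity, since $n$ rotates in the same way as $\gamma'$), I may assume that $\gamma(B_R)$ is the graph of a $C^1$ convex function $f : \mathbb{R} \to \mathbb{R}$. Since $|\gamma'| \equiv 1$ and $\gamma$ is a graph in the horizontal direction, there is a strictly monotone reparametrization $\phi : B_R \to \mathbb{R}$ such that $\gamma(t) = (\phi(t), f(\phi(t)))$, with $\phi'(t) = \pm (1+f'(\phi(t))^2)^{-1/2}$; fix the sign so that $\phi' > 0$. Then
\[
\gamma'(z) = \phi'(z)\bigl(1, f'(\phi(z))\bigr), \qquad n(\gamma(y)) = \gamma'(y)^\perp = \phi'(y)\bigl(f'(\phi(y)), -1\bigr),
\]
so that
\[
\langle n(\gamma(y)), \gamma'(z)\rangle = \phi'(y)\phi'(z)\bigl(f'(\phi(y)) - f'(\phi(z))\bigr).
\]

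Since $\phi'(y), \phi'(z) > 0$, the sign of $\langle n(\gamma(y)), \gamma'(z)\rangle$ equals the sign of $f'(\phi(y)) - f'(\phi(z))$. Convexity of $f$ means $f'$ is non-decreasing, and monotonicity of $\phi$ means that as $z$ ranges over $[x,y]$, $\phi(z)$ ranges monotonically between $\phi(x)$ and $\phi(y)$. Hence $f'(\phi(y)) - f'(\phi(z))$ has constant sign for $z \in [x,y]$, and so does the original integrand.

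Finally, by the fundamental theorem of calculus,
\[
\int_{[x,y]} \langle n(\gamma(y)),\gamma'(z)\rangle\, dz \;=\; \pm\langle n(\gamma(y)), \gamma(y)-\gamma(x)\rangle,
\]
where the sign matches the constant sign of the integrand on $[x,y]$. Taking absolute values, and using that $\langle n(\gamma(y)), \gamma(y)-\gamma(x)\rangle \geq 0$ (because convexity of the graph places $\gamma(x)$ on the same side of the tangent line at $\gamma(y)$ for all $x$, cf.~\eqref{fcdghte} / \Cref{milan}), yields the desired identity. There is no real obstacle here; the only thing to be a little careful about is the orientation issue when $y<x$ versus $y>x$, which is handled uniformly by tracking the constant sign and matching it with the sign of $\langle n(\gamma(y)), \gamma(y)-\gamma(x)\rangle$.
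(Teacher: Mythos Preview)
Your proof is correct and follows essentially the same route as the paper: both arguments write $\gamma$ as the graph of a convex function $f$, compute $\langle n(\gamma(y)),\gamma'(z)\rangle$ explicitly as a positive multiple of $f'(\phi(y))-f'(\phi(z))$, use monotonicity of $f'$ to obtain a constant sign on $[x,y]$, and then apply the fundamental theorem of calculus together with convexity to remove the absolute value. Your treatment of the orientation/sign issue is slightly more explicit than the paper's case split $x\le z\le y$ versus $y\le z\le x$, but the argument is otherwise the same.
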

\begin{proof}

Indeed, being $\gamma$ the graph of a convex function $f$, we have that 
\begin{equation*}
\langle n(\gamma(y)),\gamma'(z) \rangle=\frac{1}{\sqrt{1+f'(\gamma_1(x))^2}\sqrt{1+f'(\gamma_1(y))^2}}\left(f'(\gamma_1(y))-f'(\gamma_1(z))\right)\geq 0,
\end{equation*}
since $f'$ is non decreasing and $\gamma_1(y)>\gamma_1(z)$ for every $z\in (x,y)$.

That is, if $x \leq z \leq y$ then
\[
\langle n(\gamma(y)) , \gamma'(z) \rangle\geq 0
\]
and if $y \leq z \leq x$ then
\[
\langle n(\gamma(y)) , \gamma'(z) \rangle\leq 0
\]
In both cases the claim follows from the fundamental theorem of calculus.
\end{proof}

\begin{proof}[Proof of \Cref{th:sobolevcontrol}]
W.l.o.g. $R =1$. Taking $\eps$ small enough in \eqref{eq:pr:VMOcontrol:small} we obtain from \Cref{pr:VMOcontrol} combined with \Cref{blatt:bilip}, \Cref{la:bilipvmo} that $\gamma$ is a convex graph, and biLipschitz on $B_5$.
 
Thus we can apply \Cref{la:sobc:1} and \Cref{la:sobc:2} to obtain
\[
 |\gamma(x)-\gamma(y) -\gamma'(y)(x-y)| \aleq \langle n(\gamma(y),\gamma(y)-\gamma(x)\rangle.
\]

Integrating this, and using that $|\gamma(x)-\gamma(y)| \leq |x-y|$ we conclude

\begin{equation}\label{hiatus}
\begin{split}
&\int_{[-5,5]}\left(\int_{[-5,5]} \frac{\left|\gamma(x)-\gamma(y)-\gamma'(y)(x-y)\right|}{\left|x-y\right|^{2+s}}\,dy \right)^\frac{1}{s}\,dx\\
\aleq&\int_{[-5,5]}\left(\int_{[-5,5]}  \frac{\langle n(\gamma(y)), \gamma(y)-\gamma(x)\rangle}{\left|\gamma(x)-\gamma(y)\right|^{2+s}}\,dy  \right)^\frac{1}{s}\,dx 
\end{split}
\end{equation}

Now, pick the usual bump function $\eta \in C_c^\infty\left(B\left(0,3\right)\right)$, $\eta \equiv 1$ in $B\left(0,2\right)$. We set
\begin{equation*}
\tilde{\gamma}(x) := \eta(x)(\gamma(x)-q(x))+q(x) \equiv \eta(x) \bar{\gamma}(x) + q(x),
\end{equation*}
where $\bar{\gamma}:= \gamma- q$ and $q(x)=q_1+x\,q_2$ is an affine map which essentially corresponds to zero-th and first moment of $\gamma$. More precisely,
\begin{equation*}
q_1 := \mvint_{(-2,2)}\gamma(z)\,dz,\quad q_2:=\mvint_{(-2,2)} \gamma'(z)\,dz.
\end{equation*}
We notice that
\begin{equation*}
\mvint_{(-2,2)} q(x)\,dx=\mvint_{(-2,2)} \gamma(x)\,dx \quad \mbox{and } \mvint_{(-2,2)} q'(x)\,dx=\mvint_{(-2,2)} \gamma'(x)\,dx.
\end{equation*}

By the discrete product rule
\begin{equation*}
\begin{split}
&\tilde{\gamma}(x)-\tilde{\gamma}(y)-\tilde{\gamma}'(y)(x-y)\\
=&\eta(x)\bar{\gamma}(x) - \eta(y)\bar{\gamma}_{\mathcal{P}}(y) - (\eta\bar{\gamma})'(y)(x-y)\\
=&\eta(x)(\bar{\gamma}(x)-\bar{\gamma}(y)-\bar{\gamma}'(y)(x-y))\\
&+(\eta(x)- \eta(y)-\eta'(y) (x-y))\bar{\gamma}(y)\\
& +(\eta(x) - \eta(y))\, \bar{\gamma}'(y)(x-y).
\end{split}
\end{equation*}
Since $q$ is affine, we have
\begin{equation*}
\gamma(x)-\gamma(y)-\gamma'(y)(x-y)=\bar{\gamma}(x)-\bar{\gamma}(y)-\bar{\gamma}'(y)(x-y).
\end{equation*} 

With these observations, \eqref{hiatus} readily implies
\begin{equation*}
\begin{split}
& \int_{[-5,5]}\left( \int_{[-5,5]}\frac{\eta(x) \abs{\bar{\gamma}(x)-\bar{\gamma}(y)-\bar{\gamma}'(y)(x-y)}}{\left|x-y\right|^{2+s}}\,dy\right)^\frac{1}{s}\,dx\\
\aleq &  \int_{[-5,5]}\left(\int_{[-5,5]}  \frac{\langle n(\gamma(y)), \gamma(y)-\gamma(x)\rangle}{\left|\gamma(x)-\gamma(y)\right|^{2+s}}\,dy  \right)^\frac{1}{s}\,dx .
\end{split}
\end{equation*}

Also, we have for $x \in [-5,5]$, for any $p \in (1,\infty)$ with $p' < \frac{1}{s}$, using H\"older's inequality,
\begin{equation*}
\begin{split}
\int_{[-5,5]} \frac{\abs{\eta(x) - \eta(y)}\, \abs{\bar{\gamma}'(y)}}{|x-y|^{1+s}}\, dy \aleq &\int_{[-5,5]} \abs{\gamma'(y)-q'(y)} |x-y|^{-s}\, dy \\
\aleq & \brac{\int_{[-5,5]} \abs{\gamma'(y)-q'(y)}^{p}\,dy}^{\frac{1}{p}}.
\end{split}
\end{equation*}
Thus, we obtain that
\[
\begin{split}
  &\brac{\int_{[-5,5]} \brac{\int_{[-5,5]} \frac{\abs{(\eta(x) - \eta(y))\, \bar{\gamma}'(y)(x-y)}}{|x-y|^{2+s}}\, dy}^{\frac{1}{s}} dx}^s\\
 \aleq &\brac{\int_{[-5,5]} \int_{[-5,5]} |\gamma'(x)-\gamma'(y)|^{p}\, dx\, dy}^{\frac{1}{p}}.
  \end{split}
\]

Finally, we have for $x \in [-5,5]$, for any $p \in (1,\infty)$ with $p' < \frac{1}{s}$ (observe  that ${(\bar{\gamma}(y))}_{[-5,5]}=0$)
\begin{equation*}
\begin{split}
&\left|\int_{[-5,5]} \frac{(\eta(x)- \eta(y)-\eta'(y) (x-y))\bar{\gamma}(y)}{|x-y|^{2+s}}\, dy\right|\\
\aleq &\int_{[-5,5]} |x-y|^{-s} \abs{\bar{\gamma}(y)} dy\\
= & \int_{[-5,5]} |x-y|^{-s} \abs{\bar{\gamma}(y)-{(\bar{\gamma}(y))}_{[-5,5]}} dy\\ 
\leq  & \int_{[-5,5]}\int_{[-5,5]} |x-y|^{-s} \abs{\bar{\gamma}(y)-\bar{\gamma}(z)}\,dz\,dy\\ 
\leq & \int_{[-5,5]}\int_{[-5,5]} \int_{[y,z]} |x-y|^{-s} \abs{\bar{\gamma}'(z_2)}\,dz_2\,dz\,dy\\ 
=& \int_{[-5,5]}\int_{[-5,5]} \int_{[y,z]} |x-y|^{-s} \abs{\bar{\gamma}'(z_2)-(\bar{\gamma}')_{[-5,5]}}\,dz_2\,dz\,dy\\ 
\leq & \int_{[-5,5]}\int_{[-5,5]} \int_{[-5,5]}\int_{[-5,5]} |x-y|^{-s} \abs{\gamma'(z_2)-\gamma'(z_3)}\,dz_3\,dz_2\,dz\,dy\\
\aleq &  \int_{[-5,5]}\int_{[-5,5]} \abs{\gamma'(z_2)-\gamma'(z_3)}\,dz_3\,dz_2.
\end{split}
\end{equation*}
So that
\begin{equation*}
\begin{split}
&\brac{\int_{[-5,5]} \abs{\int_{[-5,5]} \frac{(\eta(x)- \eta(y)-\eta'(y) (x-y))\bar{\gamma}(y)}{|x-y|^{2+s}}\, dy}^{\frac{1}{s}} dx}^s\\
\aleq &\brac{\int_{[-5,5]} \int_{[-5,5]} |\gamma'(x)-\gamma'(y)|^{p}\, dx\, dy}^{\frac{1}{p}}.
\end{split}
\end{equation*}

In conclusion, using also \eqref{eq:goalVMO},
\[
\begin{split}
& \int_{[-5,5]}\left( \int_{[-5,5]}\frac{\abs{\tilde{\gamma}(x)-\tilde{\gamma}(y)-\tilde{\gamma}'(y)(x-y)}}{\left|x-y\right|^{2+s}}\,dy\right)^\frac{1}{s}\,dx\\
\aleq &  \int_{[-5,5]}\left(\int_{[-5,5]}  \frac{\langle n(\gamma(y)), \gamma(y)-\gamma(x)\rangle}{\left|\gamma(x)-\gamma(y)\right|^{2+s}}\,dy  \right)^\frac{1}{s}\,dx. 
\end{split}
 \]
The claim now follows from the following proposition, \Cref{pr:localsobcontrol} and again  \eqref{eq:goalVMO}.
\end{proof}

\begin{proposition}\label{pr:localsobcontrol}
Let $\gamma: [-4,4] \to \R^2$ and set for $\eta \in C_c^\infty([-3,3])$ $\eta \equiv 1$ in $[-2,2]$,
\[
 \tilde{\gamma}(x) := \eta(x)(\gamma(x)-q(x))+q(x) \equiv \eta(x) \bar{\gamma}(x) + q(x),
\]
where $\bar{\gamma}:= \gamma- q$ and $q(x)=q_1+x\,q_2$ is an affine map which essentially corresponds to zero-th and first moment of $\gamma$. More precisely,
\begin{equation*}
q_1 := \mvint_{(-2,2)}\gamma(z)\,dz,\quad q_2:=\mvint_{(-2,2)} \gamma'(z)\,dz.
\end{equation*}

Assume that 
\[
 \Gamma := \int_{(-4,4)} \brac{\int_{(-4,4)} \frac{|\tilde{\gamma}(x) -\tilde{\gamma}(y) -\tilde{\gamma}'(y)(x-y)|}{|x-y|^{2+s}}\, dy}^{\frac{1}{s}} dx  < \infty.
\]
Then for some $q \geq 1$
\[
 [\gamma']_{W^{s,\frac{1}{s}}_2((-1,1))} \aleq \Gamma^s + \brac{\int_{(-4,4)}\int_{(-4,4)} |\gamma'(x)-\gamma'(y)|^q\, dx\, dy}^{\frac{1}{q}}
\]
\end{proposition}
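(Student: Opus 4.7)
\textbf{Proof proposal for \Cref{pr:localsobcontrol}.} The plan is to reduce the claim to Stein's identification (\Cref{th:steinident}) combined with the potential formula of \Cref{Toperator}, exploiting that $g:=\tilde\gamma-q$ has compact support in $[-3,3]$ and that $q$ is affine. First I will observe that $\tilde\gamma(x)-\tilde\gamma(y)-\tilde\gamma'(y)(x-y)=g(x)-g(y)-g'(y)(x-y)$, since $q(x)-q(y)-q'(y)(x-y)\equiv 0$; equivalently, $(-\Delta)^{(s+1)/2}\tilde\gamma=(-\Delta)^{(s+1)/2}g$. Since $\eta\equiv 1$ on $(-2,2)$, $\tilde\gamma'=\gamma'$ on $(-1,1)$, so by the trivial restriction and \Cref{th:steinident},
\[
[\gamma']_{W^{s,\frac{1}{s}}_2((-1,1))}\leq [\tilde\gamma']_{W^{s,\frac{1}{s}}_2(\R)}\aeq \||D|^{s+1}g\|_{L^{1/s}(\R)}.
\]
Then \Cref{Toperator} (after a standard mollification of $g$, using that $g$ is compactly supported and Lipschitz) identifies $(-\Delta)^{(s+1)/2}g(x)$ pointwise with the nonlocal integral $\int_\R \frac{g(x)-g(y)-g'(y)(x-y)}{|x-y|^{2+s}}\,dy$.

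The next step is to split the inner integral according to whether $|y|\leq 4$ or $|y|>4$, and the outer integral according to whether $|x|\leq 4$ or $|x|>4$. For $|y|>4$ the integrand simplifies to $g(x)/|x-y|^{2+s}$, since $g$ and $g'$ vanish there; for $|x|>4$ the outer integrand is $\int_{-3}^{3}(-g(y)-g'(y)(x-y))/|x-y|^{2+s}\,dy$ where $|x-y|\ageq |x|-3\geq 1$. Using the subadditivity $(a+b)^s\leq a^s+b^s$ for $s\in(0,1)$, I will estimate
\[
\||D|^{s+1}g\|_{L^{1/s}(\R)}^{1/s} \aleq \Gamma^{1/s} + \|g\|_{L^{1/s}(-3,3)}^{1/s} + \|g\|_{L^{1}(-3,3)}^{1/s} + \|g'\|_{L^{1}(-3,3)}^{1/s},
\]
where the first term absorbs the $(x,y)\in(-4,4)^2$ piece directly via the definition of $\Gamma$, the second term comes from bounding $|g(x)|\int_{|y|>4}|x-y|^{-2-s}\,dy\aleq |g(x)|$ for $x\in(-3,3)$, and the last two terms come from the standard tail decay estimate $\int_{|x|>4}(|x|-3)^{-(1+s)/s}\,dx<\infty$ applied to the kernel bounds $\int_{-3}^3 |g'(y)|/|x-y|^{1+s}\,dy$ and $\int_{-3}^3 |g(y)|/|x-y|^{2+s}\,dy$. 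Raising to the power $s$ and using subadditivity once more produces
\[
\||D|^{s+1}g\|_{L^{1/s}(\R)} \aleq \Gamma^s + \|g\|_{L^\infty(-3,3)} + \|g\|_{L^1(-3,3)} + \|g'\|_{L^1(-3,3)}.
\]

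Third, I will estimate the three norms on the right. By the choice of $q_1,q_2$, both $\bar\gamma$ and $\bar\gamma'$ have zero mean on $(-2,2)$: integrating $\bar\gamma'=\gamma'-q_2$ one finds $\|\bar\gamma\|_{L^\infty(-3,3)}\aleq \|\bar\gamma'\|_{L^1(-3,3)}$, and since $g=\eta\bar\gamma$, $g'=\eta'\bar\gamma+\eta\bar\gamma'$, all three terms are controlled by $\|\bar\gamma'\|_{L^1(-3,3)}=\int_{-3}^{3}|\gamma'(x)-(\gamma')_{(-2,2)}|\,dx\aleq \int_{-4}^4\int_{-4}^4|\gamma'(x)-\gamma'(y)|\,dx\,dy$. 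Finally, H\"older's inequality on the bounded domain $(-4,4)^2$ upgrades the $L^1$-double integral to an $L^q$-double integral at the cost of a constant, yielding exactly the bound
\[
[\gamma']_{W^{s,\frac{1}{s}}_2((-1,1))} \aleq \Gamma^s + \brac{\int_{-4}^{4}\int_{-4}^{4}|\gamma'(x)-\gamma'(y)|^q\,dx\,dy}^{\frac{1}{q}}.
\]

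The main technical obstacle I anticipate is the justification of \Cref{Toperator} for $g$, which is only Lipschitz and compactly supported rather than $C_c^\infty$: this requires a careful mollification and dominated convergence argument to pass both the identity and the tail estimates to the limit, together with the bookkeeping of the tail term $\int_{|y|>4}|x-y|^{-2-s}\,dy$ near the boundary $|x|=4$, where $g$ must be used to vanish to make everything integrable. Apart from that, all other ingredients are standard Poincar\'e/Jensen manipulations on intervals.
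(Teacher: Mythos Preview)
Your approach is essentially the same as the paper's: both reduce to Stein's identification (\Cref{th:steinident}) plus the potential formula (\Cref{Toperator}) applied to the compactly supported function $g=\eta\bar\gamma$, then split off the tails in $x$ and $y$ outside $(-4,4)$ and control those by Poincar\'e-type bounds on $\bar\gamma$, $\bar\gamma'$ using the zero-mean conditions. The paper handles the outer tail $|x|>4$ by citing an external lemma (\cite[Lemma~A.1]{BRS16}) rather than computing it directly as you do, and it extends the inner integral to $\R$ first before invoking \Cref{Toperator}, but the content is identical.

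Two small bookkeeping remarks. First, in your displayed intermediate estimate the term $\Gamma^{1/s}$ should be $\Gamma$ (the $(x,y)\in(-4,4)^2$ piece of $\int_\R|(-\Delta)^{(s+1)/2}g|^{1/s}$ is bounded by $\Gamma$ itself, since $\Gamma$ already carries the $1/s$-power on the inner integral); raising to the power $s$ then correctly produces $\Gamma^s$. Second, the regularity concern you flag about \Cref{Toperator} is legitimate in the abstract statement, but in the only application (inside the proof of \Cref{th:sobolevcontrol}) one has $\gamma\in C^1$, so a mollification argument is routine.
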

\begin{proof}

We recall, thanks to Poincar\'{e} inequality, for $1\leq p <+\infty$ we deduce 
\begin{equation}\label{jnol012-}
\left\|\bar{\gamma}\right\|_{L^p(B(0,2))} =\left\|\gamma-q\right\|_{L^p(B(0,2))}\aleq  \left\|\gamma'-q'\right\|_{L^p(B(0,2))}.
\end{equation}
Therefore, we obtain 
\begin{equation}\label{eq:normalizedgammasob}
\begin{split}
\|\tilde{\gamma}'-q'\|_{L^p(B(0,2))} &\leq \left\|\eta'\left(\gamma-q\right)\right\|_{L^p(B(0,2))}+\left\|\eta(\gamma'-q')\right\|_{L^p(B(0,2))}\\
&\aleq  \left\|\gamma-q\right\|_{L^p(B(0,2))}+\left\|\gamma'-q'\right\|_{L^p(B(0,2))} \\
&\aleq  \left\|\gamma'-q'\right\|_{L^p(B(0,2))}\\
& \aleq \left(\int_{(-2,2)}\int_{(-2,2)}  \left|\gamma'(z)-\gamma'(y)\right|^p\,dz\,dy\right)^{\frac{1}{p}}.
\end{split}
\end{equation}

Thanks to the support of $\eta$ (and the fact that $q$ is affine) we have 
\[
\begin{split}
 &\int_{(-4,4)} \brac{\int_{\R} \frac{|\tilde{\gamma}(x) -\tilde{\gamma}(y) -\tilde{\gamma}'(y)(x-y)|}{|x-y|^{2+s}}\, dy}^{\frac{1}{s}} dx\\
 \aleq&\int_{(-4,4)} \brac{\int_{(-4,4)} \frac{|\tilde{\gamma}(x) -\tilde{\gamma}(y) -\tilde{\gamma}'(y)(x-y)|}{|x-y|^{2+s}}\, dy}^{\frac{1}{s}} dx  \\
 &+\int_{(-{3},{3})} \brac{\int_{\R \setminus (-4,4)} \frac{|\tilde{\gamma}(x) -\tilde{\gamma}(y) -\tilde{\gamma}'(y)(x-y)|}{|x-y|^{2+s}}\, dy}^{\frac{1}{s}} dx.  \\
 \end{split}
\]
We see that 
\[
\begin{split}
 &\int_{(-{3},{3})} \brac{\int_{\R \setminus (-4,4)} \frac{|\tilde{\gamma}(x) -\tilde{\gamma}(y) -\tilde{\gamma}'(y)(x-y)|}{|x-y|^{2+s}}\, dy}^{\frac{1}{s}} dx \\
 =&\int_{(-{3},{3})} \brac{\int_{\R \setminus (-4,4)} \frac{|\eta(x) \bar{\gamma}(x)|}{|x-y|^{2+s}}\, dy}^{\frac{1}{s}} dx \\
 =&\int_{(-{3},{3})} |\bar{\gamma}(x)|^{\frac{1}{s}}\, dx\\
 \overset{\eqref{jnol012-}}{\aleq}& \int_{(-4,4)}\int_{(-4,4)} |\gamma'(x)-\gamma'(y)|^{\frac{1}{s}}\, dx\, dy.
 \end{split}
\]
Now we have by the representation formula of the fractional Laplacian, \Cref{Toperator}
\[
 \|\laps{s+1} \brac{\eta \bar{\gamma}} \|_{L^{\frac{1}{s}}(-4,4)} \aleq \brac{\int_{(-4,4)} \brac{\int_{\R} \frac{|\tilde{\gamma}(x) -\tilde{\gamma}(y) -\tilde{\gamma}'(y)(x-y)|}{|x-y|^{2+s}}\, dy}^{\frac{1}{s}} dx}^s.
\]
By Stein's theorem, \Cref{th:steinident},
\[
 [\eta \bar{\gamma}]_{W^{s,\frac{1}{s}}_2(\R)} \aleq \|\laps{s+1} \brac{\eta \bar{\gamma}} \|_{L^{\frac{1}{s}}(-4,4)} + \|\laps{s+1} \brac{\eta \bar{\gamma}} \|_{L^{\frac{1}{s}}(\R \setminus (-4,4))}.
\]
Using the distance between $\supp \eta$ and $\R \setminus (-4,4)$, see, e.g., \cite[Lemma A.1]{BRS16}, we find
\[
 \|\laps{s+1} \brac{\eta \bar{\gamma}} \|_{L^{\frac{1}{s}}(\R \setminus (-4,4))} \aleq \|\eta \bar{\gamma}\|_{L^1(\R)} \overset{\eqref{jnol012-}}{\aleq} \int_{(-4,4)}\int_{(-4,4)} |\gamma'(x)-\gamma'(y)|^{\frac{1}{s}}\, dx\, dy.
\]
Thus, we have shown 
\[
 [\bar{\gamma}]_{W^{s,\frac{1}{s}}_2((-1,1))} \leq [\eta \bar{\gamma}]_{W^{s,\frac{1}{s}}_2({\R})} \aleq \Gamma^s + \brac{\int_{(-4,4)} |\gamma'(x)-\gamma'(y)|^{\frac{1}{s}}\, dx\, dy}^s.\qedhere
\]

\end{proof}

\section{Self-repulsiveness of Willmore energy for convex curves}\label{SElfRep}
Strongly inspired by results of Strzelecki-von der Mosel~\cite{SvdM12}, as adapted to the tangent point energy \cite[Theorem 4.9.]{BRSV21} we obtain the following statement, which states that points with locally small energy do not collide with any other points.

\begin{theorem}\label{th:smallenergyinjectivity}
There exists $\delta > 0$ such that the following holds.

Let $\gamma \in C^1(\S^1,\R^2)$ be a homeomorphism, $|\gamma'| \equiv 1$.
Also assume
\begin{equation}\label{dcsdaohbedfo6}
\int_{\S^1} \brac{\int_{B_{\rho}(x_0)} \frac{ \left|\langle n(\gamma(y)), \gamma(y)-\gamma(x) \rangle\right|}{|\gamma(x)-\gamma(y)|^{2+s}} \, dy}^{\frac{1}{s}}\, dx < \delta.
\end{equation}
If for any $z_0 \in \S^1$ we have
\[
 |\gamma(x_0)-\gamma(z_0)| < \frac{1}{10}\rho,
\]
then there exists $\bar{x} \in B_{\rho}(x_0)$ such that $\gamma(\bar{x}) = \gamma(z_0)$. In particular, we have $z_0\in B_\rho(x_0)$.
\end{theorem}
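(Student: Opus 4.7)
My plan is to argue by contradiction: assume $z_0\in\S^1$ with $|\gamma(z_0)-\gamma(x_0)|<\rho/10$ and yet $z_0\notin B_\rho(x_0)$. Choosing $\delta$ sufficiently small, the combination of \Cref{pr:VMOcontrol} applied on $B_\rho(x_0)$, \Cref{la:bilipvmo}, and \Cref{blatt:bilip} (using the standing convexity hypothesis of this section) yields that $\gamma$ is biLipschitz on $B_\rho(x_0)$ with constant as close to $1$ as desired, and that $\gamma(B_{\rho/4}(x_0))$ is a convex graph over the tangent line at $\gamma(x_0)$. After translation, rotation, and possibly reflection I normalize to $\gamma(x_0)=0$, $\gamma'(x_0)=e_1$, and write this graph piece as $\{(t,f(t))\}$ with $f$ convex, $f(0)=f'(0)=0$, and $\|f'\|_\infty$ as small as needed; with the paper's conventions the convex set $E=\mathrm{co}(\gamma(\S^1))$ lies above the graph and $n(\gamma(y))=\gamma'(y)^\perp$ is its outward normal.

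Write $\gamma(z_0)=(p,q)$. The supporting-line property at $\gamma(x_0)$ forces $q\geq 0$, and injectivity of $\gamma$ together with $z_0\notin B_\rho(x_0)$ excludes $\gamma(z_0)$ from the graph piece; thus $Q:=q-f(p)>0$ except in the degenerate case $q=f(p)=0$. That degenerate case I handle separately: convexity then forces the segment $[0,p]\times\{0\}$ into $\partial E$, and arc-length parametrization in direction $e_1$ combined with injectivity forces $z_0=x_0+p$, so $|z_0-x_0|=|p|<\rho/10$, a contradiction.

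The core step is a lower bound on the inner integral of \eqref{dcsdaohbedfo6} at $x=z_0$. Convexity of $f$ gives $\langle n(\gamma(y)),\gamma(y)-\gamma(z_0)\rangle\geq Q/\sqrt{1+f'(t)^2}\gtrsim Q$ uniformly in $y\in B_{\rho/10}(x_0)$, with $t=y-x_0$, while $|\gamma(y)-\gamma(z_0)|^2\aeq(t-p)^2+(f(t)-q)^2$; integrating the resulting Poisson-type kernel over $|t-p|\lesssim Q$ produces
\[
\int_{B_\rho(x_0)}\frac{|\langle n(\gamma(y)),\gamma(y)-\gamma(z_0)\rangle|}{|\gamma(y)-\gamma(z_0)|^{2+s}}\,dy \;\gtrsim\; Q^{-s}.
\]
The same estimate survives for every $x\in\S^1$ with $|x-z_0|<Q/2$: indeed $|\gamma(x)-\gamma(z_0)|<Q/2$ by $|\gamma'|\equiv 1$, so writing $\gamma(x)=(p',q')$ I have $q'-f(p')\geq Q/4$ once $\|f'\|_\infty$ is small, and the estimate repeats verbatim with $Q'=q'-f(p')$ in place of $Q$. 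Raising to the $1/s$-power and integrating over $\{|x-z_0|<Q/2\}\subset\S^1$ gives a lower bound of order $1$ on the left-hand side of \eqref{dcsdaohbedfo6}, contradicting $\delta$-smallness once $\delta\ll 1$, and $\bar x=z_0\in B_\rho(x_0)$ is the desired preimage.

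The main obstacle is that the small-energy hypothesis controls $\gamma$ only near $x_0$, giving no direct information near $z_0$; the entire strategy rests on transporting the strong graph structure near $x_0$ to a pointwise lower bound at $x=z_0$, and then spreading that bound across a $z_0$-neighborhood using only the coarse Lipschitz estimate $|\gamma(x)-\gamma(z_0)|\leq|x-z_0|$. The delicate point of the spreading step is that the pointwise bound $Q^{-s}$ deteriorates as $\gamma(z_0)$ approaches the graph, but the neighborhood on which the bound survives shrinks proportionally to $Q$, so that the product remains of order one.
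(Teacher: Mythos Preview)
Your approach is genuinely different from the paper's and, under the extra convexity hypothesis you invoke, essentially correct. The paper does not give a direct argument: it deduces \Cref{th:smallenergyinjectivity} from the Strzelecki--von der Mosel flatness lemma (\Cref{S-VdM}), which asserts that $\gamma(\S^1)\cap B_{2d}(\gamma(x_0))$ lies in an $\eps d$-tube about the segment $[\gamma(x_0),\gamma(y_0)]$, and then imports the topological argument of \cite[Theorem~4.9]{BRSV21} verbatim. That route needs no convexity and works in the generality in which the theorem is stated. Your route instead bootstraps the local graph picture via \Cref{pr:VMOcontrol}, \Cref{la:bilipvmo}, \Cref{blatt:bilip} and then produces a direct lower bound on the energy by a Poisson-type computation; this is entirely self-contained within the paper and arguably more transparent, but \Cref{blatt:bilip}, the supporting-line inequality $q\ge 0$, and your numerator bound all use convexity, which is \emph{not} among the hypotheses of \Cref{th:smallenergyinjectivity}. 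Since the paper only ever applies the theorem to convex curves this is harmless for the global argument, but as written you have proved a strictly weaker statement.

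Two small points to tighten. First, in the spreading step you need not only $Q'\ge Q/4$ but also $Q'\lesssim Q$, since the final lower bound is $\int_{|x-z_0|<Q/2}(Q')^{-1}\,dx$; this follows from $|Q'-Q|\le |q'-q|+|f(p')-f(p)|\le (1+\|f'\|_\infty)|x-z_0|$, so $Q'\le 2Q$ on that interval. Second, the degenerate case is simply $Q=0$, i.e.\ $q=f(p)$, not $q=f(p)=0$; but then $(p,q)$ lies on the graph piece $\gamma(B_{\rho/4}(x_0))$ and injectivity already forces $z_0\in B_{\rho/4}(x_0)$, so the separate discussion is unnecessary.
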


This result is a consequence of the following Lemma (see \cite[Lemma 4.8]{BRSV21} which is an adaptation of \cite[Lemma 2.1, Lemma 2.3]{SvdM12}).

We omit the proofs which are almost verbatim to \cite[Lemma 4.8]{BRSV21} and \cite[Theorem 4.9]{BRSV21}.

\begin{lemma}[Strzelecki-von der Mosel] \label{S-VdM}
Let $s\in (0,1)$. For any $\epsilon>0$ there exists $\delta$ such that the following holds. 

Let $\gamma\in {Lip}(\S^1,\R^2)$, $\left|\gamma'\right|\equiv 1$, and assume that for some $x_0\in \S^1$ and $\rho>0$ we have 
\begin{equation}\label{frt01se5t639}
\int_{\S^1}\brac{ \int_{B_\rho(x_0)}\frac{\left|\langle n(\gamma(y)), \gamma(y)-\gamma(x)\rangle\right|}{\left|\gamma(y)-\gamma(x)\right|^{2+s}}\,dy}^{\frac{1}{s}} \,dx<\delta.
\end{equation}
Moreover, assume that there is $y_0\in \S^1$ with $d:=\left|\gamma(x_0)-\gamma(y_0)\right|\leq \rho$. 

Then
\begin{equation}\label{wwhtp}
\gamma(\S^1)\cap B_{2d}(\gamma(x_0)) \subset  L_{\epsilon  d}(\gamma(x_0),\gamma(y_0)),
\end{equation} 
where 
\begin{equation*}
\begin{split}
&L(\gamma(x_0),\gamma(y_0)):=\left\lbrace (1-t)\gamma(x_0)+t\gamma(y_0)|\,t\in [0,1]\right\rbrace\quad \mbox{and }\\ 
&L_{\epsilon d}(\gamma(x_0),\gamma(y_0)):=\left\lbrace p\in \R^2|\, dist(p, L(\gamma(x_0),\gamma(y_0)))<\epsilon d \right\rbrace.
\end{split}
\end{equation*}
\end{lemma}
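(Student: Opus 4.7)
The plan is to argue by contradiction, following the scheme of Strzelecki--von der Mosel in \cite{SvdM12}. Suppose \eqref{wwhtp} fails: there exists $z_0\in\S^1$ with $\gamma(z_0)\in B_{2d}(\gamma(x_0))$ but $\dist(\gamma(z_0),L(\gamma(x_0),\gamma(y_0)))\geq \epsilon d$. After translating so $\gamma(x_0)=0$ and rotating so $L$ is the horizontal axis through $\gamma(y_0)=(d,0)$, we have $\gamma(z_0)=(a,b)$ with $|b|\geq \epsilon d$ and $a^2+b^2\leq 4d^2$. The goal is to show that the ``detour'' imposed on the arc-length curve $\gamma$ by the triangle with vertices $\gamma(x_0),\gamma(y_0),\gamma(z_0)$ forces a quantitative lower bound on the local energy in \eqref{frt01se5t639}.

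The geometric heart of the argument is to locate two sets of parameters: a ``base'' set $A\subset\S^1$ of $x$ with $\gamma(x)$ clustered near $\gamma(x_0)$, and a ``test'' set $B\subset B_{\rho}(x_0)$ of $y$ with $\gamma(y)$ clustered along the portion of $\gamma(\S^1)$ that must climb to height comparable to $\epsilon d$ in order to reach $\gamma(z_0)$. Both sets should have one-dimensional measure $\gtrsim d$, and should be constructed so that for every $(x,y)\in A\times B$ one has simultaneously
\[
|\gamma(y)-\gamma(x)|\lesssim d\qquad\text{and}\qquad \left|\langle n(\gamma(y)),\,\gamma(y)-\gamma(x)\rangle\right|\gtrsim \epsilon d.
\]
The lower bound on the normal projection will come from the fact that along an arc of length $\gtrsim d$ around $\gamma(z_0)$ the curve must change vertical position by $\gtrsim \epsilon d$, so its tangent has average vertical component $\gtrsim \epsilon$ and, consequently, a positive-measure subset of tangents whose corresponding normals have horizontal component of size $\gtrsim \epsilon$; combined with the vertical gap $\gtrsim \epsilon d$ between $\gamma(y)$ and $\gamma(x)$ (which is forced because $\gamma(x)$ sits close to $L$ while $\gamma(y)$ sits near $\gamma(z_0)$), this gives the required lower bound.

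Once $A$ and $B$ are in hand, the inner integral is bounded below by
\[
\int_{B_\rho(x_0)} \frac{|\langle n(\gamma(y)),\gamma(y)-\gamma(x)\rangle|}{|\gamma(y)-\gamma(x)|^{2+s}}\,dy \;\gtrsim\; \int_B \frac{\epsilon d}{d^{2+s}}\,dy \;\gtrsim\; \epsilon\, d^{-s}\qquad \text{for every } x\in A.
\]
Raising to the power $\tfrac{1}{s}$ and integrating over $A$ (of measure $\gtrsim d$) yields
\[
\int_{\S^1}\brac{\int_{B_\rho(x_0)} \frac{|\langle n(\gamma(y)),\gamma(y)-\gamma(x)\rangle|}{|\gamma(y)-\gamma(x)|^{2+s}}\,dy}^{\frac{1}{s}}\,dx \;\gtrsim\; \epsilon^{1/s},
\]
which, for $\delta$ chosen sufficiently small depending on $\epsilon$ (and on $s$ through the constants), contradicts \eqref{frt01se5t639}. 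The scaling-invariance of the final inequality is consistent with criticality of the energy, which is exactly what one expects.

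The main obstacle is the geometric construction of the sets $A$ and $B$: one must verify quantitatively that the unit-speed parametrization, forced by the three-point configuration to make a detour of height $\gtrsim \epsilon d$, does so along an arc whose tangents rotate by a controlled amount — so that the normals remain in a fixed half-plane on a $\gtrsim d$-portion of the arc. This is essentially the content of \cite[Lemma~2.1, Lemma~2.3]{SvdM12} and its adaptation \cite[Lemma~4.8]{BRSV21}; the only point that needs checking here is that the nonlocal mean curvature kernel $\langle n(\gamma(y)),\gamma(y)-\gamma(x)\rangle/|\gamma(y)-\gamma(x)|^{2+s}$ is handled in the same way as the tangent-point kernel, which it is because both integrands probe precisely the off-tangent displacement along the chord.
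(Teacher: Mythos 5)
The paper does not supply a written proof of this lemma; it explicitly omits it, stating that the argument is ``almost verbatim'' to \cite[Lemma 4.8]{BRSV21}, which in turn adapts \cite[Lemmas 2.1, 2.3]{SvdM12}. Your proposal follows the same general template — argue by contradiction, locate sets $A$ (for the outer variable) and $B\subset B_\rho(x_0)$ (for the inner variable) of one-dimensional measure $\gtrsim d$, bound the kernel from below on $A\times B$, and observe that the resulting scale-invariant lower bound $\gtrsim \epsilon^{1/s}$ contradicts \eqref{frt01se5t639} once $\delta$ is small. The scaling bookkeeping you carry out at the end is correct and is the only part of the BRSV21 argument that genuinely needs re-checking when the tangent-point exponent is replaced by the $\frac{1}{s}$-th power of the $\frac{1}{|\cdot|^{2+s}}$-kernel, so it is good that you verified it.

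However, there is a concrete flaw in the heuristic you give for the key lower bound $|\langle n(\gamma(y)),\gamma(y)-\gamma(x)\rangle|\gtrsim\epsilon d$. You argue that along the ``climbing'' arc the normal has horizontal component $\gtrsim\epsilon$, and that the vertical gap between $\gamma(y)$ and $\gamma(x)$ is $\gtrsim \epsilon d$, and that these two facts combine to give the lower bound. They do not: in the inner product $n_1(\gamma_1(y)-\gamma_1(x)) + n_2(\gamma_2(y)-\gamma_2(x))$, the horizontal normal component pairs with the \emph{horizontal} displacement and the vertical gap pairs with the \emph{vertical} normal component. Both paired quantities can simultaneously be small: if $\gamma(z_0)$ lies essentially directly above $\gamma(x_0)$ and the climbing arc has nearly vertical tangent, then $\gamma(y)-\gamma(x)$ is nearly vertical while $n(\gamma(y))$ is nearly horizontal, and $\langle n(\gamma(y)),\gamma(y)-\gamma(x)\rangle$ degenerates. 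The actual Strzelecki--von der Mosel argument avoids this by a more careful case analysis on parameter intervals around all three of $x_0$, $y_0$, $z_0$ together with a pigeonhole argument on the direction of the tangent, which is precisely what you identify as ``the main obstacle'' and defer to the references. So your outline is structurally aligned with the cited proof and correctly handles the new exponent, but the specific mechanism you propose for producing the lower bound on the normal projection does not work as stated and would have to be replaced by the genuine S-vdM construction.
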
 


As a consequence, see \cite[Remark after Lemma 2.3 and Theorem 1.4]{SvdM12} we have in particular obtain
\begin{proposition}[Strzelecki-von der Mosel] \label{topolmfd}
Let $s\in (0,1)$. Assume $\gamma\in {Lip}(\S^1,\R^2)$, $\left|\gamma'\right|\equiv 1$ a.e. in $\S^1$.

If 
\[
\int_{\S^1}\brac{\int_{\S^1}\frac{\left|\langle n(\gamma(y)), \gamma(y)-\gamma(x)\rangle\right|}{\left|\gamma(y)-\gamma(x)\right|^{2+s}}\,dy}^{\frac{1}{s}} \,dx<+\infty
\]
and if $\gamma$ is convex in the sense of \Cref{def:convexcurve} and has finite Willmore energy, then 
$\gamma(\S^1)$ is a topological one-manifold.
\end{proposition}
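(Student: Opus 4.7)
The plan is to reduce the claim to a local-to-global argument based on \Cref{S-VdM}. Fix $\eps>0$ small (to be chosen) and let $\delta=\delta(\eps)>0$ be the constant supplied by \Cref{S-VdM}. The first step is to localize the energy: for every $x_0\in\S^1$ I would argue that
\[
F(x_0,\rho)\defeq \int_{\S^1}\brac{\int_{B_\rho(x_0)}\frac{|\langle n(\gamma(y)),\gamma(y)-\gamma(x)\rangle|}{|\gamma(y)-\gamma(x)|^{2+s}}\,dy}^{1/s}dx \xrightarrow{\rho\to 0^+} 0,
\]
by dominated convergence. Indeed, by the assumed finiteness of the full energy, for a.e.\ $x\in\S^1$ the inner integrand is integrable on $\S^1$, so its integral over the shrinking ball $B_\rho(x_0)$ tends to $0$; the $1/s$-th power of the full inner integral supplies an integrable dominant in $x$. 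Hence one can pick $\rho=\rho(x_0)>0$ with $F(x_0,\rho)<\delta$.

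Given such $\rho$, for any sufficiently small $d_0\in(0,\rho)$ continuity of $\gamma$ produces some $y_0\in\S^1$ with $|\gamma(y_0)-\gamma(x_0)|=d_0$, so \Cref{S-VdM} yields
\[
\gamma(\S^1)\cap B_{2d_0}(\gamma(x_0))\subset L_{\eps d_0}(\gamma(x_0),\gamma(y_0)),
\]
that is, the image near $\gamma(x_0)$ lies in a thin strip of width $2\eps d_0$ around a specific line segment. Using the convexity assumption in the sense of \Cref{def:convexcurve}, I would then argue, in the spirit of \Cref{blatt:bilip}, that this forces $\gamma(\S^1)\cap B_{2d_0}(\gamma(x_0))$ to be the graph of a convex or concave function over the line $L(\gamma(x_0),\gamma(y_0))$: if the image failed to be such a graph, convexity would force a piece of curve whose tangent rotates by more than $\pi/2$, pushing the curve out of the $\eps d_0$-strip once $\eps$ is chosen small enough and contradicting the inclusion above. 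Consequently the orthogonal projection onto $L(\gamma(x_0),\gamma(y_0))$ is a monotone homeomorphism of $\gamma(\S^1)\cap B_{2d_0}(\gamma(x_0))$ onto an open interval, providing the required local chart around $\gamma(x_0)$. Letting $x_0$ vary (and covering $\S^1$ by compactness) then yields the topological $1$-manifold structure on $\gamma(\S^1)$ globally.

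The main technical hurdle is this last step -- upgrading the qualitative ``thin strip'' inclusion to a genuine local graph and ensuring injectivity of the line projection. This is essentially the content of \cite[Theorem 1.4]{SvdM12}; in our setting, convexity plays two roles: it rules out distinct strands of $\gamma(\S^1)$ accumulating inside the strip, and it matches our Willmore energy (without absolute values) with the absolute-value energy appearing in the hypothesis of \Cref{S-VdM}.
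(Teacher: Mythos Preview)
The paper does not actually supply a proof of this proposition; it simply records it as a consequence of \Cref{S-VdM} and refers the reader to \cite[Remark after Lemma 2.3 and Theorem 1.4]{SvdM12} (and, implicitly, to the tangent-point analogue in \cite{BRSV21}). Your outline is therefore already more detailed than what the paper provides, and your Steps~1--2 (dominated convergence to localize the energy below~$\delta$, then invoke \Cref{S-VdM} to get the thin-strip inclusion) are exactly the right opening moves and match the intended argument.

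The one place where your sketch diverges from the cited approach is Step~3. You try to promote the strip inclusion to a local graph by a convexity argument ``in the spirit of \Cref{blatt:bilip}''. That lemma, however, assumes $\gamma\in C^1$ together with a quantitative biLipschitz bound~\eqref{gb'kouno}, neither of which is available here (we only have $\gamma\in\lip$ and the qualitative strip containment). The argument in \cite{SvdM12} that the paper is pointing to does \emph{not} use convexity at all for this step: it iterates the strip lemma at a sequence of scales to show that the image near $\gamma(x_0)$ is a simple arc (the ``propagation'' or ``climbing'' argument of Strzelecki--von der Mosel). This is purely topological and works for any Lipschitz curve satisfying the energy hypothesis with absolute values. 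So your claim that convexity ``rules out distinct strands accumulating inside the strip'' is not needed, and your attempted shortcut via \Cref{blatt:bilip} does not go through as stated. The second role you assign to convexity---identifying the signed Willmore integrand with its absolute value---is correct, but note that the proposition already assumes the absolute-value energy is finite, so this identification is not required for the conclusion either; convexity is largely a standing hypothesis carried along from the rest of the paper rather than an ingredient of this particular proof.
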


\section{Convex curves with finite energy are continuously differentiable}
\label{gfdcvret56}
In this section we prove that all \emph{convex} Lipschitz curves with finite $\mathscr{W}_{s,p}$-energy are actually $C^1$ if $s\geq \frac{1}{p}$ -- a fact that is interesting on its own. We also believe it holds in higher dimensions, but we will not pursue this here any further.

Precisely we have
\begin{theorem}\label{n77754fg} 
Assume $\gamma$ belongs to the set
\[
\gamma \in \tilde{X} = \left\lbrace \gamma\in \lip(\S^1,\R^2)\Bigg|\, \begin{aligned}
&\mbox{$\gamma$ is a homeomorphism}\\ 
&\left|\gamma'\right|\equiv 1 \mbox{  a.e. in  }\S^1\\
&\mbox{$\gamma$ is convex in $\S^1$}\\
\end{aligned} \right\rbrace.
\]
If for some $s \in (0,1)$ and $p \geq \frac{1}{s}$ we have finite energy,
\begin{equation}\label{dcsfght}
\mathscr{W}_{s,p}(\gamma)< +\infty,
\end{equation}
then $\gamma \in C^1$.
\end{theorem}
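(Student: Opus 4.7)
The plan is to argue by contradiction: if $\gamma$ has a ``corner'' at some point $x_0\in\S^1$, I will show the local contribution to $\mathscr{W}_{s,p}(\gamma)^p$ already diverges whenever $p\geq 1/s$. The starting observation is that for a convex arc-length parametrized curve the tangent angle $\theta(x)$, defined by $\gamma'(x)=(\cos\theta(x),\sin\theta(x))$, is monotone non-decreasing modulo $2\pi$ (a standard reformulation of \Cref{def:convexcurve}, also compatible with the uniqueness of the normal at generic points). Hence the one-sided limits $v_1:=\gamma'_-(x_0)$ and $v_2:=\gamma'_+(x_0)$ exist at every $x_0\in\S^1$, and $\gamma\in C^1$ is equivalent to $v_1=v_2$ everywhere. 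Assuming for contradiction that some $x_0$ has $v_1\neq v_2$, convexity rules out cusps and forces $v_2=R_\theta v_1$ for some $\theta\in(0,\pi)$. After translation and rotation I place $\gamma(x_0)=0$, $v_1=(1,0)$, $v_2=(\cos\theta,\sin\theta)$, so that $c_\theta:=\langle v_2^\perp,v_1\rangle=\sin\theta>0$.

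The key step is a one-sided linearization. By definition of $v_1,v_2$, for any $\varepsilon>0$ there is $r>0$ with $|\gamma(x)-(x-x_0)v_1|\leq\varepsilon|x-x_0|$ for $x\in(x_0-r,x_0)$, analogously for $y\in(x_0,x_0+r)$, and $|n(\gamma(y))-v_2^\perp|<\varepsilon$ for a.e.\ such $y$. Choosing $\varepsilon\ll c_\theta$ and using $|\gamma'|\equiv 1$, I then have for $x<x_0<y$ both in $B_r(x_0)$:
\[
\langle n(\gamma(y)),\gamma(y)-\gamma(x)\rangle \,\geq\, \tfrac{1}{2}c_\theta\,(x_0-x), \qquad |\gamma(y)-\gamma(x)|\,\leq\,(y-x_0)+(x_0-x).
\]
Convexity also gives $\langle n(\gamma(y)),\gamma(y)-\gamma(x)\rangle\geq 0$ for \emph{every} $y\in\S^1$, so restricting the inner integral to $y\in(x_0,x_0+r)$ only decreases it.

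Writing $a:=x_0-x$ and $u:=y-x_0$, an elementary one-dimensional computation yields
\[
\int_{\S^1}\frac{\langle n(\gamma(y)),\gamma(y)-\gamma(x)\rangle}{|\gamma(y)-\gamma(x)|^{2+s}}\,dy \,\geq\, \tfrac{1}{2}c_\theta\int_0^{r}\frac{a}{(u+a)^{2+s}}\,du \,\geq\, C_{s,\theta}\,a^{-s}
\]
for all sufficiently small $a>0$ (the $(u+a)^{-1-s}$ contribution at $u=0$ dominates the one at $u=r$). Raising this to the $p$-th power and integrating in $x$,
\[
\mathscr{W}_{s,p}(\gamma)^p \,\geq\, C_{s,\theta}^{p}\int_0^{r/2}a^{-ps}\,da,
\]
which is infinite precisely when $ps\geq 1$, i.e.\ when $p\geq 1/s$. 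This contradicts \eqref{dcsfght} and forces $v_1=v_2$ at every point, hence $\gamma\in C^1$.

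The only genuine obstacle is purely structural: one must carefully set up the one-sided limits and the $o(|x-x_0|)$ approximation at a corner of a convex curve with $|\gamma'|\equiv 1$, together with the fact that the convexity-induced sign of the integrand lets one discard all of $\S^1\setminus B_r(x_0)$ as a lower bound. Once that is done, the remaining one-variable integral diverges exactly at the critical exponent $p=1/s$ (only logarithmically at the endpoint), which is consistent with the subcritical theory of \cite{blatt2023fractional} tolerating corner-type singularities that the critical regime does not.
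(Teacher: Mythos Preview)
Your argument is correct, and it reaches the same blow-up rate $H^s\gtrsim |x-x_0|^{-s}$ as the paper, but by a different route. The paper does not linearize the curve directly; instead it passes to the set $E=\operatorname{co}(\gamma(\S^1))$, constructs for each nearby point $\gamma(x)$ an explicit wedge-shaped barrier $E_x\supset E$ touching $\partial E$ at $\gamma(x)$ (\Cref{nscefg}), and then invokes the maximum principle for the nonlocal mean curvature (\Cref{milan}) together with the exact computation for the wedge (\Cref{lm:barrierestimates}) to get $H^s_{\partial E}(\gamma(x))\geq H^s_{\partial E_x}(\gamma(x))\gtrsim |x-x_0|^{-s}$. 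Your approach replaces this geometric comparison by the observation that convexity makes the inner integrand nonnegative, so one may keep only $y\in(x_0,x_0+r)$, and that for such $y$ the tangent is essentially $v_2$ while $\gamma(x_0)-\gamma(x)$ is essentially $a\,v_1$; the resulting one-variable integral is elementary. The barrier argument is more robust---it generalizes to higher codimension and to the set formulation $\hat{\mathscr{W}}_{s,p}$ without needing one-sided derivatives of a parametrization---whereas your argument is shorter and avoids both \Cref{salame} and the maximum principle entirely.

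One small point worth making explicit in your write-up: the inequality $\langle n(\gamma(y)),\gamma(y)-\gamma(x)\rangle\ge\tfrac12 c_\theta a$ for \emph{all} $u\in(0,r)$ (not just $u\lesssim a$) is cleanest if you split $\gamma(y)-\gamma(x)=(\gamma(y)-\gamma(x_0))+(\gamma(x_0)-\gamma(x))$ and note that convexity already gives $\langle n(\gamma(y)),\gamma(y)-\gamma(x_0)\rangle\ge 0$, so the only term to estimate is $\langle n(\gamma(y)),\gamma(x_0)-\gamma(x)\rangle=a\langle v_2^\perp,v_1\rangle+O(\varepsilon a)$, which is independent of $u$. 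Without this, the error from $\gamma(y)\approx uv_2$ is of order $\varepsilon(u+a)$ and could swamp $c_\theta a$ when $u\gg a$.
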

Let us stress that convexity is absolutely crucial in the assumption of \Cref{n77754fg} -- as evinced by \Cref{la:energyspace}.

Of course, \Cref{n77754fg} can be formulated for sets:
\begin{theorem}
Let $E$ be a bounded and convex set with nonempty interior. If for some $s\in (0,1)$ and some  $p\geq \frac{1}{s}$ it holds  
\begin{equation*}
\hat{\mathscr{W}}_{s,p}(E)<+\infty.
\end{equation*}
then $\partial E$ is  $C^1$.
\end{theorem}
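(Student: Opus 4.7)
The strategy is to reduce this set-level statement to the curve-level result \Cref{n77754fg}. First, since $E$ is bounded, convex, and has nonempty interior, $\partial E$ is a convex Jordan curve of finite length $L$ (a well-known consequence of convexity in $\R^2$, which also forces local graph representations as Lipschitz convex functions). Parametrize $\partial E$ by arclength, rescaled to $\S^1$, and orient it counterclockwise. This yields a Lipschitz homeomorphism $\gamma : \S^1 \to \R^2$ with $\left|\gamma'\right|\equiv 1$ a.e.\ and $\gamma$ convex in the sense of \Cref{def:convexcurve}, by choosing $\nu_x$ to be the outer unit normal of the supporting line to $E$ at $\gamma(x)$ (whose existence is classical for convex bodies). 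In other words, $\gamma \in \tilde{X}$.

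Next, the goal is to show that the set-energy and the curve-energy coincide up to a constant, i.e.\
\[
\hat{\mathscr{W}}_{s,p}(\partial E) \aeq_{s} \mathscr{W}_{s,p}(\gamma).
\]
This follows directly from \Cref{salame}: for a.e.\ $x \in \S^1$ we have the pointwise identity
\[
H^s_{\partial E}(\gamma(x)) = \frac{2}{s} \int_{\S^1} \frac{\langle n(\gamma(y)), \gamma(y)-\gamma(x)\rangle}{\left|\gamma(y)-\gamma(x)\right|^{2+s}}\, dy.
\]
Crucially, by convexity of $\gamma$ the integrand $\langle n(\gamma(y)),\gamma(y)-\gamma(x)\rangle$ is nonnegative, so no issue with the absolute value arises when raising to the $p$-th power. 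Using $\left|\gamma'\right|\equiv 1$ and changing variables from the arclength parameter to $\partial E$ (with the area measure $d\sigma^1$), the $L^p$ norm of $H^s_{\partial E}$ on $\partial E$ translates precisely into the double integral defining $\mathscr{W}_{s,p}(\gamma)$, up to a multiplicative constant depending only on $s$ and $p$.

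Consequently, $\hat{\mathscr{W}}_{s,p}(E) < +\infty$ yields $\mathscr{W}_{s,p}(\gamma) < +\infty$, and we are in the hypotheses of \Cref{n77754fg}. That theorem then gives $\gamma \in C^1(\S^1,\R^2)$, which means $\partial E$ is a $C^1$ curve. The only subtlety worth flagging is verifying that $H^s_{\partial E}$ exists a.e.\ on $\partial E$ so that the energy is literally given by the integral rather than the conventional value $+\infty$; but this is exactly what \Cref{salame} ensures under our convexity and Lipschitz hypotheses. All the real work is therefore already packaged in \Cref{n77754fg} and \Cref{salame}, and no new argument is required here.
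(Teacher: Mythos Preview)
Your proposal is correct and follows exactly the route the paper intends: the paper presents this set-level theorem merely as a reformulation of \Cref{n77754fg} and does not supply a separate proof, and your reduction---parametrize $\partial E$ by arclength to land in $\tilde{X}$, invoke \Cref{salame} to identify $\hat{\mathscr{W}}_{s,p}(E)$ with $\mathscr{W}_{s,p}(\gamma)$ up to a constant, then apply \Cref{n77754fg}---is precisely what is needed. The only cosmetic point is that $|\gamma'|\equiv 1$ on $\S^1$ forces the circumference of $\S^1$ to equal the length of $\partial E$, which the paper handles by allowing $\S^1$ of arbitrary radius (cf.\ the rescaling $\gamma_\rho:\rho\S^1\to\R^2$ in the introduction); this is a harmless convention and does not affect the argument.
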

  
In the proof below observe that the $C^1$-control is purely qualtitative, and we get no estimate whatsoever. This means in particular that when we consider minimizing sequences, we do not seem to get any uniform $C^1$-control.
  
It is also worth pointing out that this result is false in the supercritical regime (namely $p<1/s$): then it is possible to construct an example of a homeomorphism $\gamma\in Lip(\S^1,\R^2)$ that is a parametrization of the boundary of a convex set, which has finite $\mathscr{W}_{s,p}$ energy, but it is not $C^1$. Cf. \Cref{supercrtitcal}.

To prove \Cref{n77754fg} we will first need the following preliminary results. 

\begin{lemma}\label{hbcerdlot}
Let $\gamma\in Lip(\S^1,\R^2)$ be a homeomorphism satisfying the convexity assumption~\eqref{fcdghte} in $\S^1$  and such that $|\gamma'|\equiv 1$ a.e. in $\S^1$. Then, if $\gamma\notin C^1(\S^1,\R^2)$ there exists some $x_0\in\S^1$ and $v\neq w \in \S^1$ such that 
\begin{equation*}
\langle v, \gamma(x_0)-\gamma(x)\rangle\geq 0\quad \mbox{and}\quad \langle w,\gamma(x_0)-\gamma(x)\rangle\geq 0 \quad \forall x\in\S^1.
\end{equation*} 
\end{lemma}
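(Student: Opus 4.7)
The plan is to prove the contrapositive: if for every $x_0 \in \S^1$ the vector $\nu_{x_0}$ of \eqref{fcdghte} is unique, then $\gamma \in C^1(\S^1,\R^2)$. To this end, for each $x \in \S^1$ set
\[
V_x := \{\nu \in \S^1 : \langle \nu, \gamma(x) - \gamma(y) \rangle \geq 0 \text{ for every } y \in \S^1\},
\]
which is nonempty by \eqref{fcdghte}. Under the contrapositive assumption, $V_x = \{\nu_x\}$ is a singleton for every $x$, and we aim to deduce $\gamma \in C^1$.

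The argument then proceeds in three steps. \textbf{Step 1 (continuity of $\nu$).} Given $x_n \to x$, any cluster point $\tilde \nu$ of $\{\nu_{x_n}\} \subset \S^1$ inherits the defining inequality in the limit, $\langle \tilde \nu, \gamma(x) - \gamma(y)\rangle \geq 0$, by continuity of $\gamma$. Hence $\tilde \nu \in V_x = \{\nu_x\}$, so the whole sequence converges to $\nu_x$; the map $x \mapsto \nu_x$ is continuous. \textbf{Step 2 (identification a.e.).} At every $x$ where $\gamma$ is classically differentiable with $|\gamma'(x)|=1$, plugging $y = x \pm t$ into the defining inequality for $V_x$ and letting $t\to 0^+$ forces $\langle \nu_x, \gamma'(x)\rangle = 0$. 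This gives $\nu_x = \pm \gamma'(x)^\perp$; the sign is pinned by the counterclockwise convention \eqref{conclock} (this is essentially the content of \Cref{uniqnorma}). \textbf{Step 3 (upgrade to $C^1$).} Step 1 furnishes a continuous function $\tilde \gamma'(x) := -\nu_x^\perp$ on $\S^1$, and by Step 2 this agrees with $\gamma'$ almost everywhere. Since $\gamma$ is Lipschitz, $\gamma(x_1) - \gamma(x_0) = \int_{x_0}^{x_1} \gamma'(s)\,ds = \int_{x_0}^{x_1} \tilde \gamma'(s)\,ds$; the right-hand side is $C^1$ in $x_1$ with derivative $\tilde \gamma'(x_1)$, so $\gamma \in C^1(\S^1,\R^2)$.

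The main technical point is Step 1, whose continuity conclusion leans heavily on the uniqueness hypothesis: without uniqueness, one only has upper semicontinuity of the set-valued map $x \mapsto V_x$, which does not suffice to deduce pointwise convergence of the $\nu_{x_n}$. The remaining steps are more routine: Step 2 is the standard fact that the only supporting normal at a differentiability point of a convex curve is the geometric outer normal, and Step 3 is the classical upgrade of a Lipschitz function with an a.e.-continuous derivative to a $C^1$ function.
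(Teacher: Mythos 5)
Your proof is correct and follows essentially the same route as the paper's: prove the contrapositive, establish continuity of $x\mapsto\nu_x$ from compactness of $\S^1$ plus the uniqueness hypothesis, identify $\nu_x=\pm\gamma'(x)^\perp$ a.e.\ via \Cref{uniqnorma}, and conclude $\gamma\in C^1$ since $\gamma'$ agrees a.e.\ with a continuous map. The only cosmetic differences are that you phrase continuity directly via cluster points where the paper argues by contradiction, and you spell out the final Lipschitz--to--$C^1$ upgrade through the integral identity, which the paper leaves implicit.
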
 

\begin{proof}
Assume to the contrary that for all $x_0\in\S^1$ 
\begin{equation}\label{gvfloiny}
\mbox{there exists a unique } \nu_{x_0}\in\S^1 \text{ such that } \langle \nu_{x_0},\gamma(x_0)-\gamma(x)\rangle\geq 0
\end{equation}
for all $x\in\S^1$.

Set $v(x_0) := \nu_{x_0}$. We claim that
\begin{equation}\label{cponti}
v\in\ C(\S^1,\S^1).
\end{equation}
If this was not the case, by compactness of the target $\S^1$, there must exists some $v_1 \neq v_2 \in \S^1$, a point $t_0\in\S^1$, and two sequences $t_n, \tilde{t}_n\in \S^1$ such that $t_n,\tilde{t}_n\to t_0$, but
\begin{equation*}
\lim_{n}v(t_n)=v_1\neq v_2=\lim_{n} v(\tilde{t}_n).
\end{equation*} 
According to~\eqref{gvfloiny}
\begin{equation*}
\langle v_{t_n}, \gamma(t_n)-\gamma(y)\rangle \geq 0\quad\mbox{and}\quad \langle v_{\tilde{t}_n},\gamma(\tilde{t}_n)-\gamma(y)\rangle\geq 0\quad\mbox{for all }y\in\S^1.
\end{equation*} 
Taking the limit for $n\to+\infty$ and using continuity of $\gamma$, we find
\begin{equation*}
\langle v_1,\gamma(t_0)-\gamma(y)\rangle\geq 0\quad\mbox{and}\quad \langle v_2,\gamma(t_0)-\gamma(y)\rangle\geq 0\quad\mbox{for all } y\in\S^1,
\end{equation*}
with $v_1\neq v_2$. This is in contradiction to the uniqueness of $\nu_{t_0}$ in \eqref{gvfloiny}.

That is, if \eqref{gvfloiny} was true, then \eqref{cponti} is true. However, up to replacing $\gamma(\cdot)$ with $\gamma(-\cdot)$, for a.e. $x\in\S^1$ we have that
\begin{equation*}
\gamma'(x)^\perp=\nu(x),
\end{equation*} 
see Corollary~\ref{uniqnorma}. This means that $\gamma'$ coincides with a continuous function a.e. and therefore, it is continuous -- if \eqref{gvfloiny} was true.

Thus, if $\gamma \not \in C^1$, \eqref{gvfloiny} cannot be true, and since by \eqref{gvfloiny} there exists at least one $v$, the logical negation of \eqref{gvfloiny}  implies that there must be at least two point $v,w \in \S^1$, and we can conclude.
\end{proof}

\begin{proposition}\label{nscefg}
Let $\gamma\in Lip(\S^1,\R^2)$ be a homeomorphism satisfying the convexity assumption~\eqref{fcdghte} in $\S^1$, and such that $\left|\gamma'\right|\equiv 1$ a.e. in $\S^1$. Also, assume that there exists $x_0\in \S^1$ and two unit vectors $v,w\in \S^1$ satisfying
\begin{equation}\label{dckshfgtr01}
v \neq w, \quad \langle v,\gamma(x_0)-\gamma(y)\rangle\geq 0, \quad\mbox{and}\quad \langle w,\gamma(x_0)-\gamma(y)\rangle \geq 0,  \quad \forall y \in \S^1.
\end{equation} 
Then, if we define the convex hull of $\gamma(\S^1)$, $E:=\mbox{co}(\gamma(\S^1))$, for every  $p\geq \frac{1}{s}$ it holds
\begin{equation}\label{gvdcrey6543}
\lim_{\varepsilon\to 0^+} \int_{\S^1\setminus B_{\varepsilon}(x_0)} \left|H_{\partial E}^s(\gamma(x))\right|^p\,dx=+\infty.
\end{equation}
\end{proposition}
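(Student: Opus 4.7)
The plan is to show that at a corner point $\gamma(x_0)$ the nonlocal mean curvature blows up like $|x-x_0|^{-s}$ as $x \to x_0$, so that $|H^s_{\partial E}(\gamma(\cdot))|^p$ fails to be integrable near $x_0$ whenever $ps \geq 1$. First translate so that $\gamma(x_0)=0$. Convexity combined with $|\gamma'| \equiv 1$ a.e.\ makes the tangent angle a monotone function of arc length, so the one-sided tangent limits $\tau_\pm := \lim_{y \to x_0^\pm} \gamma'(y)$ exist in $\S^1$. The hypothesis~\eqref{dckshfgtr01}, together with Corollary~\ref{uniqnorma}, forces $\tau_+ \neq \tau_-$; and since $\gamma(\S^1)$ bounds a convex body with nonempty interior, the two supporting normals $v, w$ cannot be antipodal, so $\tau_+ \neq -\tau_-$ as well. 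Hence $\tau_+$ and $\tau_-$ are linearly independent, and $c_0 := -\langle \tau_-^\perp, \tau_+\rangle$ is nonzero. Passing to the limit $y \to x_0^-$, $x \to x_0^+$ in the a.e.\ inequality $\langle n(\gamma(y)),\gamma(y)-\gamma(x)\rangle \geq 0$ (valid for convex $\gamma$, cf.\ the remark after \Cref{def:convexcurve}) gives $c_0 \geq 0$, hence $c_0 > 0$.

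Next, apply \Cref{salame} to write
\begin{equation*}
H^s_{\partial E}(\gamma(x)) = \frac{2}{s}\int_{\S^1}\frac{\langle n(\gamma(y)), \gamma(y)-\gamma(x)\rangle}{|\gamma(y)-\gamma(x)|^{2+s}}\, dy,
\end{equation*}
whose integrand is nonnegative. Restrict the integral to $y \in (x_0-\delta_0,x_0)$ with $x \in (x_0,x_0+\delta_0)$ for $\delta_0$ small, and set $u := x-x_0$, $v := y-x_0$. The one-sided asymptotics
\begin{equation*}
\gamma(y) = v\tau_- + o(|v|), \qquad \gamma(x) = u\tau_+ + o(u), \qquad n(\gamma(y)) = \tau_-^\perp + o(1),
\end{equation*}
combined with $c_0 > 0$, give (after shrinking $\delta_0$ if necessary)
\begin{equation*}
\langle n(\gamma(y)), \gamma(y)-\gamma(x)\rangle \geq \frac{c_0}{2}\,u, \qquad |\gamma(y)-\gamma(x)|^2 \leq C\,(u^2 + v^2).
\end{equation*}
Substituting $v = -u\,t$ and using $\int_0^1 (1+t^2)^{-(2+s)/2}\,dt > 0$, we obtain
\begin{equation*}
H^s_{\partial E}(\gamma(x)) \geq c\, u^{-s} = c\,(x-x_0)^{-s} \quad \text{for } x \in (x_0, x_0 + \delta_0/2).
\end{equation*}

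Inserting this lower bound yields
\begin{equation*}
\int_{\S^1\setminus B_\varepsilon(x_0)} |H^s_{\partial E}(\gamma(x))|^p\, dx \geq c^p\int_\varepsilon^{\delta_0/2} t^{-ps}\, dt,
\end{equation*}
and the right-hand side diverges as $\varepsilon \to 0^+$ precisely when $ps \geq 1$, establishing~\eqref{gvdcrey6543}. The main technical obstacle is justifying the expansions $\gamma(y) = v\tau_- + o(|v|)$ and $n(\gamma(y)) = \tau_-^\perp + o(1)$ as $y \to x_0^-$ without any a priori $C^1$ control on $\gamma$. This is handled via the convexity hypothesis, which forces the tangent angle of $\gamma$ to be a monotone function of arc length and hence one-sided pointwise convergent everywhere; the fundamental theorem of calculus then transfers this to the required expansion of $\gamma$ itself.
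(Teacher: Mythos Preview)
Your argument is correct and reaches the same blow-up rate $H^s_{\partial E}(\gamma(x)) \gtrsim |x-x_0|^{-s}$ as the paper, but by a genuinely different route. The paper first writes $\gamma$ locally as the graph of a convex function via the implicit function theorem (\Cref{ImFuTh}), then for each $x$ near $x_0$ builds an explicit piecewise-linear barrier set $E_x \supset E$ touching $\partial E$ at $\gamma(x)$, and finally invokes the maximum principle (\Cref{milan}) together with an explicit computation of $H^s_{\partial E_x}$ (\Cref{lm:barrierestimates}) to obtain the lower bound. You instead work directly with the surface-integral representation from \Cref{salame}: since the integrand is nonnegative by convexity, you restrict the $y$-integral to one side of the corner and read off the $u^{-s}$ behaviour from first-order expansions of $\gamma$ and $n(\gamma)$ about $x_0$. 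Your approach is more elementary and avoids the barrier machinery entirely; the paper's approach is more geometric and perhaps more robust to higher-dimensional generalizations. One small point worth making explicit in your write-up: the claimed inequality $\langle n(\gamma(y)),\gamma(y)-\gamma(x)\rangle \geq \tfrac{c_0}{2}u$ uniformly over $y\in(x_0-\delta_0,x_0)$ requires splitting the left side as $\langle n(\gamma(y)),\gamma(y)-\gamma(x_0)\rangle + \langle n(\gamma(y)),\gamma(x_0)-\gamma(x)\rangle$ and using convexity to see that the first term is nonnegative; only then does the error in the second term become $u\cdot o_{|v|}(1)+o_u(u)$, which can be absorbed by shrinking $\delta_0$.
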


\begin{proof}[Proof of \Cref{nscefg}]
Thanks to Theorem~\ref{ImFuTh} we have that there exists some neighbourhood $U_{x_0}\subset\S^1$ of $x_0$ and a convex function $f$ such that up to rotations
\begin{equation}\label{handf}
\gamma(t)=(\gamma_1(t),f(\gamma_1(t)))^T\quad\mbox{for all}\quad t\in U_{x_0}.
\end{equation}

We now find now two vectors $v'$ and $w'$ in $\S^1$ that still satisfy \eqref{dckshfgtr01} with better properties.
For two sequences $x_n, \tilde{x}_n\in \gamma_1(U_{x_0})$ such that $x_n\searrow \gamma_1(x_0)$,  $\tilde{x}_n\nearrow \gamma_1(x_0)$ and $f$ is differentiable in $x_n$ and $\tilde{x}_n$ we define 
\begin{equation*}
\begin{split}
v'&:=\lim_{n}\frac{1}{\sqrt{1+(f'(x_n))^2}}  \begin{pmatrix}
f'(x_n)\\
-1
\end{pmatrix},\\
w'&:= \lim_{n}\frac{1}{\sqrt{1+(f'(\tilde{x}_n))^2}}  \begin{pmatrix}
f'(\tilde{x}_n)\\
-1
\end{pmatrix}.
\end{split}
\end{equation*}
The convergence of the two limits above is guaranteed by the monotonicity of $f'$ wherever it exists, and independent of the particular sequences chosen.

As a consequence of Theorem~\ref{ImFuTh}, $v'$ and $w'$ almost satisfy \eqref{dckshfgtr01} -- however it could be that $v'=w'$. But indeed, if we had $v'=w'$, then it follows that $\lim_{n}f'(x_n)=\lim_{n}f'(\tilde{x}_n)$. In this case by convexity of $f$ we deduce for every $n$ that 
\begin{equation*}
f'(\tilde{x}_n)\leq \frac{f(\tilde{x}_n)-f(\gamma_1(x_0))}{\tilde{x}_n-\gamma_1(x_0)}\leq \frac{f(x_n)-f(\gamma_1(x_0))}{x_n-\gamma_1(x_0)}\leq f'(x_n).
\end{equation*}
Taking the limits, we obtain that $f$ is differentiable in $\gamma_1(x_0)$. As a consequence of this and Theorem~\ref{ImFuTh} we deduce that $\gamma$ is differentiable in $x_0$. This and \Cref{la:vxinconvexity} imply that there is a unique unit vector $v$ satisfying~\eqref{fcdghte} in $x_0$ -- but this is contradiction to \eqref{dckshfgtr01}. So indeed $v'\neq w'$. So, w.l.o.g. $v=v'$ and $w=w'$.

Without loss of generality we can assume that for some $\theta_0\in (\pi,2\pi)$ it holds $w=R_{\theta_0} v$, where $R_{\theta_0}$ denotes the counterclockwise rotation by $\theta_0$. The case $\theta_0\in (0,\pi)$ follows from the first one by renaming $v$ and $w$. Also, note that $\theta_0=\pi$ is not allowed since equation~\eqref{dckshfgtr01} would give that $\gamma(\S^1)$ is a line, contradicting the hypothesis that $\gamma$ is a homeomorphism.  


\underline{Assume first $\theta_0\in \left(\frac{3}{2}\pi,2\pi\right)$}, then up to a translation, rotation and by possibly choosing $U_{x_0}$ smaller, we obtain 
 for some $m>0$ 
\begin{equation}\label{TnLrPgDr}
\gamma(x_0)=0,\quad v=(0,-1)^T\quad\mbox{and}\quad w=\frac{1}{\sqrt{1+m^2}}(-m,-1)^T. 
\end{equation}
Note that as a consequence of~\eqref{TnLrPgDr} and~\eqref{dckshfgtr01} we have that 
\begin{equation}\label{Sessagesimale}
\gamma_2(y)\geq 0\quad\mbox{for all}\quad y\in\S^1.
\end{equation} 

Now, we claim that for a.e. $x\in V_{x_0}:=U_{x_0}\cap \left\lbrace x > x_0 \right\rbrace$ there exists some convex set $E_{x}\subset \R^2$, such that 
\begin{equation}\label{STEP1}
E\subset E_x\quad\mbox{and}\quad  \gamma(x) \in \partial E\cap \partial E_x,
\end{equation}
and furthermore, $H_{\partial E_x}^s$ satisfies
\begin{equation}\label{STEP2}
H_{\partial E_x}^s(\gamma(x))\gtrsim_{m,s} \frac{1}{\left|x-x_0\right|^s}.
\end{equation}
If claim~\eqref{STEP1} and~\eqref{STEP2} are true, then we can apply the Maximum Principle for the nonlocal mean curvature, Proposition~\ref{milan}, to obtain for a.e. $x\in V_{x_0}$ that
\begin{equation*}
H_{\partial E}^s(\gamma(x))\geq H_{\partial E_x}^s(\gamma(x))\gtrsim_{m,s} \frac{1}{\left|x-x_0\right|^s}.
\end{equation*}
In particular, from this last equation we evince that 
\begin{equation*}
\begin{split}
\int_{U_{x_0}\setminus B_{\epsilon}(x_0)}\left|H_{\partial E}^s(\gamma(x))\right|^p\,dx \gtrsim_{m,s} \int_{V_{x_0}\setminus B_\epsilon(x_0)} \frac{dx}{\left|x-x_0\right|^{ps}}.
\end{split}
\end{equation*}
Recalling that $p\geq\frac{1}{s}$, and taking the limit for $\epsilon\to 0^+$ we obtain~\eqref{gvdcrey6543}.

\textbf{Construction of the set $E_x$}:\\
To find a set $E_x$ with~\eqref{STEP1} and~\eqref{STEP2} we recall that for every $x\in V_{x_0}$ there exists some $\nu_x\in \S^1$ such that 
\begin{equation}\label{fdcbhuop}
\langle \nu_x, \gamma(x)-\gamma(y)\rangle \geq 0\quad\mbox{for all}\quad y\in \S^1. 
\end{equation}
Moreover, since $\gamma$ is Lipschitz, the set 
\begin{equation*}
S:=\left \{ x\in V_{x_0}|\, \gamma \text{ is differentiable in $x$} \right \}   
\end{equation*} 
is such that  $\sigma^1\left(V_{x_0}\setminus S\right)=0$. 
From \Cref{la:vxinconvexity}, we deduce that for $x\in S$, the normal $\nu_x\in \S^1$ is uniquely determined.
We adhere to the notation $\nu_x=({\nu_x}_1,{\nu_x}_2)^T$. Also, thanks to \Cref{ImFuTh} we have for every $x\in S$  
\begin{equation*}
\nu_x=\frac{1}{\sqrt{1+f'(\gamma_1(x))}}\begin{pmatrix}
f'(\gamma_1(x))\\
-1
\end{pmatrix}.
\end{equation*} 
Hence, under the assumptions provided in~\eqref{TnLrPgDr}, it follows from this last equation that
\begin{equation}\label{trdghui87}
-1\leq {\nu_x}_2 < 0\quad  \mbox{and}\quad {\nu_x}_1\geq 0 \quad \mbox{for all}\quad   x\in S.
\end{equation}

Let $x\in S$. Now, we begin by setting the non negative gradient
\begin{equation*}\label{mi}
m_1:=\frac{-{\nu_x}_1}{{\nu_x}_2}
\end{equation*}
and  
\begin{equation}\label{tildet}
\tilde{t}:=\begin{dcases}
\frac{-\gamma_2(x)+m_1\gamma_1(x)}{m_1}\quad &\mbox{if}\quad m_1>0\\   
1\quad &\mbox{if}\quad m_1=0.
\end{dcases}
\end{equation}
Then, we define the function
\begin{equation}\label{functionfx}
g_x(t):=\begin{dcases}
-mt\quad &\mbox{for}\quad t\in (-\infty,0)\\
0\quad\mbox{for}\quad &\mbox{for}\quad t\in [0,\tilde{t})\\
m_1(t-\tilde{t})\quad &\mbox{for}\quad t\in [\tilde{t},+\infty).
\end{dcases}
\end{equation}
and its  epigraph
\begin{equation*}
E_x:=\left\lbrace (y_1,y_2)\in \R^2|\, y_2\geq g_x(y_1)\right\rbrace.
\end{equation*}
Since $g_x$ is convex, we obtain that  
\begin{equation}\label{convexityred}
E_x\quad\mbox{is convex}.
\end{equation}

\textbf{Proof of~\eqref{STEP1}}:\\
From the fact that
\begin{equation}\label{savoiardo}
g_x(\gamma_1(x))=\gamma_2(x),
\end{equation}
and Proposition~\ref{colgate} it follows that 
\begin{equation}\label{naroba}
\gamma(x)\in\partial E_x\cap \partial E.
\end{equation}
According to equation~\eqref{dckshfgtr01},~\eqref{TnLrPgDr},~\eqref{fdcbhuop} and the fact that $E=\mbox{co}(\gamma(\S^1))$ we have that 
\begin{equation*}
\begin{dcases}
\langle (m,1)^T, P\rangle \geq 0 \quad&\mbox{for all}\quad  P=(P_1,P_2)^T\in E\\
\langle (0,1)^T, P\rangle\geq 0 \quad&\mbox{for all}\quad  P=(P_1,P_2)^T\in E\\
\langle ({\nu_x}_1,{\nu_x}_2)^T, \gamma(x)-P\rangle\geq 0\quad \quad&\mbox{for all} \quad P=(P_1,P_2)^T\in E.
\end{dcases}
\end{equation*}
Writing explicitly the above set of equations we obtain 
\begin{equation}\label{giromondo}
\begin{dcases}
P_2\geq -m P_1  \quad&\mbox{for all}\quad  P=(P_1,P_2)^T\in E\\
P_2\geq 0   \quad&\mbox{for all}\quad  P=(P_1,P_2)^T\in E\\
P_2\geq m_1(P_1-\gamma_1(x))+\gamma_2(x)\quad&\mbox{for all}\quad  P=(P_1,P_2)^T\in E.\\
\end{dcases}
\end{equation}
The set of inequalities in~\eqref{giromondo} guarantees that $g_x(P_1)\leq P_2$ for each $P\in E$, and therefore
\begin{equation}\label{modulare}
E\subset E_x.
\end{equation}
Equations~\eqref{convexityred},~\eqref{naroba} and~\eqref{modulare} yield to~\eqref{STEP1}. 

\textbf{Proof of~\eqref{STEP2}}:\\
To prove~\eqref{STEP2} it is enough to apply Proposition~\ref{lm:barrierestimates} in $x$ with $g_x=g$ and $G=E_x$. By doing so we obtain that 
\begin{equation*}
\begin{split}
 H_{\partial E_x}^s(\gamma(x)) &\gtrsim_{m,s} \frac{1}{\left(\left|g_x(\gamma_1(x))\right|+\left|\gamma_1(x)\right|\right)^s}\\   
&=\frac{1}{\left(\left|\gamma_2(x)\right|+\left|\gamma_1(x)\right|\right)^s}\\
&\geq \frac{1}{2^s \left|x\right|^s}.
\end{split}
\end{equation*}
This concludes the proof of~\eqref{STEP2}. And in turn the case of $\theta_0 \in \left(\frac{3\pi}{2},2\pi\right)$ is concluded.

Now, we discuss \underline{the case $\theta_0\in \left(\pi,\frac{3\pi}{2}\right)$}. In this case, up to translations and rotations of $\gamma(\S^1)$, we can assume that for some $m>0$ it holds that 
\begin{equation}\label{confo54}
\gamma(x_0)=0,\quad v=(0,-1)^T\mbox{  and }w=\frac{1}{\sqrt{1+m^2}}(-m, 1)^T.
\end{equation}
We observe that if we define $\tilde{w}:=\frac{1}{\sqrt{1+m^2}}(-m,-1)^T$, then $\tilde{w}=R_{\tilde{\theta}}v$ for some $\tilde{\theta}\in \left(\frac{3}{2}\pi,2\pi\right)$. 

If we plug in the expression for $w$ as in~\eqref{confo54} in equation~\eqref{dckshfgtr01}, we obtain for every $y\in \S^1$ that
\begin{equation*}
0\leq \left\langle\begin{pmatrix}
m\\
-1
\end{pmatrix},\begin{pmatrix}
\gamma_1(y)\\
\gamma_2(y)
\end{pmatrix}\right\rangle=m\gamma_1(y)-\gamma_2(y).
\end{equation*}   
From this inequality, and the fact that $\gamma_2\geq 0$, we obtain for every $y\in\S^1$
\begin{equation*}
\langle \tilde{w}, (\gamma(x_0)-\gamma(y))\rangle \geq 0.
\end{equation*}
In particular, we have that $v$ and $\tilde{w}$ satisfies~\eqref{dckshfgtr01}, $v\neq \tilde{w}$ and $\tilde{w}=R_{\tilde{\theta}}v$ for some $\tilde{\theta}\in \left(\frac{3}{2}\pi,2\pi\right)$. This means that we recovered the first case, and we conclude. 
\end{proof}

\begin{figure}
\includegraphics[width=300px]{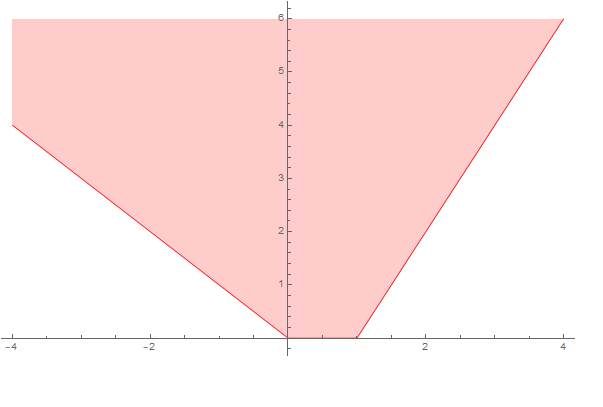}
\caption{\label{polipo} The barrier $G$ from \Cref{lm:barrierestimates}}
\end{figure}

\begin{proposition}\label{lm:barrierestimates}
Let $s\in(0,1)$, $m,\tilde{t}>0$ and $m_1\geq 0$ and consider the function given by  
\begin{equation*}
g(t):=\begin{dcases}
-mt\quad \mbox{for}\quad t\in (-\infty,0)\\
0\quad\mbox{for}\quad t\in [0,\tilde{t})\\
m_1(t-\tilde{t}) \quad\mbox{for}\quad t\in [\tilde{t},+\infty).
\end{dcases}
\end{equation*}
Set $G:=\left\lbrace (t,y)|\, y\geq g(t)  \right\rbrace$, see Figure~\ref{polipo}.

Then for some constant $c_{m,s}>0$ depending on $m$ and $s$, 
\begin{itemize}
\item if $m_1> 0$ for all $x\in (0,\tilde{t})\cup(\tilde{t},+\infty)$ 
\item if $m_1= 0$ for every $x >0$
\end{itemize}
we have 
\begin{equation*}
H_{\partial G}^s((x,g(x)))\geq \frac{c_{m,s}}{\left(\left|g(x)\right|+\left|x\right|\right)^s}.
\end{equation*}
\end{proposition}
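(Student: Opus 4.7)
The plan is to bound $H^s_{\partial G}(z)$ from below at $z = (x, g(x))$ by identifying an explicit triangular subset of $G^c$ of area $\asymp R^2$ lying at distance $\lesssim R$ from $z$, where $R := |x| + |g(x)|$.

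First I would observe that $z$ lies on a linear piece of $\partial G$ (since $x \neq \tilde t$ when $m_1 > 0$, and $g$ has a corner only at $t=0$ when $m_1=0$), so $\partial G$ has a well-defined tangent line there and the principal-value integral defining $H^s_{\partial G}(z)$ converges. Let $H_{\mathrm{tan}}$ denote the upper half-plane bounded by this tangent line. Convexity of $g$ yields $G \subseteq H_{\mathrm{tan}}$, and both have $z$ on their boundary. Because $\partial H_{\mathrm{tan}}$ and $\partial G$ coincide on a neighborhood of $z$, the symmetrization argument of \Cref{milan}, combined with $H^s_{\partial H_{\mathrm{tan}}}(z) = 0$, gives the quantitative identity
\[
H^s_{\partial G}(z) = 2 \int_{H_{\mathrm{tan}} \setminus G} \frac{dy}{|z - y|^{2+s}}.
\]

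Next I would restrict the integral to the triangle $S := \{(t, y) : -R \leq t \leq 0,\ 0 \leq y \leq -mt\}$, which has area $|S| = mR^2/2$. Verifying $S \subseteq H_{\mathrm{tan}} \setminus G$ is the key geometric step: for $t \leq 0$ one has $g(t) = -mt \geq y$, so $(t, y) \notin G$; and the tangent line at $z$ evaluated at $t \leq 0$ equals either $0$ (when $g'(x) = 0$) or $m_1(t - \tilde t) \leq -m_1 \tilde t \leq 0$ (when $g'(x) = m_1$), so $y \geq 0$ places $(t, y) \in H_{\mathrm{tan}}$. For the distance bound I would use $R \geq x$ and $R \geq g(x)$ to estimate, for $(t, y) \in S$, $x - t \leq 2R$ and $|g(x) - y| \leq (1+m)R$, whence $|z - (t, y)| \leq K_m R$ with $K_m := \sqrt{4 + (1+m)^2}$. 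Combining yields
\[
H^s_{\partial G}(z) \geq \frac{2|S|}{(K_m R)^{2+s}} = \frac{m}{K_m^{2+s}} \cdot \frac{1}{R^s},
\]
giving $c_{m,s} = m/K_m^{2+s}$, independent of $\tilde t$ and $m_1$ as required.

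The main obstacle is to verify the displayed identity rigorously in the principal-value sense. The argument proceeds by applying $\lim_{\varepsilon \to 0^+}\int_{|z - y| \geq \varepsilon}$ to both $\chi_{G^c} - \chi_G$ and $\chi_{H_{\mathrm{tan}}^c} - \chi_{H_{\mathrm{tan}}}$, whose pointwise difference equals $2 \chi_{H_{\mathrm{tan}} \setminus G}$ (using $G \subseteq H_{\mathrm{tan}}$); the $\varepsilon \to 0$ limit on the $H_{\mathrm{tan}} \setminus G$ contribution is trivial since that set is positively separated from $z$, and the half-plane integral vanishes for every $\varepsilon>0$ by reflection symmetry across $\partial H_{\mathrm{tan}}$.
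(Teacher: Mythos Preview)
Your argument is correct, and takes a genuinely different route from the paper.

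The paper rewrites $H^s_{\partial G}(z)$ as a one-dimensional boundary integral via \Cref{salamegen} (the curve-integral formula for the nonlocal mean curvature), parametrizes $\partial G$ by arclength, then restricts the integral to the left ray $\{(t,-mt)^\perp u : t<0\}$ where the normal is constant; after estimating $|\mu(t)-z|$ against $|\mu_1(t)-x|$ it computes the resulting one-dimensional integral explicitly.

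You instead stay with the two-dimensional volume integral: comparing $G$ with its tangent half-plane $H_{\mathrm{tan}}$ at $z$ yields $H^s_{\partial G}(z)=2\int_{H_{\mathrm{tan}}\setminus G}|z-y|^{-2-s}\,dy$, and you then exhibit a concrete triangle in $H_{\mathrm{tan}}\setminus G$ of area $\asymp_m R^2$ at distance $\aleq_m R$ from $z$. This is more elementary in that it bypasses the boundary-integral representation (and hence the appendix, \Cref{salamegen}), and in fact your final paragraph shows as a byproduct that the principal value exists, rather than assuming it. The paper's approach, on the other hand, is more in keeping with the curve-analytic perspective used elsewhere in the paper and yields a slightly sharper implicit constant. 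Both produce a bound depending only on $m$ and $s$, independent of $m_1$ and $\tilde t$, as required.
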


\begin{proof}[Proof of Proposition~\ref{lm:barrierestimates}]
Let $x$ be as in the statement. Set $z:=(x,g(x))^T$. Define the unit vectors
\begin{equation*}
u:=\frac{1}{\sqrt{1+m^2}}(1,-m)^T\quad\mbox{and}\quad v:=\frac{1}{\sqrt{1+m_1^2}}(1,m_1)^T.
\end{equation*} 
Then we can parameterize the graph of $g$ using the curve 
\begin{equation*}
\mu(t):=\begin{dcases}
u t\quad\mbox{for}\quad t\in (-\infty,0]\\
(t,0)\quad\mbox{for}\quad t\in [0,\tilde{t})\\
v(t-\tilde{t})+(\tilde{t},0)\quad\mbox{for}\quad t\in [\tilde{t},+\infty).
\end{dcases}
\end{equation*}
We observe that if we denote $\hat{t}:=\frac{g(x)}{u_2}$ then
\begin{equation*}
\mu_2(t)\geq g(x)\quad\mbox{for all}\quad t\in (-\infty,\hat{t}].
\end{equation*}
From this it follows that for every $t\in (-\infty,\hat{t}]$
\begin{equation}\label{eq:202204}
\begin{split}
\left|\mu(t)-z\right|& \leq \left|\mu_1(t)-x\right|+\left|\mu_2(t)-g(x)\right|\\
&=\left|\mu_1(t)-x\right|+\left(\mu_2(t)-g(x)\right)\\
&\leq  \left|\mu_1(t)-x\right|+\left|u_2 t\right|\\
&= \left|\mu_1(t)-x\right|+ \left|-m u_1 t\right|\\
&\leq \left|\mu_1(t)-x\right|+m\left|u_1 t-x\right|\\
&\aleq_{m}\left|\mu_1(t)-x\right|.
\end{split}
\end{equation}

Now, we notice that $g$ admits second order Taylor expansion in $x$. In particular, recalling that $u^\perp:=(u_2,-u_1)^T$, we can use Remark~\ref{salamegen} and~\eqref{eq:202204} to estimate 
\begin{equation*}
\begin{split}
H_{\partial E}^s(z)&=\frac{2}{s}\int_{\R}\frac{\langle n(\mu(t)),\mu(t)-z\rangle }{\left|\mu(t)-z\right|^{2+s}}\,dt\\
&=\frac{2}{s}\int_{-\infty}^{\tilde{t}} \frac{\langle n(\mu(t)),\mu(t)-z\rangle }{\left|\mu(t)-z\right|^{2+s}}\,dt\\
&\geq \frac{2}{s}\int_{-\infty}^0 \frac{\langle n(\mu(t)),\mu(t)-z\rangle}{\left|\mu(t)-z\right|^{2+s}}\,dt\\
&=-\frac{2}{s}\int_{-\infty}^0 \frac{\langle n(\mu(t)),z\rangle}{\left|\mu(t)-z\right|^{2+s}}\,dt\\
&=-\frac{2}{s}\int_{-\infty}^0 \frac{\langle n(\mu(t)),z\rangle}{\left|\mu(t)-z\right|^{2+s}}\,dt\\
&=-\frac{2}{s} \int_{-\infty}^0\frac{ \langle u^\perp , (x,g(x))\rangle}{\left|\mu(t)-z\right|^{2+s}}\,dt \\
&=\frac{2}{s}\frac{1}{\sqrt{1+m^2}} \int_{-\infty}^0\frac{mx+g(x)}{\left|\mu(t)-z\right|^{2+s}}\,dt \\
&\gtrsim_{m,s} \int_{-\infty}^{\hat{t}}\frac{mx+g(x)}{\left|\mu_1(t)-x\right|^{2+s}}\,dt \\
&=_{m,s} \frac{mx+g(x)}{\left|u_1\hat{t}-x\right|^{1+s}}\\
\end{split}
\end{equation*}
Now, we notice that 
\begin{equation*}
\begin{split}
\left|u_1\hat{t}-x\right|&\leq \left|u_1\hat{t}\right|+\left|x\right|\\
&=\frac{\left|g(x)\right|}{m}+\left|x\right|\\ 
&\aleq_{m} \left|g(x)\right|+\left|x\right|.
\end{split}
\end{equation*}
Thus, recalling that $x,g(x)\geq 0$, we have
\begin{equation*}
H_{\partial E}^s(z)\gtrsim_{m,s} \frac{g(x)+mx}{\left|g(x)+x\right|^{1+s}}\geq \frac{c_{m,s}}{\left|g(x)+x\right|^s}.\qedhere
\end{equation*}

\end{proof}

\begin{proof}[Proof of \Cref{n77754fg}]
Let $E:=\mbox{co}(\gamma(\S^1))$. Then, thanks to Proposition~\ref{colgate} we have that $\gamma(\S^1)=\partial E$. Also, using Proposition~\ref{salame} we obtain for a.e. $x\in \S^1$ 
\begin{equation*}
H_{\partial E}^s(\gamma(x))=\frac{2}{s}\int_{\S^1} \frac{\langle n(\gamma(y)),\gamma(y)-\gamma(x)\rangle}{\left|\gamma(y)-\gamma(x)\right|^{2+s}}\,dy,
\end{equation*}
where $n(\gamma(y))=\gamma'(y)^\perp$ for a.e. $y\in\S^1$.

Making use of \Cref{nscefg} and the last identity,  we obtain the existence of some $x_0\in\S^1$ such that 
\begin{equation*}
\begin{split}
\mathscr{W}_{s,p}(\gamma)&=\lim_{\epsilon\to 0^+}\int_{\S^1\setminus B_{\epsilon}(x_0)}\left| \int_{\S^1} \frac{\langle n(\gamma(y)),\gamma(y)-\gamma(x)\rangle }{\left|\gamma(y)-\gamma(x)\right|^{2+s}}\,dy\right|^p\,dx\\
&=c_{s,p}\lim_{\epsilon\to 0^+} \int_{\S^1\setminus B_{\epsilon}(x_0)} \left|H_{\partial E}^s(\gamma(x))\right|^p\,dx\\
&=+\infty,
\end{split}
\end{equation*}
and this concludes the proof of the theorem.
\end{proof}

\begin{remark}\label{supercrtitcal}
It is important to observe that the above result holds only in critical and subcritical regime, $\mathscr{W}_{s,p}(\partial E)$ where $p \geq \frac{1}{s}$.
In the supercritical regime, $p < \frac{1}{s}$ any polygon has finite energy. 
\end{remark}

\section{Proof of Theorem~\ref{MainTheorem-23}}\label{FineConv09}

In Theorem~\ref{th:convergence} we prove that sequences in $X$ with bounded $\mathscr{W}_s$-energy weakly converge locally in $W_{2}^{s+1,\frac{1}{s}}(\S^1\setminus \Sigma)$ (where $\Sigma$ is finite) to a limit curve which belongs to $X$. Having this, \Cref{MainTheorem-23} is a consequence of the direct method of calculus of variations.

\begin{theorem}\label{th:convergence}
Assume we have a sequence of $\gamma_k \in C^1(\S^1,\R^2)$ such that 
\begin{itemize}
 \item each $\gamma_k$ is a homeomorphism onto its image,
 \item $\left|\gamma'_k\right|\equiv 1$ for all $k$,
 \item $\gamma_k$ are convex on $\S^1$, in the sense of \Cref{def:convexcurve}.
\end{itemize}
Assume moreover 
\begin{equation}\label{sfwrepolfh}
\Lambda := \sup_{k} \int_{\S^1} \brac{\int_{\S^1} \frac{\langle n(\gamma_k(y)) ,\gamma_k(y)-\gamma_k(x)\rangle}{|\gamma_k(x)-\gamma_k(y)|^{2+s}} \, dy}^{\frac{1}{s}}\, dx < +\infty.
\end{equation}
Then, {up to translation of $\gamma_k$, and up to taking a subsequence} the following holds for some finite set $\Sigma\subset\S^1$ 
\begin{itemize}
\item[1)] $\gamma_{k}$ converge uniformly to $\gamma$ in $\S^1$,
\item[2)] for each $x_0\in \S^1\setminus \Sigma$ there exists some $\rho_{x_0}$ such that $\gamma_{k}$ converges weakly to $\gamma$ in $W_2^{s+1,\frac{1}{s}}(\S^1 \cap B_{\rho_{x_0}}(x_0))$.
\item[3)] $\gamma\in C^1(\S^1,\R^2)$ and $\gamma$ satisfies all the bullet points above from $\gamma_k$.
\end{itemize}
Furthermore, it holds that 
\begin{equation}\label{883Dsft5}
\mathscr{W}_s(\gamma)\leq \liminf_{k}\mathscr{W}_s(\gamma_{k_j}).
\end{equation}
\end{theorem}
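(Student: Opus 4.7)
My plan is a concentration-compactness argument in the spirit of \cite{BRSV21}, exploiting the technical tools developed in Sections~\ref{VMO}--\ref{gfdcvret56}. First I would translate so that $\gamma_k(0)=0$; the bound $|\gamma_k'|\equiv 1$ then gives uniform Lipschitz estimates and confines each image to a fixed ball, so Arzel\`a--Ascoli yields a uniform limit $\gamma\in C^0(\S^1,\R^2)$ along a subsequence. The heart of the argument will be obtaining a local $W^{1+s,\frac{1}{s}}_2$ bound on $\gamma_k$ away from a finite singular set of energy concentration, by combining the smallness thresholds of \Cref{pr:VMOcontrol}, \Cref{la:bilipvmo}, \Cref{blatt:bilip}, and crucially \Cref{th:sobolevcontrol}.

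\textbf{Construction of $\Sigma$ and weak convergence.} Next I would introduce the nonnegative (by convexity) energy densities
\[
e_k(x):=\left(\int_{\S^1}\frac{\langle n(\gamma_k(y)),\gamma_k(y)-\gamma_k(x)\rangle}{|\gamma_k(x)-\gamma_k(y)|^{2+s}}\,dy\right)^{\frac{1}{s}},
\]
and the measures $\mu_k:=e_k\,\sigma^1$, which satisfy $\mu_k(\S^1)\leq\Lambda$. Extract a weak-$*$ limit $\mu$, fix $\eps_0$ smaller than the smallness thresholds of all the lemmas mentioned above, and set $\Sigma:=\{x_0\in\S^1:\mu(\{x_0\})\geq\eps_0\}$, which is finite. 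For each $x_0\in\S^1\setminus\Sigma$ I would choose $\rho_{x_0}>0$ with $\mu(\overline{B_{20\rho_{x_0}}(x_0)})<\eps_0/2$; upper-semicontinuity of $\mu_k$ on closed sets then yields $\mu_k(\overline{B_{20\rho_{x_0}}(x_0)})<\eps_0$ for $k$ large. \Cref{th:sobolevcontrol} then gives a uniform bound on $[\gamma_k']_{W^{s,\frac{1}{s}}_2(B_{\rho_{x_0}}(x_0))}$, which together with $|\gamma_k'|\equiv 1$ produces a uniform $W^{1+s,\frac{1}{s}}_2$ bound. A countable cover of $\S^1\setminus\Sigma$ and a diagonal extraction via reflexivity then yield a subsequence converging weakly to $\gamma$ in each $W^{1+s,\frac{1}{s}}_2(B_{\rho_{x_0}}(x_0))$.

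\textbf{Properties of the limit and lower semicontinuity.} Using compactness of $W^{1+s,\frac{1}{s}}_2\hookrightarrow W^{1,\frac{1}{s}}$ I expect $\gamma_k'\to\gamma'$ in $L^{1/s}_{loc}(\S^1\setminus\Sigma)$ and a.e.\ along a further subsequence, so $|\gamma'|\equiv 1$ a.e.\ on $\S^1$. Convexity of $\gamma$ passes directly from the pointwise inequality in \Cref{def:convexcurve} after extracting pointwise limits of the supporting vectors $\nu_x^k$ on a dense set. For \eqref{883Dsft5}, the nonnegativity of the inner integrand combined with a.e.\ convergence of $\gamma_k$ and $\gamma_k'$ allows Fatou in $y$ (and monotonicity of $t\mapsto t^{1/s}$) to yield $\liminf_k e_k(x)\geq e(x)$ a.e., and a second Fatou in $x$ delivers the energy inequality. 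Finite energy for $\gamma$ then lets \Cref{n77754fg} upgrade the regularity to $C^1$.

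\textbf{Main obstacle: injectivity of $\gamma$.} The most delicate point I foresee is verifying that $\gamma$ is a homeomorphism, i.e.\ that the limiting convex body $K:=\mathrm{co}(\gamma(\S^1))$ does not degenerate. If $K$ were a segment of length $\pi$ (the only degeneration compatible with length $2\pi$), a barrier computation analogous to \Cref{lm:barrierestimates} applied to the approximating thin convex hulls shows that along the long sides of width $w$ the density satisfies $e_k\gtrsim w^{-(1+s)/s}$ over an arclength of order $1$, forcing $\mathscr{W}_s(\gamma_k)\to\infty$ and contradicting \eqref{sfwrepolfh}. Granted non-degeneracy, continuity of the perimeter functional under Hausdorff convergence of planar convex bodies gives $\mathrm{perim}(K)=2\pi$, so $\gamma$ is an arc-length parametrization of $\partial K$ and hence a homeomorphism. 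Away from $\Sigma$ the injectivity result \Cref{th:smallenergyinjectivity} serves as a complementary ingredient, directly forbidding nearby parameter points from colliding there.
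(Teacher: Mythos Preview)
Your overall architecture matches the paper's: normalize by translation, Arzel\`a--Ascoli for a uniform limit, isolate a finite concentration set $\Sigma$, apply \Cref{th:sobolevcontrol} locally off $\Sigma$ for $W^{1+s,1/s}_2$ bounds, pass to weak limits, use Fatou twice for \eqref{883Dsft5}, and invoke \Cref{n77754fg} for $C^1$. Your measure-theoretic construction of $\Sigma$ (atoms of the weak-$*$ limit of $e_k\,\sigma^1$) is a legitimate alternative to the paper's covering argument \Cref{pr:uniformsmallness}, and it does deliver the smallness hypothesis of \Cref{th:sobolevcontrol}, since for nonnegative integrands $\int_{B}\bigl(\int_{B}\cdots\bigr)^{1/s}\le \mu_k(B)$.

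The genuine gap is in the injectivity step, and it is twofold. First, your measure $\mu_k$ localizes in the \emph{outer} variable $x$, whereas \Cref{th:smallenergyinjectivity} requires smallness of $\int_{\S^1}\bigl(\int_{B_\rho(x_0)}\cdots\bigr)^{1/s}dx$, i.e.\ localization in the \emph{inner} variable $y$; these are not comparable, so you cannot invoke \Cref{th:smallenergyinjectivity} ``as a complementary ingredient'' without a separate argument. The paper's covering lemma \Cref{pr:uniformsmallness} is set up precisely to deliver \emph{both} localizations simultaneously, and this is what drives its proof that any coincidence $\gamma(x)=\gamma(y)$ forces $x,y\in\Sigma$, after which \Cref{topolmfd} (the image is a topological $1$-manifold) rules out finitely many self-intersections. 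Second, your proposed replacement---non-degeneracy of $K=\mathrm{co}(\gamma(\S^1))$ plus perimeter continuity---does not by itself yield that $\gamma$ is a homeomorphism: from $\mathrm{perim}(K)=2\pi$ and $|\gamma'|=1$ the area formula gives only that $\gamma$ is injective $\mathcal H^1$-a.e., and you still need to exclude isolated double points. This is exactly the work that \Cref{th:smallenergyinjectivity} and \Cref{topolmfd} do in the paper.

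A smaller ordering issue: \Cref{n77754fg} has ``$\gamma$ is a homeomorphism'' among its hypotheses, so you must secure injectivity \emph{before} upgrading to $C^1$, not after. The paper respects this order; your write-up applies \Cref{n77754fg} in the paragraph preceding the injectivity discussion.
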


\begin{proof}[Proof of Theorem~\ref{th:convergence}]
\underline{As for 1),} by a translation argument we may assume that $\gamma_k(0)=0$ for all $k \in \R$. Since $|\gamma'_k| \equiv 1$, up to subsequence we obtain from the Arzela-Ascoli theorem the uniform convergence to some $\gamma \in \lip(\S^1,\R^3)$, $|\gamma'| \leq 1$ a.e. in $\S^1$.

\underline{Convexity is preserved:}
Fix $y \in \S^1$. Since $\gamma_k$ are by assumption differentiable at $y$, as a consequence of convexity, \eqref{fcdghte}, and~\Cref{la:vxinconvexity} we have that
\begin{equation*}
\langle n(\gamma_k(y)),\gamma_k(y)-\gamma_k(x)\rangle \geq 0\quad\mbox{for all } x\in \S^1,
\end{equation*}
where $n(\gamma_k(y))=\brac{\gamma_k'}^\perp(y)$. Since $n(\gamma_k(y))\in \S^1$ it admits a converging subsequence to some $v_y\in \S^1$, and using the uniform convergence of the full sequence $\gamma_k$, one deduces from the above equation that 
\begin{equation}\label{gbtredfg}
v_y\cdot\left(\gamma(y)-\gamma(x)\right)\geq 0\quad\mbox{for all }x\in \S^1.
\end{equation}
In particular, we have proved $\gamma$ is still convex in the sense of \Cref{def:convexcurve}.

\underline{As for 2), }
by a covering argument, Proposition~\ref{pr:uniformsmallness}, for each $\delta>0$ there exists a subset $\Sigma\subset \S^1$ such that $\#\Sigma\leq L$ with $L(\Lambda, \delta)\in \N$ and for each $x_0\in \S^1\setminus \Sigma$ there exists some $\rho_{x_0}\in (0,1)$ with respect to which  
\begin{equation}\label{fcdop}
\sup_{k}\int_{\S^1} \brac{\int_{B_{40\rho_{x_0}}(x_0)} \frac{\langle n(\gamma_k(y)),\gamma_k(y)-\gamma_k(x)\rangle }{|\gamma_k(x)-\gamma_k(y)|^{2+s}} \, dy}^{\frac{1}{s}} <\delta.
\end{equation}

Thus, by choosing $\delta$ small enough, we can apply \Cref{th:sobolevcontrol} and obtain that 
\begin{equation*}
\left[\gamma_k'\right]_{W_2^{s,\frac{1}{s}}(B_{5\rho_{x_0}}(x_0))}\leq C\delta.
\end{equation*}

By reflexivity of $W_2^{s,\frac{1}{s}}(B_{5\rho_{x_0}}(x_0))$ and Banach-Alaoglu combined with Rellich's Theorem, we find that $\gamma_k'$ weakly converges to $\gamma'$, up to a subsequence, in  $W_2^{s,\frac{1}{s}}(B_{5\rho_{x_0}}(x_0))$ and the convergence is pointwise a.e. in $B_{5\rho_{x_0}}(x_0)$ and strong in $L^1$. This proves 2).

\underline{$\gamma$ is in arclength-parametrization:} Since $\gamma_k'$ converges a.e. in $\S^1\setminus \Sigma$ and $\Sigma$ is a finite set, we obtain that 
\begin{equation}\label{fcdklop}
\left|\gamma'(x)\right|=\lim_{k}\left|\gamma_k'(x)\right|\equiv 1\quad \mbox{for a.e. } x\in\S^1.  
\end{equation}

\underline{$\gamma$ is locally biLipschitz:}
Also, by choosing $\delta$ small enough, as a consequence of \eqref{fcdop}, Proposition~\ref{pr:VMOcontrol} and \Cref{la:bilipvmo} we obtain that for each $x_0\in \S^1\setminus \Sigma$ and $k\in\N$
\begin{equation*}
\frac{1}{2}\left|x-y\right|\leq \left|\gamma_k(x)-\gamma_k(y)\right|\leq \left|x-y\right|\quad\mbox{for all}\quad x,y\in B_{\rho_{x_0}}(x_0).
\end{equation*}
From this and the uniform convergence of $\gamma_k$ to $\gamma$ we obtain that 
\begin{equation}\label{Giovedi}
\frac{1}{2}\left|x-y\right|\leq \left|\gamma(x)-\gamma(y)\right|\leq \left|x-y\right|\quad\mbox{for all}\quad x,y\in B_{\rho_{x_0}}(x_0).
\end{equation}
Now, thanks to the uniform convergence of $\gamma_k$ to $\gamma$ we obtain that if for some $x_0\in\S^1\setminus \Sigma$ and any $y_0\in \S^1$ it holds that 
\begin{equation}\label{gbtrep}
\left|\gamma(x_0)-\gamma(y_0)\right|<\frac{\rho_{x_0}}{100},
\end{equation}   
then for all $k$ sufficiently large it holds that 
\begin{equation*}
\left|\gamma_k(x_0)-\gamma_k(y_0)\right|<\frac{\rho_{x_0}}{100}.
\end{equation*} 
Using this last equation together with~\eqref{fcdop} and Theorem~\ref{th:smallenergyinjectivity}, we evince that
\begin{equation}\label{qsqsqsqewr}
\left|x_0-y_0\right|\leq \rho_{x_0}.
\end{equation}
Moreover, assume that $\gamma(x)=\gamma(y)$ for some $x,y\in\S^1$. Then, if $x\notin\Sigma$, as a consequence of~\eqref{gbtrep} and~\eqref{qsqsqsqewr} we deduce that $y\in B_{\rho_x}(x)$. In particular, using this last information and~\eqref{Giovedi} we obtain that $x=y$.  Analogously, one shows that if $y\notin \Sigma$ then $x=y$. In conclusion, 
\begin{equation}\label{jfdert}
\mbox{if  } \gamma(x)=\gamma(y) \mbox{  then either  } x=y \mbox{  or  } x,y\in\Sigma. 
\end{equation}

\underline{Lower semicontinuity:}
Now that we obtained point wise convergence of $\gamma_k'$, we notice that for a.e. $x,y\in \S^1\setminus \Sigma$ such that $x\neq y$, one has that 
\begin{equation*}
\frac{\langle n(\gamma(y)), \gamma(y)-\gamma(x)\rangle }{\left|\gamma(y)-\gamma(x)\right|^{2+s}}=\lim_{k\to +\infty} \frac{\langle n(\gamma_k(y), (\gamma_k(y)-\gamma_k(x)\rangle}{\left|\gamma_k(y)-\gamma_k(x)\right|^{2+s}}.
\end{equation*}
In particular, from Fatou's lemma we obtain that
\begin{equation*}
\left(\int_{\S^1}\frac{\langle n(\gamma(y)),\gamma(y)-\gamma(x)\rangle }{\left|\gamma(y)-\gamma(x)\right|^{2+s}}\,dy\right)^\frac{1}{s}\leq \liminf_{k\to +\infty} \left(\int_{\S^1}\frac{\langle n(\gamma_k(y)), \gamma_k(y)-\gamma_k(x)\rangle}{\left|\gamma_k(y)-\gamma_k(x)\right|^{2+s}}\,dy\right)^\frac{1}{s}.
\end{equation*}
By integrating with respect to $x$ in $\S^1$ the above expression and applying Fatou's Lemma a second time, we obtain 
\begin{equation*}
\mathscr{W}_s(\gamma)\leq \liminf_{k}\mathscr{W}_s(\gamma_k).
\end{equation*}
This concludes the proof of lower semicontinuity, i.e. \eqref{883Dsft5}. 

\underline{$\gamma$ is a homeomorphism}: In view of \Cref{topolmfd}, $\gamma(\S^1)$ is a topological one-manifold. Since $\gamma$ has only finitely many self intersection, namely only on $\Sigma$, this implies that actually $\gamma$ must be injective -- for the details of this argument see \cite{BRSV21}. 

\underline{$\gamma$ is $C^1$}: Since $\gamma$ has finite Willmore-energy it belongs to $X$ as in \eqref{def:setX}.

Therefore, applying \Cref{n77754fg} we deduce that $\gamma\in C^1(\S^1,\R^2)$. This last result together with~\eqref{gbtredfg} and~\eqref{fcdklop} proves $3)$.

\end{proof}

\appendix

\section{Facts about convex curves in \texorpdfstring{$\R^2$}{R2}}
In this section we recall geometric properties of convex curves and graphs, where we recall the notion of convexity in \eqref{fcdghte}.

\begin{lemma}\label{la:vxinconvexity}
Assume $\gamma\in Lip((a,b),\R^2)$, $\left|\gamma'\right|\equiv 1$ a.e. in $\S^1$ and $\gamma$ is convex in the sense of \eqref{fcdghte}.  Then, if $\gamma$ is differentiable at $x_0\in (a,b)$, it holds that $\langle \nu_{x_0}, \gamma'(x_0)\rangle=0$. Also, if $\gamma((a,b))$ is not a straight line $\nu_{x_0}$ is unique.
\end{lemma}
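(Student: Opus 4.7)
The plan is to derive the orthogonality relation $\langle\nu_{x_0},\gamma'(x_0)\rangle=0$ directly from differentiability of $\gamma$ at $x_0$, and then to rule out non-uniqueness by a case analysis based on whether two candidate vectors $\nu_{x_0}$ are antipodal or linearly independent.

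For the orthogonality claim, I will plug the first-order expansion
\[
\gamma(y)=\gamma(x_0)+\gamma'(x_0)(y-x_0)+o(|y-x_0|)
\]
into the inequality $\langle\nu_{x_0},\gamma(x_0)-\gamma(y)\rangle\geq 0$ given by \eqref{fcdghte}, divide by $(y-x_0)$, and let $y\to x_0$ from both sides. The sign of $(y-x_0)$ changes between the two limits, so the resulting pair of inequalities forces $\langle\nu_{x_0},\gamma'(x_0)\rangle=0$.

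For uniqueness, assume two distinct unit vectors $\nu_1,\nu_2\in\S^1$ both satisfy \eqref{fcdghte} at $x_0$. Setting
\[
K:=\{w\in\R^2:\langle\nu_i,w\rangle\leq 0\text{ for }i=1,2\},
\]
the condition translates to $\gamma(y)-\gamma(x_0)\in K$ for every $y\in(a,b)$. If $\nu_2=-\nu_1$, then $K$ collapses to the line orthogonal to $\nu_1$, so $\gamma((a,b))$ lies in an affine line -- contradicting the non-line hypothesis. If instead $\nu_1,\nu_2$ are linearly independent, $K$ is a proper convex cone with $K\cap(-K)=\{0\}$; applying the same limiting procedure as in the orthogonality argument to the inclusion $\gamma(y)-\gamma(x_0)\in K$ rescaled by $(y-x_0)^{-1}$, one obtains $\pm\gamma'(x_0)\in K$, and hence $\gamma'(x_0)=0$.

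The main obstacle is ruling out $\gamma'(x_0)=0$ under the assumptions. For this I will use that, thanks to convexity, the tangent vector admits a representation $\gamma'(s)=(\cos\theta(s),\sin\theta(s))$ with $\theta:(a,b)\to\R$ monotone and with range of length at most $2\pi$ -- a standard description of the tangent map along the boundary of a planar convex region, which I will extract directly from \eqref{fcdghte} together with the observation that $\gamma((a,b))$ is a subarc of $\partial(\mathrm{co}\,\gamma((a,b)))$. Monotonicity guarantees that the one-sided limit $\theta(x_0^+)$ exists, and bounded convergence then gives
\[
\lim_{h\to 0^{+}}\frac{1}{h}\int_{x_0}^{x_0+h}\gamma'(s)\,ds=\bigl(\cos\theta(x_0^{+}),\sin\theta(x_0^{+})\bigr),
\]
a unit vector. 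Differentiability identifies this limit with $\gamma'(x_0)$, forcing $|\gamma'(x_0)|=1$ and contradicting $\gamma'(x_0)=0$. The subtle point in the whole argument is the monotone lift of the tangent angle under the weak convexity notion \eqref{fcdghte}; everything else is a one-line Taylor expansion or an elementary cone computation.
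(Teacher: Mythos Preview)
Your orthogonality argument is identical to the paper's: both divide the supporting inequality by $y-x_0$ and pass to one-sided limits.

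For uniqueness, the paper takes a shorter path. Having shown $\langle\nu_{x_0},\gamma'(x_0)\rangle=0$ for \emph{every} admissible $\nu_{x_0}$, it observes that (once $\gamma'(x_0)\neq 0$) there are only two unit vectors orthogonal to $\gamma'(x_0)$, namely $\pm\gamma'(x_0)^\perp$; hence two distinct admissible vectors are automatically antipodal, and the antipodal case forces the image onto a line. Your linearly-independent case is therefore vacuous as soon as $\gamma'(x_0)\neq 0$, and your cone argument, while correct, is not needed.

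What your proposal does add is a justification of $|\gamma'(x_0)|=1$ at the specific point of differentiability, via the monotone lift of the tangent angle. The paper simply uses this without comment when it writes $\nu_{x_0}=\pm\gamma'(x_0)^\perp$; you are right that $|\gamma'|\equiv 1$ a.e.\ does not by itself force $|\gamma'(x_0)|=1$ at a given differentiability point, and that convexity is what rules out $\gamma'(x_0)=0$. So your route is more scrupulous on this point. The trade-off is that you defer the real work to the existence of a monotone angle lift under the weak convexity hypothesis \eqref{fcdghte}, which you label as standard but do not prove; this is precisely the ``subtle point'' you flag, and it is of comparable weight to the step the paper leaves implicit. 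If you want a fully self-contained argument, you could avoid the global angle lift altogether: from $\gamma(y)-\gamma(x_0)\in K$ for the cone $K$ determined by any single $\nu_{x_0}$, the one-sided difference quotients show $\gamma'(x_0)\in K\cap(-K)=\nu_{x_0}^\perp$, and then the averaging identity $\gamma'(x_0)=\lim_{h\to 0}\frac{1}{h}\int_{x_0}^{x_0+h}\gamma'$ combined with $|\gamma'|=1$ a.e.\ and the fact that all $\gamma'(s)$ lie in the closed half-plane $\{\langle\nu_{x_0},\cdot\rangle\leq 0\}$ (which follows from \eqref{fcdghte} applied at $x_0$ and the orthogonality already proved at a.e.\ $s$) gives $|\gamma'(x_0)|=1$ more directly than the full monotone-angle machinery.
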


\begin{proof}
We observe that according to~\eqref{fcdghte} one has that
\begin{equation*}
0 \geq \left\langle \nu_{x_0}, \lim_{x\to x_0^{+}} \frac{\gamma(x)-\gamma(x_0)}{x-x_0}\right\rangle=\langle\nu_{x_0}, \gamma'(x_0)\rangle
\end{equation*}
and similarly
\begin{equation*}
0 \leq  \left\langle\nu_{x_0}, \lim_{x\to x_0^{-}} \frac{\gamma(x)-\gamma(x_0)}{x-x_0}\right\rangle=\langle\nu_{x_0}, \gamma'(x_0)\rangle.
\end{equation*}
From the above equations we deduce that $\langle \gamma'(x_0), \nu_{x_0}\rangle=0$. Also, we claim that if $\gamma((a,b))$ is not a line then
\begin{equation}\label{giagio}
\nu_{x_0}\quad\mbox{is unique}.
\end{equation}
Indeed, if~\eqref{giagio} was false we would necessarily have that there are two vectors $\nu_{x_0}, \tilde{\nu}_{x_0}\in\S^1$ such that $\langle\nu_{x_0},\gamma'(x_0)\rangle= 0$ and $\langle\tilde{\nu}_{x_0}, \gamma'(x_0)\rangle=0$, which implies that (up to renaming the two vectors) $\nu_{x_0}=-\gamma'(x_0)^\perp$ and $\tilde{\nu}_{x_0}=\gamma'(x_0)^\perp$. Thus, we would have that $\nu_{x_0}=-\tilde{\nu}_{x_0}$ and
\begin{equation*}
\nu_{x_0}\cdot (\gamma(x_0)-\gamma(y))\geq 0 \quad\mbox{and}\quad -\nu_{x_0}\cdot (\gamma(x_0)-\gamma(y))\geq 0.
\end{equation*}
This implies that $\gamma((a,b))$ is necessarily a straight line, providing a contradiction.
\end{proof}

\begin{lemma}\label{la:convex1}
Assume there are two distinct points $p_0, p_1 \in \R^2$, and set
\[
 T = p_1-p_0 \in \R^2 \setminus \{0\}
\]
Assume that there is $m$ on the segment $[p_1,p_0]$, i.e. if is some $\lambda \in [0,1]$ such that 
\begin{equation}\label{eq:convset:asdasd2}
 m = p_0 + \lambda T.
\end{equation}
Then if at $m$ we have the convexity condition \Cref{def:convexcurve}, that is if we find $v_{m} \in \R^2$ such that $|v_{m}|=1$ and
\begin{equation}\label{eq:convset:3240}
 \langle v_{m}, m-y\rangle \geq 0 \quad \forall y \in \{p_1,p_2\}
\end{equation}
Then necessarily
\[
 v_{m} \in \left \{ -\frac{T^\perp}{|T^\perp|}, \frac{T^\perp}{|T^\perp|} \right \}
\]
\end{lemma}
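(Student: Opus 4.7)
The plan is to substitute both endpoints $y=p_0$ and $y=p_1$ into the convexity condition \eqref{eq:convset:3240} and use the parametric representation \eqref{eq:convset:asdasd2} of $m$.

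First I would compute $m - p_0 = \lambda T$ directly from \eqref{eq:convset:asdasd2}, and likewise $m - p_1 = (m-p_0) - T = (\lambda-1)T$. Plugging these two identities into \eqref{eq:convset:3240} yields the pair of scalar inequalities
\[
\lambda \langle v_m, T\rangle \;\geq\; 0, \qquad (\lambda - 1)\langle v_m, T\rangle \;\geq\; 0.
\]
The interesting case is $\lambda \in (0,1)$ (the lemma will be applied with $m$ in the relative interior of the segment; at the endpoints $m$ coincides with $p_0$ or $p_1$ and the conclusion is vacuous or degenerate). In that case $\lambda > 0$ forces $\langle v_m, T\rangle \geq 0$ while $\lambda - 1 < 0$ forces $\langle v_m, T\rangle \leq 0$, so we conclude $\langle v_m, T\rangle = 0$.

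Thus $v_m$ is orthogonal to the nonzero vector $T$. Since $\R^2$ is two-dimensional, the orthogonal complement of $T$ is the one-dimensional line spanned by $T^\perp$, and the constraint $|v_m|=1$ singles out exactly the two unit vectors $\pm T^\perp/|T^\perp|$. This is the desired conclusion.

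There is no real obstacle here: the whole proof is a one-line dichotomy from $\langle v_m, T\rangle \geq 0$ and $\langle v_m, T\rangle \leq 0$. The only subtle point worth flagging is the degenerate boundary case $\lambda \in \{0,1\}$, where $m$ coincides with one of the $p_i$ and the argument collapses; in applications of the lemma we will only ever invoke it when $m$ lies strictly between $p_0$ and $p_1$, so this is immaterial.
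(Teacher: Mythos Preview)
Your proof is correct and follows essentially the same approach as the paper: both reduce the two convexity inequalities to $\lambda\langle v_m,T\rangle\geq 0$ and $(\lambda-1)\langle v_m,T\rangle\geq 0$, forcing $\langle v_m,T\rangle=0$. Your version is in fact slightly more direct, since the paper first decomposes $v_m$ in the orthonormal frame $\{T,T^\perp\}$ before arriving at the same pair of inequalities, whereas you compute $m-p_0=\lambda T$ and $m-p_1=(\lambda-1)T$ immediately; and your explicit remark on the degenerate endpoint case $\lambda\in\{0,1\}$ is a point the paper glosses over.
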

\begin{proof}
By scaling we may assume that $|T| = |p_1-p_0|=1$.

The assumption \eqref{eq:convset:asdasd2} readily implies
\[
\langle T^\perp, m-p_0\rangle =0
\]
Since $T=p_1-p_0$ we thus have
\begin{equation}\label{eq:towconvset:1234}
 0=\langle m-p_0,T^\perp \rangle=\langle m-p_1,T^\perp \rangle.
\end{equation}
Assume that for some $v_{m} \in \R^2$, $|v_{m}|=1$, we have \eqref{eq:convset:3240}, i.e. assume
\[
 \begin{cases}\langle v_{m}, m-p_0\rangle \geq 0 \quad &\text{and}\\
 \langle v_{m}, m-p_1\rangle \geq 0 
 \end{cases}
\]

Since $|T|=|T^\perp| =1$ we can write $v_{m} = \langle v_{m}, T \rangle T+ \langle v_{m}, T^\perp \rangle T^\perp$. Thus, taking into account \eqref{eq:towconvset:1234}  the previous equations become
\[
\begin{cases}
 \langle v_{m}, T\rangle \langle T, m-p_0\rangle \geq 0 \quad &\text{and}\\
 \langle v_{m}, T\rangle  \langle T, m-p_1\rangle \geq 0 
 \end{cases}
\]
But now 
\[
\langle T, m-p_1\rangle =\langle T, m-p_0 - T\rangle = \langle T, m-p_0 \rangle -1
\]
So we find 
\[
\begin{cases}
 \langle v_{m}, T\rangle \langle T, m-p_0\rangle \geq 0 \quad &\text{and}\\
 \langle v_{m}, T\rangle  \brac{\langle T, m-p_0\rangle-1} \geq 0 
 \end{cases}
\]
In view of \eqref{eq:convset:asdasd2} this implies 
\[
\begin{cases}
 \langle v_{m}, T\rangle \lambda \geq 0 \quad &\text{and}\\
 \langle v_{m}, T\rangle  \brac{\lambda-1} \geq 0 
 \end{cases}
\]
Since $\lambda \in [0,1]$ this implies $\langle v_{m}, T\rangle=0$, that is $v_{m} = \pm T^\perp$. We can conclude.
\end{proof}

\begin{lemma}\label{la:convex2}
Assume $\gamma: [0,1] \to \R^2$ is continuous, convex in the sense of \Cref{def:convexcurve} and that 
\[
 T = \gamma(1)-\gamma(0) \in \R^2 \setminus \{0\}.
\]
Assume further more that $\gamma(t) \not \in \{\gamma(0),\gamma(1)\}$ for all $t \in (0,1)$.
Then either 
\[
 \langle T^\perp, \gamma(t)-\gamma(0)\rangle \geq 0 \quad \forall t \in [0,1]
\]
or 
\[
 \langle T^\perp, \gamma(t)-\gamma(0)\rangle \leq 0 \quad \forall t \in [0,1]
\]
\end{lemma}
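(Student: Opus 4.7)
I would argue by contradiction using the auxiliary scalar function
\[
f(t) := \langle T^\perp, \gamma(t) - \gamma(0)\rangle,
\]
which is continuous on $[0,1]$ and vanishes at both endpoints (since $f(1) = \langle T^\perp, T\rangle = 0$). The assertion is exactly that $f$ does not change sign on $[0,1]$, so I would suppose for contradiction that there exist $t_+, t_- \in (0,1)$ with $f(t_+) > 0 > f(t_-)$.

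First, the intermediate value theorem applied to $f$ between $t_+$ and $t_-$ yields an interior zero $t_0 \in (0,1)$; geometrically, $\gamma(t_0)$ lies on the full affine line through $\gamma(0)$ and $\gamma(1)$. The hypothesis $\gamma(t) \notin \{\gamma(0),\gamma(1)\}$ for $t \in (0,1)$, together with $\gamma(0) \neq \gamma(1)$, ensures that $\gamma(0), \gamma(t_0), \gamma(1)$ are three pairwise distinct collinear points. Exactly one of them then lies strictly between the other two on this line; I would label this middle point $M = \gamma(t_*)$ and the remaining two parameters $s_1, s_2 \in \{0,t_0,1\}$, so that $M$ lies on the segment $[\gamma(s_1), \gamma(s_2)]$.

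The key step is then to combine the convexity hypothesis \eqref{fcdghte} at $\gamma(t_*)$ with Lemma~\ref{la:convex1}. Convexity provides a unit vector $\nu_{t_*}$ with $\langle \nu_{t_*}, \gamma(t_*) - \gamma(s)\rangle \geq 0$ for all $s \in [0,1]$, in particular for $s = s_1, s_2$. Since $\gamma(s_2) - \gamma(s_1)$ is a nonzero scalar multiple of $T$, Lemma~\ref{la:convex1} (applied with $p_0 = \gamma(s_1)$, $p_1 = \gamma(s_2)$, $m = M$) forces $\nu_{t_*} \in \{+T^\perp/|T|, -T^\perp/|T|\}$. Because $f(t_*) = 0$, the full convexity inequality then reads $\mp f(s)/|T| \geq 0$ for every $s \in [0,1]$, i.e. $f$ is of one constant sign on $[0,1]$, contradicting the choice of $t_\pm$.

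The main subtlety that forces us to go through Lemma~\ref{la:convex1} rather than just using convexity at $\gamma(t_0)$ directly is that the IVT only produces a zero of $f$ on the line through $\gamma(0),\gamma(1)$, and that zero need not correspond to a point of the open chord; $\gamma(t_0)$ may well lie outside the segment $[\gamma(0),\gamma(1)]$. Selecting, among the three collinear images, the one that is genuinely between the other two gives a uniform way to put Lemma~\ref{la:convex1} to work, and is the only step where the distinctness built into the hypotheses is really used.
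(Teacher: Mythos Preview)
Your proof is correct and follows essentially the same route as the paper: find an interior zero of $f$ via the intermediate value theorem, note that among the three collinear images $\gamma(0),\gamma(t_0),\gamma(1)$ one lies on the segment between the other two, apply \Cref{la:convex1} at that middle point to force $\nu_{t_*}=\pm T^\perp/|T|$, and use $f(t_*)=0$ to conclude $f$ has a fixed sign. Your write-up is in fact a bit more explicit than the paper's about why $f(t_*)=0$ lets you replace $\gamma(t_*)$ by $\gamma(0)$ in the final inequality.
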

\begin{proof}
By continuity, injectivity, and the intermediate value theorem, if the claim is false,  the only way the claim is not true is if there exists $\bar{t} \in [0,1]$ with $\gamma(\bar{t}) \neq \gamma(0),\gamma(1)$ but 
\[
 \langle T^\perp, \gamma(\bar{t})-\gamma(0)\rangle = 0.
\]
This implies that one of the following must be true 
\[
 \begin{cases} 
 \gamma(\bar{t}) \in [\gamma(0),\gamma(1)] \quad &\text{or}\\
  \gamma(0) \in [\gamma(\bar{t}),\gamma(1)]\quad &\text{or}\\
  \gamma(1) \in [\gamma(\bar{t}),\gamma(0)]
 \end{cases}
\]
In either case we find from the convexity condition in \Cref{def:convexcurve} combined with \Cref{la:convex1} that  
\[
\begin{cases}
 \langle T^\perp, \gamma(\tilde{t}) - \gamma(t) \rangle \geq 0 \quad \forall t \in [0,1] \quad& \text{or}\\
 \langle T^\perp, \gamma(\tilde{t}) - \gamma(t) \rangle \leq 0 \quad \forall t \in [0,1]\\
 \end{cases}
\]
when $\tilde{t} \in \{0,1,\bar{t}\}$ is chosen correctly. Whichever of the three options $\tilde{t}$ is, we have $\gamma(\tilde{t}) - \gamma(0) \in {\rm span} \{T\}$, so we find either
\[
\begin{cases}
 \langle T^\perp, \gamma(0) - \gamma(t) \rangle \geq 0 \quad \forall t \in [0,1] \quad& \text{or}\\
 \langle T^\perp, \gamma(0) - \gamma(t) \rangle \leq 0 \quad \forall t \in [0,1]\\
 \end{cases}
\]
We can conclude.
\end{proof}

\begin{lemma}\label{le:cvdfqw}
Assume that $(x,f(x)): [0,1] \to \R^2$ is a continuous graph that is convex in the sense of \Cref{def:convexcurve}.

Then $f$ is either convex or concave on $[0,1]$.
\end{lemma}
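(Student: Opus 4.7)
The plan is to argue via supporting lines. Writing $\gamma(x) = (x, f(x))$ for $x \in [0,1]$, convexity in the sense of \Cref{def:convexcurve} supplies for each $x$ a unit vector $\nu_x$ with $\langle \nu_x, \gamma(x)-\gamma(y)\rangle \geq 0$ for every $y \in [0,1]$. First I would observe that for interior $x \in (0,1)$ the second component $(\nu_x)_2$ must be nonzero: otherwise the supporting half-plane is one of $\{z_1 \leq x\}$ or $\{z_1 \geq x\}$, which cannot contain the whole graph because the first coordinate of $\gamma$ takes all values in $[0,1]$. Hence the supporting line at $(x,f(x))$ is the graph of an affine function $\ell_x(y) := f(x) - \frac{(\nu_x)_1}{(\nu_x)_2}(y-x)$, and the sign of $(\nu_x)_2$ determines whether $f \leq \ell_x$ on $[0,1]$ (a supporting line from above) or $f \geq \ell_x$ on $[0,1]$ (from below).

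Let $S_+$, $S_-$ denote the sets of $x \in (0,1)$ admitting a supporting line from above, resp.\ from below. The previous step gives $S_+ \cup S_- = (0,1)$. If their intersection is nonempty at some $x$, two affine functions through the common point $(x, f(x))$ with $\ell^-_x \leq f \leq \ell^+_x$ on $[0,1]$ have a non-negative affine difference on $[0,1]$ vanishing at the interior point $x$; this difference must vanish identically, forcing $f = \ell^+_x = \ell^-_x$ linear, which is both convex and concave, and we are done in this overlap case.

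The substantive step is to show that $S_+$ and $S_-$ are each closed in $(0,1)$. For $t_n \to t \in (0,1)$ with $t_n \in S_+$ and corresponding slopes $k_n$, evaluating the from-above inequality at $y=0$ and $y=1$ yields
\[
\frac{f(1) - f(t_n)}{1 - t_n} \leq k_n \leq \frac{f(t_n) - f(0)}{t_n},
\]
a uniformly bounded range since $t \in (0,1)$ is interior and $f$ is continuous; a subsequential limit slope then produces a supporting line from above at $t$, so $t \in S_+$. Since $(0,1)$ is connected and $S_+, S_-$ are disjoint closed sets covering it (assuming their intersection is empty, which reduces to the previous paragraph), one of them must be empty.

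Finally, if $S_- = \emptyset$ (the case $S_+ = \emptyset$ is symmetric), I would conclude $f$ is concave on $[0,1]$: for any $a < b < c$ in $[0,1]$ the line $\ell_b$ satisfies $\ell_b(a) \geq f(a)$ and $\ell_b(c) \geq f(c)$, so the chord $L_{a,c}$ through $(a,f(a))$ and $(c,f(c))$ lies below $\ell_b$ at both endpoints and hence also at $b$, giving $f(b) = \ell_b(b) \geq L_{a,c}(b)$. The main point of concern is the closedness step above, which however is a routine compactness argument; the only subtlety is that the slope bounds degenerate at the endpoints $0$ and $1$, which is precisely why one needs to work with $S_\pm$ inside the \emph{open} interval $(0,1)$ and then transfer concavity/convexity to the closed interval by continuity.
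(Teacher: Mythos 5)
Your proof is correct, and it takes a genuinely different route from the paper's. The paper argues by contradiction: after normalizing $f(0)=0$, $f(1)=1$, it extracts witnesses $0<\mu<\lambda<1$ to the failure of convexity and concavity, perturbs the chord from $(0,f(0))$ through $(\lambda,f(\lambda))$ by a small slope $\eps$, and uses the intermediate value theorem to produce a secant line that the graph crosses on both sides, contradicting \Cref{la:convex2}. Your argument is a direct, self-contained classification of support directions: for each interior $x$ the unit normal $\nu_x$ from \Cref{def:convexcurve} must have nonzero second component (otherwise the supporting half-plane is a vertical one and cannot contain the graph), so the supporting line at $(x,f(x))$ is either from above or from below; the sets $S_\pm$ of such points cover $(0,1)$, each is closed by the compactness argument on the slopes, and either they overlap (forcing $f$ affine) or one is empty by connectedness, yielding concavity or convexity. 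The paper's route is shorter given that \Cref{la:convex2} is already on hand, so it fits the modular structure of the appendix; your route is more elementary in the sense that it does not invoke \Cref{la:convex2} at all and instead reproduces in one dimension the classical fact that a set admitting a one-sided supporting hyperplane at every boundary point is convex. Both are fully rigorous; one small remark is that the final "transfer to the closed interval by continuity" you flag as a subtlety is in fact automatic, since for any $a<b<c$ in $[0,1]$ the middle point $b$ lies in $(0,1)$, so the supporting-line argument already gives the concavity inequality on the closed interval.
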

\begin{proof}
Assume the claim is wrong. 
Then there must be some some $f$, and we may assume w.l.o.g. $f(0) = 0$, $f(1)=1$, such that for some $0 < \mu < \lambda < 1$  $(\lambda,f(\lambda))$ lies below the line connecting $(0,f(0))$ and $(1,f(1))$, and $f(\mu)$ lies above the line connecting $(0,f(0))$ and $(\lambda,f(\lambda))$. In formulas we have
\begin{equation}\label{eq:convexgraph:2133254}
f(\lambda ) <  \lambda f(1)
\end{equation}
and 
\begin{equation}\label{eq:convexgraph:213we}
f(\mu) > \frac{\mu}{\lambda} f(\lambda)
\end{equation}
\begin{figure}
\includegraphics[width=200px]{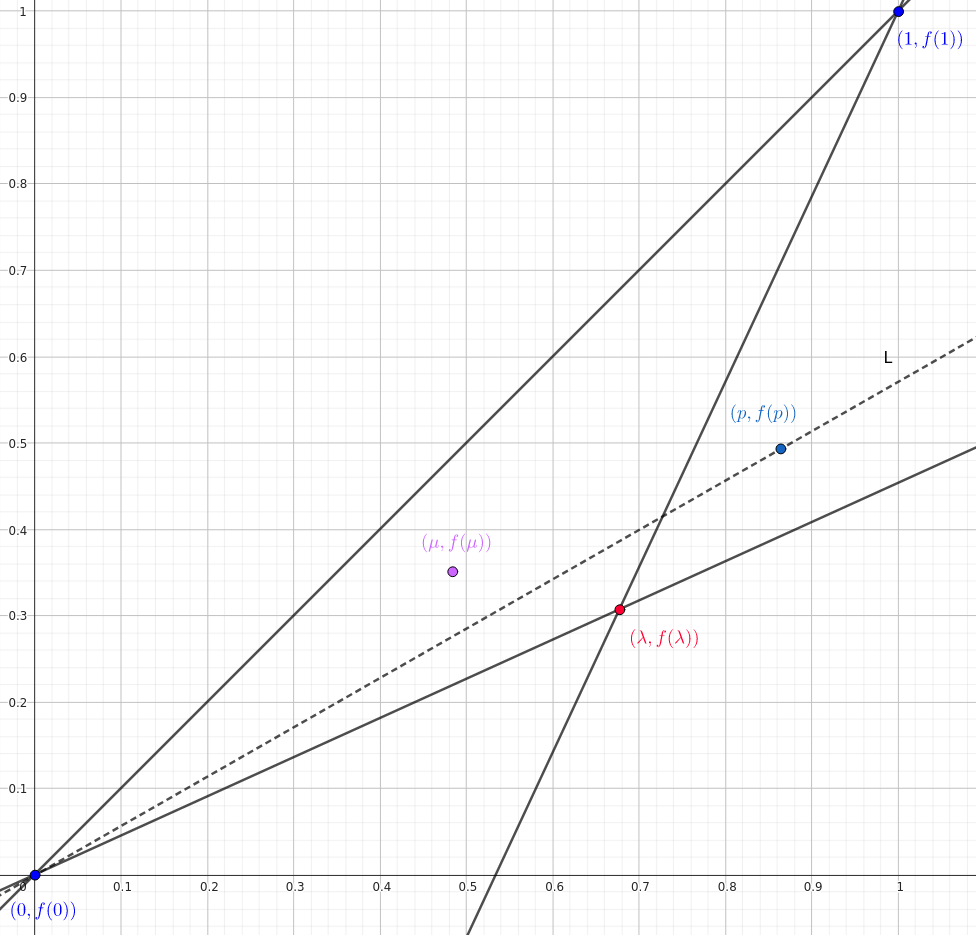}
\caption{\label{fig:cvdfqw}The geometry in the proof of \Cref{le:cvdfqw}}
\end{figure}

We now consider the line starting from $(0,f(0))$ with slope 
\[
m_\eps:= \frac{f(\lambda)-f(0)}{\lambda -0} + \eps,
\]
where $\eps$ is small. Cf. \Cref{fig:cvdfqw}. This line is the graph of 
\[
g_\eps(x) := m_\eps x
\]
We clearly have $g(0)=f(0)=0$. 
For all small enough $\eps > 0$
\[
 g_\eps(\mu) = \frac{\mu}{\lambda} f(\lambda)+\mu \eps \overset{\eqref{eq:convexgraph:213we}}{<} f(\mu)
\]
Also, for all $\eps > 0$
\[
g_\eps(\lambda) = f(\lambda) + \eps > f(\lambda)
\]
Lastly, for possibly even smaller $\eps >0$ we have
\[
g_\eps(1) = \frac{f(\lambda)}{\lambda} + \eps \overset{\eqref{eq:convexgraph:2133254}}{<} f(1)
\]
By the intermediate value theorem we find $p \in (\lambda,1)$ such that $g_\eps(p) = f(p)$

That is the curve $\gamma(x) := (x,f(x))$ and the line $L=(x,g_\eps(x))$ coincide at $0$ and $p$ but $\gamma(\mu)$ lies above the line $L$ and $\gamma(\lambda)$ lies below the line $L$ -- which is a contradiction to \Cref{la:convex2}.
\end{proof}

\begin{lemma}\label{iut4512}
Assume $\gamma: \S^1 \to \R^2$ is continuous {and injective} and fix $\theta_1 \neq \theta_2 \in \S^1$.

Set 
\[
 T := \gamma(\theta_2) - \gamma(\theta_1) \in \R^2 \setminus \{0\}.
\]

We consider the two different arcs connecting $\theta_1$ and $\theta_2$ in $\S^1$, we call them $[\theta_1,\theta_2]$ and $[\theta_2,\theta_1]$.

Then the curve is separated by $\gamma(\theta_1) + \lambda T$, more precisely we have either  
\[
 \begin{cases} \langle \gamma(\theta_1)-\gamma(t), T^\perp\rangle \geq 0 \quad \forall t \in [\theta_1,\theta_2] \quad& \text{and}\\
 \langle \gamma(\theta_1)-\gamma(t), T^\perp\rangle \leq 0 \quad \forall t \in [\theta_2,\theta_1] 
 \end{cases}
\]
or 
\[
 \begin{cases} \langle \gamma(\theta_1)-\gamma(t), T^\perp\rangle \leq 0 \quad \forall t \in [\theta_1,\theta_2] \quad& \text{and}\\
 \langle \gamma(\theta_1)-\gamma(t), T^\perp\rangle \geq 0 \quad \forall t \in [\theta_2,\theta_1] 
 \end{cases}
\]
\end{lemma}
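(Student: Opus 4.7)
The plan is to reduce to \Cref{la:convex2} applied separately on each of the two arcs $[\theta_1,\theta_2]$ and $[\theta_2,\theta_1]$, and then combine the resulting sign statements. Since this section is devoted to convex curves and \Cref{la:convex2} requires the convexity condition of \Cref{def:convexcurve}, I read the hypothesis as implicitly supplemented by convexity; without some form of convexity the conclusion is false, since one can build an injective piecewise-linear ``stairstep'' Jordan curve that crosses the line through $\gamma(\theta_1),\gamma(\theta_2)$ many times.

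The reduction step is routine. Reparametrize each arc as a continuous map $[0,1]\to\R^2$. Injectivity of $\gamma$ delivers the ``endpoints only'' hypothesis of \Cref{la:convex2} on each open arc. Convexity restricted to an arc is inherited from convexity on $\S^1$, since the supporting vector $\nu_x$ that witnesses convexity against all of $\S^1$ certainly witnesses it against any subset. Applying \Cref{la:convex2} to each arc (using $-T$ and basepoint $\gamma(\theta_2)$ on the second arc, and translating back via $(-T)^\perp=-T^\perp$ and $\langle T^\perp,T\rangle=0$), one obtains that the signed distance
\[
h(t):=\langle T^\perp,\gamma(\theta_1)-\gamma(t)\rangle
\]
has definite sign on each arc, possibly identically zero.

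At this point there are four combinations of signs on the two arcs. Two are precisely the conclusion of the lemma, so the main obstacle is the remaining two cases: $h\geq 0$ on both arcs, or $h\leq 0$ on both. I treat the first, the other being symmetric. Then $h\geq 0$ on all of $\S^1$, so $\gamma(\S^1)$ sits in the closed half-plane $H:=\{p:\langle T^\perp,\gamma(\theta_1)-p\rangle\geq 0\}$, with both $\gamma(\theta_1),\gamma(\theta_2)$ on the boundary line $L=\partial H$.

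The convex hull $C:=\mbox{co}(\gamma(\S^1))$ is then also contained in $H$, and its interior is contained in the open half-plane $H^\circ$. The chord $[\gamma(\theta_1),\gamma(\theta_2)]$ lies in $C$ by convexity and on $L$ by construction, so its interior points cannot lie in the interior of $C$ and must therefore lie in $\partial C$. Invoking \Cref{colgate} (used in the proof of \Cref{n77754fg}), we have $\gamma(\S^1)=\partial C$, so the whole chord is traced by $\gamma$. By injectivity, the $\gamma$-preimage of the chord is a connected subset of $\S^1$ with endpoints $\theta_1,\theta_2$, hence exactly one of the two arcs; on that arc $h\equiv 0$, so $h\leq 0$ holds there trivially, and we have landed in one of the two allowed cases of the conclusion.
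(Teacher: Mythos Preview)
Your reduction via \Cref{la:convex2} on each arc, and your recognition that convexity is an implicit hypothesis, are both correct and match the paper's opening move. The problem is in how you resolve the remaining ``same sign on both arcs'' case: you invoke \Cref{colgate} to obtain $\gamma(\S^1)=\partial C$, but if you inspect the paper's proof of \Cref{colgate} you will see that the inclusion $\partial C\subset\gamma(\S^1)$---precisely the direction you need---is itself proved \emph{using} \Cref{iut4512}. So as written, your argument is circular. (A smaller issue is that \Cref{colgate} is stated for Lipschitz $\gamma$, while the present lemma only assumes continuity; but this is dwarfed by the circularity.)

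The paper handles the ``same sign'' case directly, without reference to the convex hull. Assuming $h\le 0$ on all of $\S^1$ with strict inequality attained somewhere on each arc, one chooses (via continuity and injectivity) points $t^+\in(\theta_1,\theta_2)$ and $t^-\in(\theta_2,\theta_1)$ whose images share the same $T$-coordinate $\lambda\in(0,1)$, with $\gamma(t^+)$ strictly farther from $L$ than $\gamma(t^-)$. Then the convexity condition \eqref{fcdghte} at $t^-$, tested against the four points $\gamma(\theta_1),\gamma(\theta_2),\gamma(t^+),\gamma(t^-)$, forces both the $T$- and $T^\perp$-components of the supporting vector $v_{t^-}$ to vanish, contradicting $|v_{t^-}|=1$. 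Your convex-hull idea could plausibly be salvaged by proving the needed instance of $\partial C\subset\gamma(\S^1)$ (for boundary points lying on the supporting line $L$) by hand, but as it stands the argument leans on a result not yet available at this point in the logical order.
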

\begin{proof}
In view of \Cref{la:convex2}, the only way the claim is false would be that there exists $t^+ \in (\theta_1,\theta_2)$ and $t^- \in (\theta_2,\theta_1)$ such that (the sign is w.l.o.g. negative)
\[
 \langle \gamma(\theta_1)-\gamma(t^+), T^\perp\rangle < 0, \text{ and }\langle \gamma(\theta_1)-\gamma(t^-), T^\perp\rangle < 0
\]
and otherwise we have 
 \[
 \langle \gamma(\theta_1)-\gamma(t), T^\perp\rangle \leq 0 \quad \forall t \in \S^1.
 \]
By continuity and injectivity, we may assume w.l.o.g. that for some $\lambda \in (0,1)$, $0 < \mu_- < \mu_+$ we have
\[
 \gamma(t^+) = \gamma(\theta_1) + \lambda T + \mu_+ T^\perp
\]
and 
\[
 \gamma(t^-) = \gamma(\theta_1) + \lambda T + \mu_- T^\perp
\]

\[
  0 > \langle \gamma(\theta_1)-\gamma(t^-), T^\perp\rangle > \langle \gamma(\theta_1)-\gamma(t^+), T^\perp\rangle.
\]
Set 
\[
 w_1 := \gamma(t^-)-\gamma(t^+)
\]
\[
 w_2 := \gamma(t^-)-\gamma(\theta_1)
\]
\[
 w_3 := \gamma(t^-)-\gamma(\theta_2)
\]
We then observe from $w_1 = \gamma(\theta_1)-\gamma(t^+) - \gamma(\theta_1) - \gamma(T^-)$ we have
\[
 \langle w_1 , T^\perp \rangle <0.
\]
Also we have 
\[
 \langle w_2,T^\perp \rangle >0
\]
and since $w_3 = w_2 -T$ we find 
\[
 \langle w_3,T^\perp \rangle =\langle w_2,T^\perp \rangle > 0.
\]
Moroever we have 
\[
 \langle w_1, T\rangle = 0,
\]
\[
 \langle w_2, T \rangle = \lambda,
\]
\[
 \langle w_3, T \rangle = \lambda -1.
\]

Now let $v_{t^-} \in \R^2$, $|v_{t^-}|=1$ such that 
\[
 \langle v_{t^-},\gamma(t^-)-\gamma(t)\rangle \geq 0 \quad \forall t \in \S^1.
\]
That is, we have 
\[
  \langle v_{t^-},w_i\rangle \geq 0 \quad i=1,2,3
\]
Take $\sigma, \delta \in \R$ such that $v_{t^-} = \sigma T + \delta T^\perp$. Then we have 
\[
  \sigma \underbrace{\langle  T,w_1\rangle}_{=0} + \delta \underbrace{\langle T^\perp ,w_1\rangle}_{< 0} \geq 0 
\]
This implies $\delta = 0$. Then we comp
\[
  \sigma \underbrace{\langle  T,w_2\rangle}_{=\lambda} + \underbrace{\delta}_{=0} \langle T^\perp ,w_2\rangle \geq 0 
\]
\[
  \sigma \underbrace{\langle  T,w_3\rangle}_{=\lambda-1} + \underbrace{\delta}_{=0} \langle T^\perp ,w_3\rangle \geq 0 
\]
Since $\lambda \in (0,1)$ these equalities imply that $\sigma =0$, a contradiction to $|v_{t^-}|=1$.
\end{proof}

\begin{proposition}\label{colgate}
Let $\gamma\in Lip(\S^1,\R^2)$ be injective and satisfying the convexity assumption~\eqref{fcdghte} in $\S^1$. Then, if we define the convex set $E:=\mbox{co}(\gamma(\S^1))$ it holds that 
\begin{equation*}
\partial E=\gamma(\S^1).
\end{equation*} 
\end{proposition}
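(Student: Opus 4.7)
The plan is to prove the two inclusions $\gamma(\S^1)\subseteq\partial E$ and $\partial E\subseteq\gamma(\S^1)$ separately. For the easy inclusion, I fix $x\in\S^1$ and use the convexity assumption \eqref{fcdghte} to obtain $\nu_x\in\S^1$ with $\langle \nu_x,\gamma(x)-\gamma(y)\rangle\geq 0$ for every $y\in\S^1$. Then $\gamma(\S^1)$ lies in the closed half-plane $H_x:=\{q\in\R^2:\langle \nu_x,\gamma(x)-q\rangle\geq 0\}$, so by convexity of $H_x$ we also have $E\subseteq H_x$. Since $\gamma(x)\in\partial H_x\cap E$, every ball around $\gamma(x)$ meets $\R^2\setminus H_x\subseteq\R^2\setminus E$, and consequently $\gamma(x)\in\partial E$.

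For the reverse inclusion, fix $p\in\partial E$ and let $H$ be a supporting line of $E$ at $p$ with outward unit normal $\nu$, so that $E\subseteq\{q:\langle\nu,q-p\rangle\leq 0\}$ and $p\in H$. The intersection $E\cap H$ is a convex subset of the line $H$, hence a (possibly degenerate) segment $[A,B]$ containing $p$. Its endpoints $A,B$ are extreme points of $E$: if $A=(1-\mu)q_1+\mu q_2$ with $\mu\in(0,1)$ and $q_1,q_2\in E$, then the supporting half-plane inequality together with $A\in H$ forces $q_1,q_2\in H$, hence $q_1,q_2\in[A,B]$, and the endpoint property forces $q_1=q_2=A$. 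Since the extreme points of the convex hull of a compact set $K$ are contained in $K$, we obtain $A=\gamma(t_0)$ and $B=\gamma(t_1)$ for some $t_0,t_1\in\S^1$. If $A=B$ then $p=A\in\gamma(\S^1)$, so from now on assume $A\neq B$.

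It now suffices to show $[A,B]\subseteq\gamma(\S^1)$. Set $T:=B-A$; since $\gamma(\S^1)\subseteq E$ lies on one fixed side of $H$, the quantity $\langle T^\perp,\gamma(t_0)-\gamma(t)\rangle$ has the same definite sign for every $t\in\S^1$. Applying \Cref{iut4512} with $\theta_1=t_0$ and $\theta_2=t_1$, the two arcs of $\S^1$ joining $t_0$ to $t_1$ produce opposite signs of $\langle T^\perp,\gamma(t_0)-\gamma(t)\rangle$; combined with the global sign constraint, one of these arcs must in fact satisfy $\langle T^\perp,\gamma(t_0)-\gamma(t)\rangle\equiv 0$. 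Hence $\gamma$ maps that entire arc into the line $H$, and the image is a connected subset of $H\cong\R$ containing $A$ and $B$. By the intermediate value theorem this image contains the segment $[A,B]$, and in particular $p\in\gamma(\S^1)$.

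The main obstacle is the segment-filling step: at a boundary point $p$ of $E$ lying in the relative interior of a flat face $[A,B]\subseteq\partial E$, one must verify that the curve itself traces out this entire segment pointwise, rather than merely passing through its endpoints. The convexity-based sign dichotomy encoded in \Cref{iut4512} is the decisive ingredient, ruling out the possibility that the curve ``cuts a corner'' of $E$ by bypassing any point of a flat portion of the boundary.
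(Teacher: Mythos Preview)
Your proof is correct and follows essentially the same route as the paper's: both arguments establish $\gamma(\S^1)\subset\partial E$ via the supporting half-plane coming from the convexity assumption, and both handle $\partial E\subset\gamma(\S^1)$ by finding two curve points $\gamma(t_0),\gamma(t_1)$ on a supporting line through the given boundary point, then invoking \Cref{iut4512} together with the intermediate value theorem to force one arc of $\gamma$ to trace out the entire segment.

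The only minor difference is in how the two curve points are produced. The paper appeals to the two-point convex-hull representation $E=\{\lambda\gamma(t_1)+(1-\lambda)\gamma(t_2)\}$ (valid because $\gamma(\S^1)$ is connected) and then deduces that the supporting normal is perpendicular to $\gamma(t_2)-\gamma(t_1)$. You instead take the supporting line first, identify $E\cap H$ as a segment $[A,B]$, and argue that its endpoints are extreme points of $E$, hence lie in $\gamma(\S^1)$. Both are standard pieces of convex analysis and lead to the same endgame.
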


\begin{proof}[Proof of Proposition~\ref{colgate}]

We begin by proving that 
\begin{equation}\label{fincu-1}
\gamma(\S^1)\subset \partial E.
\end{equation}
To do so, we first we observe that $E\neq \emptyset$, and  since $\gamma(\S^1)$ is compact, then we can apply Theorem~1.4.3 in~\cite{hiriart2004fundamentals} and obtain that $E$ is compact.

Also, being $\gamma(\S^1)$ a connected set, we can apply Theorem~1.3.7 in~\cite{hiriart2004fundamentals} and obtain that 
\begin{equation}\label{goliardo}
E=\left\lbrace \lambda\gamma(t_1)+(1-\lambda)\gamma(t_2)|\, t_1,t_2\in\S^1\mbox{  and  }\lambda\in [0,1] \right\rbrace.
\end{equation}

In particular, from~\eqref{goliardo} and equation~\eqref{fcdghte} we deduce that for every  $t\in\S^1$ and $P\in\S^1$ there are $t_1,t_2\in \S^1$ and $\lambda\in [0,1]$ such that
\begin{equation*}
\begin{split}
\langle \nu_t, \gamma(t)-P\rangle & =\langle \nu_t, \gamma(t)-\lambda\gamma(t_1)-(1-\lambda)\gamma(t_2)\rangle\\
&=\lambda \langle \nu_t, \gamma(t)-\gamma(t_1)\rangle + (1-\lambda)  \langle \nu_t, \gamma(t)-\gamma(t_2)\rangle\\
&\geq 0,
\end{split}
\end{equation*}
In particular, we have just proved that for every $t\in \S^1$ there exists some $\nu_t\in\S^1$ such that for every $Q\in E$  
\begin{equation*}
\langle \nu_t, \gamma(t)-Q\rangle\geq 0.
\end{equation*}
From this and the fact that $\gamma(t)\in E$, we readily evince~\eqref{fincu-1}.

To conclude the proof of the proposition we claim that
\begin{equation}\label{fincu-2}
\partial E\subset \gamma(\S^1).
\end{equation}
To prove this, let $P\in \partial E$. Then, applying Lemma~4.2.1 in~\cite{hiriart2004fundamentals} we obtain that there exists some $\nu_P \in\S^1$ such that
\begin{equation}\label{federet56}
\langle \nu_P, P-\gamma(t)\rangle\geq 0\quad\mbox{for all}\quad t\in\S^1.
\end{equation}  
According to~\eqref{goliardo}, there are $t_1,t_2\in\S^1$ and $\lambda\in [0,1]$ such that
\begin{equation*}
P=\lambda \gamma(t_1)+(1-\lambda)\gamma(t_2).
\end{equation*}
If either $\gamma(t_1)=\gamma(t_2)$ or $\lambda\in \left\lbrace 0,1 \right\rbrace$ then $P\in \gamma(\S^1)$, and there is nothing to prove. We assume therefore that 
\begin{equation*}
\gamma(t_1)\neq \gamma(t_2)\quad\mbox{and}\quad \lambda\in (0,1).
\end{equation*}
Thus, we observe that 
\begin{equation*}
0\leq \langle  \nu_P, P-\gamma(t_1)\rangle=\langle \nu_P, \gamma(t_2)-\gamma(t_1)\rangle(1-\lambda)
\end{equation*} 
and also
\begin{equation*}
0\leq  \langle  \nu_P, P-\gamma(t_2)\rangle=\langle \nu_P, \gamma(t_1)-\gamma(t_2)\rangle \lambda.
\end{equation*}
From these two equations we evince that if we denote $T:= \frac{\gamma(t_2)-\gamma(t_1)}{\left|\gamma(t_2)-\gamma(t_1)\right|}$
\begin{equation*}
\nu_P\in \left\lbrace T^\perp, -T^\perp\right\rbrace. 
\end{equation*}
and without any loss of generality we can assume that $\nu_p=T^\perp$.

Now, using Lemma~\ref{iut4512} with $\theta_1=t_1$ and $\theta_2=t_2$ we have that either
\begin{equation}\label{gianna}
\begin{cases} \langle \gamma(t_1)-\gamma(t), T^\perp\rangle \geq 0 \quad \forall t \in [t_1,t_2] \quad& \text{and}\\
\langle \gamma(t_1)-\gamma(t), T^\perp\rangle \leq 0 \quad \forall t \in [t_2,t_1] 
\end{cases}
\end{equation}
or
\begin{equation*}
\begin{cases} \langle \gamma(t_1)-\gamma(t), T^\perp\rangle \leq 0 \quad \forall t \in [t_1,t_2] \quad& \text{and}\\
\langle \gamma(t_1)-\gamma(t), T^\perp\rangle \geq 0 \quad \forall t \in [t_2,t_1].
\end{cases}
\end{equation*}
Without loss of generality, we assume that~\eqref{gianna} holds, the second case being analogous. 

On the other hand, as a consequence of~\eqref{federet56}, we also have for every $t\in\S^1$ that
\begin{equation*}
\begin{split}
0\leq \langle\nu_P, P-\gamma(t) \rangle & = \langle T^{\perp}, \left|P-\gamma(t_1)\right|T +\gamma(t_1)-\gamma(t)\rangle\\
&=\langle T^\perp,\gamma(t_1)-\gamma(t)\rangle
\end{split}
\end{equation*}
Thus, in view of this and equation~\eqref{gianna}, we obtain that 
\begin{equation*}
\langle T^\perp, \gamma(t_1)-\gamma(t)\rangle=0\quad\mbox{for all}\quad t\in [t_2,t_1].
\end{equation*}  
In particular, from this we infer the existence of some $\mu\in C([t_2,t_1])$  such that $\gamma(t)=\mu(t) T +\gamma(t_1)$. Since by assumptions we have that $\mu(t_1)=0$ and $\mu(t_2)=\left|\gamma(t_2)-\gamma(t_1)\right|$, then by continuity there exists some $t^*\in [t_2,t_1]$ such that 
\begin{equation*}
\mu(t^*)=(1-\lambda)\left|\gamma(t_2)-\gamma(t_1)\right|.
\end{equation*} 
In particular, for such $t^*$ it holds that
\begin{equation*}
\begin{split}
\gamma(t^*)&=\mu(t^*)T+\gamma(t_1)\\
&=(1-\lambda)\left(\gamma(t_2)-\gamma(t_1)\right)+\gamma(t_1)\\
&=\lambda \gamma(t_1)+(1-\lambda)\gamma(t_2)\\
&=P.
\end{split}
\end{equation*}
We have just proved that $P\in \gamma(\S^1)$, and~\eqref{fincu-2} follows.  
\end{proof}

\begin{proposition}\label{convcurvfunc}
Let $\gamma\in Lip((a,b),\R^2)$, $\left|\gamma'\right|\equiv 1$ a.e. in $(a,b)$,  injective and satisfying the convexity assumption~\eqref{fcdghte} in $(a,b)$. Furthermore, we assume that  there exists some Lipschitz $f:\gamma_1((a,b))\to \R$ such that 
\begin{equation}\label{hosdgter}
\gamma(t)=(\gamma_1(t),f(\gamma_1(t)))^T\quad\mbox{for all}\quad t\in (a,b).    
\end{equation}
Then, up to replacing $\gamma(\cdot)$ with $\gamma((a+b)-\cdot)$, for a.e. $t\in (a,b)$ it holds that 
\begin{equation}\label{FSAQW}
\gamma_1'(t)=\frac{1}{\sqrt{1+f'(\gamma_1(t))^2}}.
\end{equation}
Also, up to rotating $\gamma$ by $\pi$, $f$ is convex. 
\end{proposition}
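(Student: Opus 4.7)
The plan is to establish (1) by a direct chain-rule computation combined with the arc-length constraint $|\gamma'| \equiv 1$, and (2) by reducing to Lemma~\ref{le:cvdfqw}, which has already been proved for continuous convex graphs.

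For (1), I first observe that $\gamma_1$ itself is injective on $(a,b)$: since $\gamma(t)=(\gamma_1(t), f(\gamma_1(t)))$, any identification $\gamma_1(t_1)=\gamma_1(t_2)$ forces $\gamma(t_1)=\gamma(t_2)$, contradicting injectivity of $\gamma$. Continuity and injectivity of $\gamma_1$ imply strict monotonicity; by replacing $\gamma(\cdot)$ with $\gamma((a+b)-\cdot)$ if necessary, I may assume $\gamma_1$ is strictly increasing, so that $\gamma_1'\geq 0$ a.e. The heart of the argument is the chain rule applied to $\gamma_2(t)=f(\gamma_1(t))$: at a.e.\ point $t$ at which $\gamma_1'(t)>0$ and $f$ is differentiable at $\gamma_1(t)$, one gets $\gamma_2'(t)=f'(\gamma_1(t))\,\gamma_1'(t)$; combined with $(\gamma_1')^2+(\gamma_2')^2=1$ this yields $(\gamma_1')^2(1+f'(\gamma_1)^2)=1$, and taking the positive square root produces \eqref{FSAQW}.

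For (2), by the graph representation~\eqref{hosdgter} the image $\gamma((a,b))$ is precisely the graph of $f$, and it is convex in the sense of Definition~\ref{def:convexcurve} since $\gamma$ is. Lemma~\ref{le:cvdfqw} then immediately gives that $f$ is either convex or concave on $\gamma_1((a,b))$. If $f$ is concave, rotating $\gamma$ by $\pi$ about the origin sends $(\gamma_1(t), f(\gamma_1(t)))$ to $(-\gamma_1(t), -f(\gamma_1(t)))$, making the new curve the graph of $\tilde f(x) := -f(-x)$, which is convex as the composition of the convex function $-f$ with the affine map $x\mapsto -x$.

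The main (minor) obstacle is the chain-rule step, where one must justify that $f$ is differentiable at $\gamma_1(t)$ for a.e.\ $t$ with $\gamma_1'(t)\neq 0$. This reduces to a Luzin $N$-type property: applying the area formula to the Lipschitz function $\gamma_1$, the preimage under $\gamma_1$ of the null set of non-differentiability points of $f$, intersected with $\{\gamma_1'>0\}$, has measure zero. One should also remark that the set $\{\gamma_1'=0\}$ itself has measure zero, since the Lipschitz estimate $|f(\gamma_1(t+h))-f(\gamma_1(t))|\leq L|\gamma_1(t+h)-\gamma_1(t)|=o(h)$ forces $\gamma_2'=0$ there, contradicting $|\gamma'|=1$; this is what lets us ignore the bad set and take the strictly positive square root cleanly.
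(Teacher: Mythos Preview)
Your proof is correct and follows essentially the same approach as the paper: both deduce convexity of $f$ from Lemma~\ref{le:cvdfqw}, use injectivity of $\gamma$ to get strict monotonicity of $\gamma_1$, invoke a Luzin-$N$/coarea-type argument to transfer the null set of non-differentiability of $f$ back through $\gamma_1$, and then apply the chain rule together with $|\gamma'|\equiv 1$. Your treatment is in fact slightly more careful than the paper's at one point: the paper asserts $\gamma_1'>0$ a.e.\ directly from ``$\gamma_1$ is a Lipschitz bijection,'' whereas you correctly supply the additional argument (via the Lipschitz bound on $f$ and $|\gamma'|=1$) that rules out $\gamma_1'(t)=0$ on a set of positive measure.
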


\begin{proof}
If $\gamma((a,b))$ is a straight line then the statement easily follows. For this reason, from now on we assume that $\gamma((a,b))$ is not a straight line.

We notice that the convexity of $f$ follows immediately from Lemma~\ref{le:cvdfqw} up to rotating $\gamma$ by $\pi$. 

Now, we evince from the injectivity of $\gamma$ and~\eqref{hosdgter} that $\gamma_1:(a,b)\to \gamma_1((a,b))$ is a Lipschitz bijection. Therefore, up to replacing $\gamma(\cdot)$ with $\gamma((a+b)-\cdot)$, we have that
\begin{equation}\label{eq:croterf}
\gamma_1'>0 \quad \mbox{a.e. in} \quad (a,b). 
\end{equation} 
From this we obtain that if $S\subset \gamma_1((a,b))$ such that $\sigma^1(S)=0$, then using the coarea formula (see for instance~\cite{evans2018measure}) we obtain 
\begin{equation}\label{eq:coare}
\begin{split}
\int_{\gamma_1^{-1}(S)}\left|\gamma_1'(x)\right|\,dx&=\int_{\R}\sigma^0\left(\gamma_1^{-1}(S)\cap \gamma_1^{-1}(y)\right)\,dy\\
&=\int_{\R}\chi_{S}(y)\,dy=0\\
&=\int_{S}\,dy\\
&=0.
\end{split}
\end{equation}
Henceforth, from~\eqref{eq:croterf} and~\eqref{eq:coare} we infer that $\sigma^1(\gamma_1^{-1}(S))=0$. Analogously, one can prove that if $S\subset (a,b)$ and $\sigma^1(S)=0$ then $\sigma^1(\gamma_1(S))=0$. We conclude that for every $S\subset (a,b)$
\begin{equation}\label{eq:zeno}
\sigma^1(S)=0 \iff  \sigma^1(\gamma_1(S))=0.
\end{equation}

Therefore, making use of~\eqref{eq:zeno} and the fact that $\gamma_1$ and $f$ are respectively differentiable a.e. in $(a,b)$ and $\gamma_1(a,b)$, we deduce for a.e. $t\in (a,b)$ 
\begin{equation}\label{eq:sre-67132}
\gamma'(t)=(\gamma_1'(t),f'(\gamma_1(t))\gamma_1'(t))^T, 
\end{equation}
and using the hypothesis that $\left|\gamma'\right|\equiv 1$ a.e. in $(a,b)$ together with~\eqref{eq:croterf} we obtain for a.e. $t\in (a,b)$ that
\begin{equation*}
\gamma_1'(t)=\frac{1}{\sqrt{1+f'(\gamma_1(t))^2}}.
\end{equation*}
This concludes the proof of~\eqref{FSAQW}.\qedhere

\end{proof}

\begin{theorem}[Implicit Function Theorem, see~\cite{clarke1990optimization}]\label{ImFuTh}
Let $\gamma\in Lip([a,b],\R^2)$, injective, $\left|\gamma'\right|\equiv 1$ a.e. and $\gamma$ satisfies the convexity assumption~\eqref{fcdghte} in $[a,b]$. Then, for every $x_0\in (a,b)$ there exists some open neighbourhood  $U_{x_0}\subset(a,b)$ of $x_0$ and a convex function $f$, such that up to rotations we have that 
\begin{equation}\label{DRK}
\gamma(t)=(\gamma_1(t),f(\gamma_1(t)))^T\quad\mbox{for all}\quad t\in U_{x_0},
\end{equation}
and 
\begin{equation}\label{DRK-2}
\gamma \mbox{ is differentiable in }t\in U_{x_0} \iff \mbox{$f$ is differentiable in $\gamma_1(t)$}.
\end{equation}
Moreover, up to replacing $\gamma(\cdot)$ with $\gamma((a+b)-\cdot)$,  for all $t\in U_{x_0}$ where $\gamma$ is differentiable
\begin{equation}\label{FSAQW2}
\nu_t=\frac{1}{\sqrt{1+f'(\gamma_1(t))^2}}(f'(\gamma_1(t)),-1)^T,
\end{equation}
where $\nu_t$ is as in Definition~\ref{def:convexcurve}.
\end{theorem}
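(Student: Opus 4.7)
The proof has three stages: fix a rotation using the normal cone at $\gamma(x_0)$, establish the local graph property by a convex-geometric argument, and extract \eqref{DRK-2} and \eqref{FSAQW2} via \Cref{convcurvfunc}. The case where $\gamma([a,b])$ lies on a line is trivial, so assume not; then $E := \operatorname{co}(\gamma([a,b]))$ is a two-dimensional convex body with nonempty interior, and $\gamma(x_0)\in\partial E$ (by the argument of \Cref{colgate}).

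Let $N := N_{\gamma(x_0)}(E)$ be the normal cone. Choose $\nu \in \S^1 \cap N$ in the relative interior of $N$, rotate so $\nu = (0,-1)^T$, and translate so $\gamma(x_0)=0$. The choice of $\nu$ in the relative interior guarantees $\pm(1,0)^T \notin N$, and \eqref{fcdghte} forces $\gamma_2 \geq 0$ on $[a,b]$.

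The crux is to show $\gamma_1$ is strictly monotone on some neighborhood $U_{x_0}$ of $x_0$ — otherwise, without a correct rotation, $\partial E$ could fold vertically near $\gamma(x_0)$ and the graph structure would fail. Suppose not: then $\gamma_1$ is not injective on any neighborhood of $x_0$, so there exist $s_n \neq t_n$ converging to $x_0$ with $\gamma_1(s_n) = \gamma_1(t_n)$; injectivity of $\gamma$ gives $\gamma_2(s_n) \neq \gamma_2(t_n)$, say $(p_n)_2 < (q_n)_2$ for $p_n := \gamma(s_n)$, $q_n := \gamma(t_n)$. Supporting normals $\mu_n \in N_{p_n}(E) \cap \S^1$ and $\mu_n' \in N_{q_n}(E) \cap \S^1$ satisfy $(\mu_n)_2 \leq 0$ and $(\mu_n')_2 \geq 0$ (test $\mu_n$ against $q_n - p_n$, and $\mu_n'$ against $p_n - q_n$). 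Subsequential limits $\mu, \mu' \in N \cap \S^1$ have $\mu_2 \leq 0 \leq \mu_2'$, and a convex combination of $\mu'$ with $\nu = (0,-1)$ crossing through zero second coordinate yields a horizontal unit vector $\pm(1,0) \in N$, contradicting the choice of $\nu$; the residual case $\mu' = (0,1)$ forces $N$ to contain a line and hence $E$ to be a line, excluded. Setting $I := \gamma_1(U_{x_0})$ and $f := \gamma_2 \circ \gamma_1^{-1}$ on $I$ yields \eqref{DRK}; \Cref{le:cvdfqw} makes $f$ convex or concave, and a further $\pi$-rotation (if needed) makes it convex.

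For \eqref{DRK-2}, the chain rule handles the direction ``$\gamma_1$ differentiable at $t$ and $f$ at $\gamma_1(t)$'' $\Rightarrow$ ``$\gamma$ differentiable at $t$''. Conversely, if $\gamma$ is differentiable at $t$, then after the sign reversal of \Cref{convcurvfunc} and using unit speed one has $\gamma_1'(t) = 1/\sqrt{1+f'(\gamma_1(t))^2} > 0$, whence $\gamma_1^{-1}$ and thus $f = \gamma_2 \circ \gamma_1^{-1}$ are differentiable at $\gamma_1(t)$. If instead $f$ is differentiable at $\gamma_1(t)$, convexity of $f$ makes $f'$ continuous there; combining the a.e.~identity $\gamma_1'(s) = 1/\sqrt{1+f'(\gamma_1(s))^2}$ from \Cref{convcurvfunc} with absolute continuity of $\gamma_1$ and the fundamental theorem of calculus applied to the resulting continuous integrand gives differentiability of $\gamma_1$ at $t$. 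For \eqref{FSAQW2}, \Cref{la:vxinconvexity} forces $\nu_t$ perpendicular to the unit tangent $\gamma'(t) = (1, f'(\gamma_1(t)))/\sqrt{1+f'(\gamma_1(t))^2}$, so $\nu_t = \pm (f'(\gamma_1(t)), -1)/\sqrt{1+f'(\gamma_1(t))^2}$; inserting either sign into $\nu_t \cdot (\gamma(t) - \gamma(y)) \geq 0$ for $y$ nearby reduces to the subgradient inequality for the convex $f$ with the corresponding sign, and only the $+$ choice is compatible with convexity.
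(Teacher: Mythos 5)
Your treatment of \eqref{DRK} takes a genuinely different route from the paper's: the paper uses the signed distance function to $\gamma([a,b])$ and Clarke's implicit function theorem for Lipschitz maps, establishing that the generalized gradient $\partial \tilde d(0)$ lies in a half-plane and then appealing to the nonsmooth IFT. You instead argue directly from the normal cone and a contradiction with a ``vertical fold'', which is a more elementary and self-contained convex-geometric argument and sidesteps Clarke's machinery entirely. That is a legitimate and arguably cleaner route; note though that it needs the upper semicontinuity of $q\mapsto N_q(E)$ to pass the supporting normals $\mu_n,\mu_n'$ to a limit in $N_{\gamma(x_0)}(E)$, which you use silently.

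Two points need tightening. First, the claim that ``choosing $\nu$ in the relative interior of $N$ guarantees $\pm(1,0)^T\notin N$'' is false as stated: if the arc $N\cap\S^1$ has angular length close to $\pi$ and $\nu$ is chosen near one endpoint, $\pm(1,0)$ can land inside $N$ after the rotation. You need to pick $\nu$ to be (say) the midpoint of $N\cap\S^1$; since $E$ is two-dimensional the arc has length strictly less than $\pi$, so the endpoints are then at angular distance $<\pi/2$ from $\nu$, which rules out $\pm(1,0)$.

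Second, the direction ``$\gamma$ differentiable at $t\Rightarrow f$ differentiable at $\gamma_1(t)$'' in \eqref{DRK-2} is circular as written: the identity $\gamma_1'(t)=1/\sqrt{1+f'(\gamma_1(t))^2}$ from \Cref{convcurvfunc} is an a.e.\ identity in $t$, and moreover evaluating it at the specific $t$ presupposes exactly the existence of $f'(\gamma_1(t))$ that you are trying to establish. The fix is what the paper actually does: use that $f$ is convex, hence locally Lipschitz on $\gamma_1(U_{x_0})$, so the a.e.\ formula gives $\gamma_1'\geq C:=1/\sqrt{1+M^2}>0$ a.e.\ on a smaller interval, and hence $\gamma_1(t+h)-\gamma_1(t)\geq Ch$ by integration; differentiability of $\gamma_1$ at $t$ then yields $\gamma_1'(t)\geq C>0$ without invoking $f'$ at that point, after which the inverse function theorem and the chain rule apply. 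The remaining parts of your \eqref{DRK-2} and your \eqref{FSAQW2} argument mirror the paper's (Lebesgue differentiation for the converse; \Cref{la:vxinconvexity} and the subgradient inequality for the sign of $\nu_t$) and are fine.
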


\begin{proof}
If $\gamma([a,b])$ coincides with a straight line then the statement easily follows. Therefore, from now on we assume that $\gamma([a,b])$ is not a line.

We begin by setting the distance function
\begin{equation*}
d(y):=\inf\left\lbrace   \left|x-y\right||x\in \gamma([a,b]) \right\rbrace.
\end{equation*}
We fix some $x_0\in (a,b)$, and up to a translation we assume that $\gamma(x_0)=0$. Then, for $\epsilon>0$ small enough we consider the ball $B_\epsilon$, and define the sets
\begin{equation*}
B_{\epsilon}^+:=\left\lbrace x\in B_\epsilon|\, \nu_y\cdot (\gamma(y)-x)\geq 0 \mbox{ for all }y\in [a,b]\mbox{ and  $\nu_y$ as in~\eqref{fcdghte}} \right\rbrace,
\end{equation*}
and  
\begin{equation*}
B_{\epsilon}^-:=B_{\epsilon}\setminus B_{\epsilon}^{+}.
\end{equation*}
Then, in $B_{\epsilon}$ we define the function 
\begin{equation*}
\tilde{d}(x):=\begin{dcases}
-d(x)\quad\mbox{for all}\quad x\in B_{\epsilon}^+\\
d(x)\quad\mbox{for all}\quad x\in B_{\epsilon}^-.
\end{dcases}
\end{equation*}
Being $\gamma([a,b])$ closed, we have that 
\begin{equation}\label{kresta}
p\in \gamma([a,b]) \iff d(p)=0 \iff \tilde{d}(p)=0.
\end{equation}
Also, thanks to \cite[Proposition~2.4.1 ]{clarke1990optimization} we have that $d$ is Lipschitz. From this, it follows that $\tilde{d}$ is Lipschitz as well and therefore $\nabla d$ and  $\nabla \tilde{d}$ exist a.e. in $\R^2$. Moreover, thanks to \cite[Theorem 2.5.1]{clarke1990optimization} we can define the generalized gradient of $\tilde{d}$ in $0$ as
\begin{equation}\label{gengrad}
\partial \tilde{d}(0)=\mbox{co}\left\lbrace \lim_{n}\nabla \tilde{d}(x_n)|\, x_n\to 0,\, x_n\notin \Omega_{\tilde{d}}       \right\rbrace,
\end{equation}
where $\Omega_{\tilde{d}}\subset\R^2$ is any subset of measure zero containing the set of points where $\tilde{d}$ is not differentiable. Similarly, we denote by $\Omega_d$ any subset of measure zero containing all the points where $d$ is not differentiable. 

Being $\gamma$ Lipschitz, we can assume that $\gamma([a,b])\subset \Omega_{\tilde{d}}\subset \Omega_d$.

Now, we claim that 
\begin{equation}\label{clforimfv}
\nabla d(x)\neq 0\quad\mbox{for all}\quad x\in \R^2\setminus \Omega_d. 
\end{equation}
As a matter of fact, being $\gamma([a,b])$ closed, we observe that for every point $x\in\R^2$ there exists some point $c_x\in \gamma([a,b])$, not necessarily unique, such that 
\begin{equation*}
d(x)=\left|c_x-x\right|.
\end{equation*} 
Then, if $x\in \R^2\setminus \Omega_d$ and we set $v:=c_x-x$ we obtain
\begin{equation*}
\begin{split}
\nabla d(x)\cdot v&=\lim_{t\to 0^+} \frac{d(x+t(c_x-x))-d(x)}{t}\\
&=\lim_{t \to 0^+} \frac{(1-t)\left|c_x-x\right|-\left|c_x-x\right|}{t}\\
&=-\left|c_x-x\right|.
\end{split}
\end{equation*} 
This proves~\eqref{clforimfv}.  Finally, we apply  in~\cite[Proposition~2.5.4]{clarke1990optimization}, and obtain that for every $x\in \R^2 \setminus \Omega_d $ we have that there exists a unique closest point $c_x$ (i.e. the projection is well-defined) and 
\begin{equation*}
\nabla d(x)=\frac{x-c_x}{\left|x-c_x\right|}.
\end{equation*}
In particular, we deduce that 
\begin{equation*}
\nabla \tilde{d}(x)=\begin{dcases}
\frac{-x+c_x}{\left|c-c_x\right|}\quad\mbox{for all}\ x\in B_{\epsilon}^{+}\setminus \Omega_d\\
\frac{x-c_x}{\left|c-c_x\right|}\quad\mbox{for all}\ x\in B_{\epsilon}^{-}\setminus \Omega_d.
\end{dcases}
\end{equation*}
Also, from this and the convexity condition~\eqref{fcdghte} we observe that  for all $x\in B_{\epsilon}\setminus \Omega_d$ it holds
\begin{equation}\label{wser}
\nabla\tilde{d}(x)\cdot (c_x-\gamma(y))\geq 0\quad\mbox{for all}\quad y\in [a,b].
\end{equation}
Also, we observe that if $x_n$ is as in~\eqref{gengrad} then we have that 
\begin{equation*}
\lim_{n}x_n=\lim_{n} c_{x_n}=0=\gamma(x_0).
\end{equation*}
In particular, from this last equation and~\eqref{wser}  we obtain 
\begin{equation*}
\lim_{n}\nabla \tilde{d}(x_n)\cdot (\gamma(x_0)-\gamma(y))\geq 0 \quad\mbox{for all}\quad y\in [a,b].
\end{equation*}
Exploiting this and~\eqref{gengrad} we evince that
\begin{equation}\label{grocj}
\partial \tilde{d}(0)\subset \mbox{co}\left\lbrace \nu_{x_0}\in\S^1|\, \nu_{x_0} \mbox{   satisfies   }~\eqref{fcdghte}\mbox{   in   } x_0 \right\rbrace=:N_x
\end{equation}
Also, we notice that $0\notin \partial N_x$. Indeed, if this was the case we would obtain the existence of two vectors $v,w\in \S^1$ such that $v=-w$ and
\begin{equation*}
\langle v,\gamma(x_0)-\gamma(y)\rangle\geq 0 \mbox{ and } \langle w,\gamma(x_0)-\gamma(y)\rangle\geq 0,
\end{equation*}
giving that $\gamma$ is a line, and providing a contradiction. 

Since $N_x$ is convex, closed, contained in $B_1$ and $0\notin N_x$, then up to a rotation we can assume that 
\begin{equation*}
\mbox{for every }v=(v_1,v_2)^T\in N_x \mbox{ we have that }v_2<0.
\end{equation*} 
In particular, making use of~\eqref{grocj} we obtain
\begin{equation*}
\mbox{for every }v=(v_1,v_2)^T\in \partial \tilde{d}(0) \mbox{ we have that }v_2<0.
\end{equation*}
As a consequence of this, we are entitled now to apply the Implicit Function Theorem for Lipschitz maps in \cite[Section~7.1]{clarke1990optimization}, and obtain the existence of some neighbourhood $V_{0}\subset B_{\epsilon}$ of $0$ and a Lipschitz function $f:V_{0}\to \R$ such that 
\begin{equation*}
\tilde{d}(y_1,f(y_1))=0\quad\mbox{for all}\quad y_1\in V_{0}.  
\end{equation*} 
Now, from~\eqref{kresta} and this last identity we deduce that for all $t\in (a,b)$ such that $\gamma_1(t)\in V_{0}$ it holds that 
\begin{equation*}
\gamma(t)=(\gamma_1(t),f(\gamma_1(t)))^T.
\end{equation*}
Being $\gamma_1$ continuous we can define $U_{x_0}=\gamma_1^{-1}(V_{0})$ and obtain~\eqref{DRK}. In particular, thanks to Lemma~\ref{le:cvdfqw} we obtain that, up to rotating $\gamma$ by $\pi$, $f$ is convex. This concludes the proof of~\eqref{DRK}.

Now, we need to prove~\eqref{DRK-2}. To do so, first we  assume that $\gamma$ is differentiable in $t\in U_{x_0}$. Being $f$ convex we have that 
\begin{equation}\label{1dert}
\lim_{h\searrow 0} \frac{f(\gamma_1(t)+h)-f(\gamma_1(t))}{h}\mbox{ and } \lim_{h\nearrow 0} \frac{f(\gamma_1(t)+h)-f(\gamma_1(t))}{h}
\end{equation}
exist. To prove the differentiability of $f$ in $\gamma(t)$ we only need to prove that the two limits coincide. To this end, thanks to Proposition~\ref{convcurvfunc}, up to replacing $\gamma(\cdot)$ with $\gamma((a+b)-\cdot)$, we can assume that~\eqref{FSAQW} holds. Thus, if we set $\epsilon>0$ such that $B_\epsilon(t)\subset U_{x_0}$, then for every $h\in (0,\epsilon)$ it holds
\begin{equation*}
\gamma_1(t+h)-\gamma_1(t)=\int_{t}^{t+h}\frac{dz}{\sqrt{1+f'(\gamma_1(z))^2}}\geq \frac{1}{\sqrt{1+M^2}}h=:C h,    
\end{equation*}
where $M:=\left\|f'\right\|_{L^\infty(B_\epsilon(t))}$ and so $C\in (0,1)$. Analogously, we can prove that for every $h\in(0,\epsilon)$
\begin{equation*}
\gamma_1(t-h)\leq -C h+\gamma_1(t)
\end{equation*}
Making use of this and the convexity of $f$ we evince 
\begin{equation*}
\begin{split}
\frac{f(\gamma_1(t)+C h)-f(\gamma_1(t))}{C h}&\leq \frac{f(\gamma_1(t+h))-f(\gamma_1(t))}{\gamma_1(t+h)-\gamma_1(t)}\\
&=\frac{f(\gamma_1(t+h))-f(\gamma_1(t))}{h}\frac{h}{\gamma_1(t+h)-\gamma_1(t)}\\
&=\frac{\gamma_2(t+h)-\gamma_2(t)}{h}\frac{h}{\gamma_1(t+h)-\gamma_1(t)}.
\end{split}
\end{equation*}
Similarly, we have that
\begin{equation*}
\begin{split}
\frac{f(\gamma_1(t)-Ch)-f(\gamma_1(t))}{-C h}&\geq \frac{f(\gamma_1(t-h))-f(\gamma_1(t))}{\gamma_1(t-h)-\gamma_1(t)}\\
&= \frac{f(\gamma_1(t-h))-f(\gamma_1(t))}{-h}\frac{-h}{\gamma_1(t-h)-\gamma_1(t)}\\
&=\frac{\gamma_2(t-h)-\gamma_2(t)}{-h}\frac{-h}{\gamma_1(t-h)-\gamma_1(t)}.\\
\end{split}
\end{equation*}
Finally, since $\gamma$ is differentiable in $t$ and by convexity 
\begin{equation*}
\frac{f(\gamma_1(t)-C h)-f(\gamma_1(t))}{-Ch}\leq \frac{f(\gamma_1(t)+Ch)-f(\gamma_1(t))}{C h},
\end{equation*}
we obtain that the limits in~\eqref{1dert} coincide, proving that $f$ is differentiable in $\gamma_1(t)$.

On the other hand, assume that $f$ is differentiable in $\gamma_1(t)$. Then, using convexity we can define for every $z\in V_0$
\begin{equation*}
f_{+}'(z):=\lim_{h\searrow 0}\frac{f(z+h)-f(z)}{h}\mbox{    and   }f_{-}'(z):=\lim_{h\nearrow 0}\frac{f(z)-f(z-h)}{h}.
\end{equation*}
In particular, by convexity we know that $f'(z)=f_{+}'(z)$ and $f'(z)=f_{-}'(z)$ for a.e. $z\in V_{0}$. 

In particular, it holds that
\begin{equation}\label{eq:enzima}
f'(\gamma_1(t))=f_{+}'(\gamma_1(t))=f_{-}'(\gamma_1(t)) 
\end{equation}
and $f_{+}'$ and $f_{-}'$ are both non decreasing. Then, for $l>t$ and every $h>0$ small enough 
\begin{equation*}
0 \leq f_{+}'(\gamma_1(l))-f'(\gamma_1(t))\leq \frac{f(\gamma_1(l)+h)-f(\gamma_1(l))}{h}-f'(\gamma_1(t)).
\end{equation*}
Taking the limit for $l \searrow z$, we obtain for every $h>0$ small enough
\begin{equation*}
0\leq \lim_{l \to t^+} f_{+}'(\gamma_1(l))-f'(\gamma_1(t))\leq \frac{f(\gamma_1(t)+h)-f(\gamma_1(t))}{h}-f'(\gamma_1(t)),
\end{equation*}
and taking the limit for $h\searrow 0$ we obtain that 
\begin{equation}\label{eq:mjpjip}
\lim_{l \searrow t} \left|f_{+}'(\gamma_1(l))-f'(\gamma_1(t))\right|=0.
\end{equation}
Analogously, we can prove that 
\begin{equation}\label{eq:mqpqmp}
\lim_{l \nearrow t} \left|f'(\gamma_1(t))-f_{-}'(\gamma_1(l))\right|=0.
\end{equation}
Thus, if we define the function
\begin{equation*}
\hat{f}'(\gamma_1(z)):=\begin{dcases}
f_{+}'(\gamma_1(z))\quad\mbox{for}\quad z>t\\
f_{-}'(\gamma_1(z))\quad\mbox{for}\quad z\leq t,
\end{dcases}
\end{equation*}
as a consequence of~\eqref{eq:mjpjip} and~\eqref{eq:mqpqmp} we have that $\hat{f}'(\gamma_1)$ is continuous in $t$. Also, recall that by definition $\hat{f}'(\gamma_1(l))=f'(\gamma_1(l))$ for a.e. $l\in U_{x_0}$.

Therefore, using this and~\eqref{FSAQW} we have if $h\in (-\delta,\delta)$ for $\delta>0$ small enough
\begin{equation*}
\begin{split}
\frac{\gamma_1(t+h)-\gamma_1(t)}{h}&=\frac{1}{h}\int_{t}^{t+h}\frac{1}{\sqrt{1+f'(\gamma_1(z))^2}}\,dz\\
&=\frac{1}{h}\int_{t}^{t+h}\frac{1}{\sqrt{1+\hat{f}'(\gamma_1(z))^2}}\,dz.
\end{split}
\end{equation*}
Since $\hat{f}'(\gamma_1(t))$ is continuous in $t$, we have that $t$ is a Lebesgue point and thanks to the Lebesgue differentiation Theorem and equation~\eqref{eq:enzima} 
\begin{equation*}
\lim_{h\to 0}\frac{\gamma_1(t+h)-\gamma_1(t)}{h}=\frac{1}{\sqrt{1+\hat{f}'(\gamma_1(t))^2}}=\frac{1}{\sqrt{1+f'(\gamma_1(t))^2}}
\end{equation*}
and we conclude.

Now, we prove~\eqref{FSAQW2}. To do so, we notice that since $f$ is convex then for all $x \in \gamma_1((a,b))$ where $f$ is differentiable and every $y\in \gamma_1((a,b))$ one has that 
\begin{equation*}
\left\langle \begin{pmatrix}
f'(x) \\-1 
\end{pmatrix}, \begin{pmatrix}
x\\f(x)    
\end{pmatrix}-\begin{pmatrix}
 y\\ f(y)   
\end{pmatrix} \right\rangle \geq 0.    
\end{equation*}
In view of~\eqref{DRK-2}  this readily implies that for all $t\in (a,b)$ where $\gamma$ is differentiable and every $t_1\in (a,b)$ it holds that 
\begin{equation*}
\frac{1}{\sqrt{1+f'(\gamma_1(t))^2}}\left\langle \begin{pmatrix}
f'(\gamma_1(t)) \\-1 
\end{pmatrix}, \begin{pmatrix}
\gamma_1(t)\\f(\gamma_1(t))    
\end{pmatrix}-\begin{pmatrix}
 \gamma_1(t_1)\\ f(\gamma_1(t_1))   
\end{pmatrix} \right\rangle. 
\end{equation*}
We also notice that thanks to Lemma~\ref{la:vxinconvexity} for all $t\in (a,b)$ where $\gamma$ is differentiable there exists a unique vector $\nu_t\in\S^1$ such that for every $t_1\in (a,b)$ it holds that
\begin{equation*}
\langle\nu_t,\gamma(t)-\gamma(t_1)\rangle\geq 0.    
\end{equation*}
Therefore, we have proved that for all $t\in (a,b)$ where $\gamma$ is differentiable  
\begin{equation*}
\nu_t=\frac{1}{\sqrt{1+f'(\gamma_1(t))^2}} \begin{pmatrix}
f'(\gamma_1(t)) \\-1 
\end{pmatrix}   
\end{equation*}
concluding the proof of~\eqref{FSAQW}.
\end{proof}

In view of Proposition~\ref{convcurvfunc} and \Cref{ImFuTh}  we obtain
\begin{corollary}\label{uniqnorma}
Let $\gamma\in Lip(\S^1,\R^2)$ be a homeomorphism, $\left|\gamma'\right|\equiv 1$ and $\gamma$ satisfies~\eqref{fcdghte} in some $(a,b)\subset\S^1$. Then, if $\gamma$ satisfies~\eqref{conclock}, for every $x\in (a,b)$ where $\gamma$ is differentiable it holds
\begin{equation}\label{depriv}
\nu_x=
\gamma'(x)^\perp.
\end{equation}
\end{corollary}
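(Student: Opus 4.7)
The plan is to unpack \Cref{ImFuTh} and \Cref{convcurvfunc} at a point of differentiability $x \in (a,b)$ and compute both sides of \eqref{depriv} directly from the graph representation they provide.

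First I would fix $x \in (a,b)$ where $\gamma$ is differentiable and apply \Cref{ImFuTh} at $x$. Up to a rigid motion of $\gamma$, this yields a neighbourhood $U_x \subset (a,b)$ and a convex Lipschitz $f$ such that
\[
\gamma(t) = (\gamma_1(t), f(\gamma_1(t)))^T, \qquad t \in U_x,
\]
and \eqref{DRK-2} guarantees that $f$ is differentiable at $\gamma_1(x)$. Combining with \Cref{convcurvfunc} I obtain (up to the orientation reversal $\gamma(\cdot) \mapsto \gamma((a+b)-\cdot)$ mentioned there) the identity
\[
\gamma_1'(x) = \frac{1}{\sqrt{1+f'(\gamma_1(x))^2}},
\]
so that, using \eqref{eq:sre-67132},
\[
\gamma'(x) = \frac{1}{\sqrt{1+f'(\gamma_1(x))^2}} \begin{pmatrix} 1 \\ f'(\gamma_1(x)) \end{pmatrix}.
\]
Applying the definition $v^\perp = (v_2,-v_1)^T$ gives
\[
\gamma'(x)^\perp = \frac{1}{\sqrt{1+f'(\gamma_1(x))^2}} \begin{pmatrix} f'(\gamma_1(x)) \\ -1 \end{pmatrix}.
\]
On the other hand, \eqref{FSAQW2} in \Cref{ImFuTh} yields exactly the same vector for $\nu_x$, so \eqref{depriv} follows at $x$.

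The only real subtlety is the bookkeeping of orientations. Both \Cref{ImFuTh} and \Cref{convcurvfunc} allow reversing the parametrization $\gamma(\cdot) \mapsto \gamma((a+b)-\cdot)$, and the local graph representation is established only up to a rotation. The assumption \eqref{conclock} that $\gamma$ is parametrized counter-clockwise fixes the orientation uniquely, and I would argue that this rules out the reversal in the formulas above: if one had instead $\gamma_1'(x) = -1/\sqrt{1+f'(\gamma_1(x))^2}$, the curve would traverse its image clockwise in the rotated frame used in \Cref{ImFuTh}, contradicting \eqref{conclock} after transporting the orientation back through the rigid motion. Since both $v \mapsto v^\perp$ and the map $v \mapsto \nu_v$ transform in the same equivariant way under rotations, the identity \eqref{depriv} is preserved when we undo the rotation used to set up the graph representation.

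The main obstacle is exactly this orientation check, i.e.\ verifying that the counter-clockwise hypothesis \eqref{conclock} genuinely excludes the reversal clause of \Cref{convcurvfunc}; the rest is algebraic bookkeeping. Uniqueness of $\nu_x$ (which gives the ``the'' in the statement) was already recorded in \Cref{la:vxinconvexity} at points of differentiability, provided $\gamma(\S^1)$ is not a straight segment, and the degenerate straight-line case of \Cref{uniqnorma} is excluded because $\gamma$ is a homeomorphism of $\S^1$.
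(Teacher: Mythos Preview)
Your proposal is correct and follows exactly the approach the paper intends: the corollary is stated immediately after \Cref{ImFuTh} and \Cref{convcurvfunc} with only the remark ``In view of Proposition~\ref{convcurvfunc} and \Cref{ImFuTh} we obtain'', and you have simply spelled out the algebra behind this citation, including the orientation bookkeeping that \eqref{conclock} handles.
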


\section{Integration formula for the mean curvature -- Proof of Proposition~\ref{salame}}\label{catamarano}

\begin{proof}[Proof of Proposition~\ref{salame}]
Thanks to the Implicit Function Theorem~\ref{ImFuTh} for every $z\in \S^1$ there exists some neighbourhood $U_z\subset \S^1$ and a convex function $f\in Lip(\R)$ such that (up to rotations) it holds that 
\begin{equation*}
\gamma(t)=(\gamma_1(t),f(\gamma_1(t)))\quad\mbox{for all}\quad t\in U_z.
\end{equation*}
Also, we notice that for every $x\in \gamma(U_z)$ there exists some $\epsilon_x>0$ such that 
\begin{equation}\label{cod-pio-87}
B_{\epsilon_x}(x)\cap E= \left\lbrace (y_1,y_2)\in B_{\epsilon_x}(x)|\,f(y_1)\leq y_2  \right\rbrace.
\end{equation}

Being $f$ convex, thanks to Alexandrov Theorem, we obtain that $f$ is twice differentiable a.e. in $\gamma_1(U_z)$. Let us stress what this actually means: for a.e. $p \in \gamma_1(U_z)$ there exists $a, b \in \R$ such that 
\begin{equation}\label{eq:alexandrov:aslkdjasi}
 \lim_{q \to p} \frac{f(q)-f(p) - a(q-p)-\frac{b}{2} (q-p)^2 }{|q-p|^2}=0.
\end{equation}

Thus, we denote by 
\begin{equation*}
\mathcal{A}_f:=\left\lbrace p\in \gamma_1(U_z)|\,\mbox{$f$ is twice differentiable in $p$} \right\rbrace.  
\end{equation*}
Let $\gamma_1(x)\in \mathcal{A}_f$ be fixed. Also, up to translations and rotations we can assume w.l.o.g. that $\gamma(x)=0$ and $\gamma'(x)=(1,0)$.

Now, we compute the nonlocal mean curvature in $\gamma(x)=0$. 
\begin{equation*}
\begin{split}
H_{\partial E}^s(0)&= P.V.\int_{\R^2} \frac{\chi_{E^c}(y)-\chi_{E}(y)}{\left|y\right|^{2+s}}\,dy\\
&=\lim_{\epsilon\to 0^+}\int_{\R^2\setminus B_{\epsilon}} \frac{\chi_{E^c}(y)-\chi_{E}(y)}{\left|y\right|^{2+s}}\,dy\\
&=\lim_{\epsilon\to 0^+}\left(\int_{E^c\setminus B_{\epsilon}}\frac{dy}{\left|y\right|^{2+s}}- \int_{E\setminus B_{\epsilon}}\frac{dy}{\left|y\right|^{2+s}} \right).
\end{split}
\end{equation*}
Then, we observe that for every $y\in\R^2\setminus \left\lbrace 0\right\rbrace$
\begin{equation*}
\mbox{div}_y \frac{y}{\left|y\right|^{2+s}}=-\frac{s}{\left|y\right|^{2+s}}. 
\end{equation*}

Since by construction $E$ is a Lipschitz set, we have that for $\sigma^1$ a.e. $y\in \partial E$ the outward unit normal $\nu_E(y)$ at $E$ in $y$ is well defined. Note that $\nu_{E}(y)=-\nu_{E^c}(y)$. Thus, applying the Divergence Theorem for Lipschitz sets we obtain that 
\begin{equation*}
\begin{split}
H_{\partial E}^s(0)&=\frac{1}{s}\lim_{\epsilon\to 0^+}\bigg(-\int_{E^c\setminus B_{\epsilon}}\mbox{div}_y\frac{y}{\left|y\right|^{2+s}}\,dy +\int_{E\setminus B_{\epsilon}}\mbox{div}_y\frac{y}{\left|y\right|^{2+s}}\,dy \bigg)\\
&=\frac{1}{s}\lim_{\epsilon\to 0^+}\bigg(-\int_{\partial\left(E^c\setminus B_{\epsilon}\right)}\nu_{E^c\setminus B_{\epsilon}}(y)\cdot\frac{y}{\left|y\right|^{2+s}}\,d\sigma^1+\int_{\partial\left(E\setminus B_{\epsilon}\right)}\nu_{E\setminus B_\epsilon}(y)\cdot\frac{y}{\left|y\right|^{2+s}}\,d\sigma^1 \bigg)\\
&=\frac{1}{s}\lim_{\epsilon\to 0^+}\bigg(-\int_{\partial E^c\setminus B_{\epsilon}}\nu_{E^c}(y)\cdot\frac{y}{\left|y\right|^{2+s}}\,d\sigma^1-\int_{ E^c\cap \partial B_{\epsilon}}\nu_{B_{\epsilon}}(y)\cdot\frac{y}{\left|y\right|^{2+s}}\,d\sigma^1\\
&+\int_{\partial E\setminus B_{\epsilon}}\nu_{E}(y)\cdot\frac{y}{\left|y\right|^{2+s}}\,d\sigma^1+\int_{E\cap \partial B_{\epsilon}}\nu_{B_\epsilon}(y)\cdot\frac{y}{\left|y\right|^{2+s}}\,d\sigma^1 \bigg)\\
&=\frac{1}{s}\lim_{\epsilon\to 0^+}\bigg(2\int_{\partial E\setminus B_\epsilon} \nu_{E}(y)\cdot \frac{y}{\left|y\right|^{2+s}}\,d\sigma^1             + \frac{1}{\epsilon^{1+s}}\left(-\sigma^1\left(E\cap \partial B_{\epsilon}\right)+\sigma^1(E^c\cap \partial B_{\epsilon})\right)\bigg)\\
&=\frac{1}{s}\lim_{\epsilon\to 0^+}\bigg(2\int_{\partial E\setminus B_\epsilon} \nu_{E}(y)\cdot \frac{y}{\left|y\right|^{2+s}}\,d\sigma^1 + \frac{1}{\epsilon^{1+s}}\left(-2\pi\epsilon+2\sigma^1(E^c\cap \partial B_{\epsilon})\right)\bigg).
\end{split}
\end{equation*}
Now, assuming that $\epsilon$ is small enough, by convexity of $E$ we have that 
\begin{equation}\label{cd101-oki}
\sigma^1(E^c\cap \partial B_\epsilon)\geq \sigma^1(\left\lbrace P_2<0 \right\rbrace \cap \partial B_\epsilon)=\pi\epsilon.
\end{equation}
Also, thanks to~\eqref{cod-pio-87}, for some $c>0$ uniform with respect to $\epsilon$, we evince that
\begin{equation}\label{ssensolikjh}
\begin{split}
\sigma^1(E^c\cap \partial B_\epsilon)&=\sigma^1(\left\lbrace  P_2<f(P_1)\right\rbrace\cap \partial B_\epsilon)\\
&=\pi\epsilon+\sigma^1(\left\lbrace 0<P_2<f(P_1)\right\rbrace\cap \partial B_\epsilon)\\
&\leq \pi\epsilon+ c\left(f(\epsilon)+f(-\epsilon)\right).
\end{split}
\end{equation}
Thanks to the twice differentiability of $f$ in the origin, \eqref{eq:alexandrov:aslkdjasi}, and the assumption that $f(0)=f'(0)=0$, we obtain that 
\begin{equation*}
\begin{split}
f(\epsilon)=f''(0)\epsilon^2+o(\epsilon^2).
\end{split}
\end{equation*}
Similarly, one also computes 
\begin{equation*}
f'(-\epsilon)= f''(0)\epsilon^2+o(\epsilon^2).
\end{equation*}
Therefore, using these last two equations,~\eqref{cd101-oki} and~\eqref{ssensolikjh} we infer that
\begin{equation*}
\pi \epsilon\leq \sigma^1(E^c\cap \partial B_{\epsilon})\leq \pi \epsilon+ O(\epsilon^2)
\end{equation*}
These estimates allow us to conclude that 
\begin{equation*}
0 \leq \left(-2\pi\epsilon+2\sigma^1(E^c\cap \partial B_{\epsilon})\right)\leq O(\epsilon^2),
\end{equation*}
from which we evince that 
\begin{equation*}
\lim_{\epsilon\to 0^+}\frac{1}{\epsilon^{1+s}}\left(-2\pi\epsilon+2\sigma^1(E^c\cap \partial B_{\epsilon})\right)=0.
\end{equation*}
In particular, this means that 
\begin{equation*}
\begin{split}
H_{\partial E}^s(0)&=\frac{2}{s}\lim_{\epsilon\to 0^+} \int_{\partial E \setminus B_\epsilon}\nu_{E}(y)\cdot \frac{y}{\left|y\right|^{2+s}}\,d\sigma^1\\
&=\frac{2}{s}\lim_{\epsilon\to 0^+}\int_{\S^1\setminus \gamma^{-1}(B_\epsilon\cap \gamma(\S^1))}n(\gamma(y))\cdot\frac{\gamma(y)}{\left|\gamma(y)\right|^{2+s}}\,dy\\
&=\frac{2}{s}\lim_{\epsilon\to 0^+}\int_{\S^1 \setminus B_{\epsilon}(x)}n(\gamma(y))\cdot\frac{\gamma(y)}{\left|\gamma(y)\right|^{2+s}}\,dy\\
&=: \frac{2}{s}\int_{\S^1}n(\gamma(y))\cdot\frac{\gamma(y)}{\left|\gamma(y)\right|^{2+s}}\,dy
\end{split}
\end{equation*}
where the penultimate identity is due to the fact that $f'\in L^\infty(B_{\epsilon})$.
\end{proof}

\begin{remark}\label{salamegen}
It is important to observe that~\eqref{-fgrop0-0} holds true also under different hypothesis on $E$ and $\gamma$. 

For instance, let $E\subset\R^2$ be convex and with nonempty interior. Also, let $\gamma:\R\to \R^2$ be  BiLipschitz and $\left|\gamma'\right|=1$ a.e. in $\R$, satisfying  the convexity assumption in~\eqref{fcdghte}  in $\R$. Then, thanks to Theorem~\ref{ImFuTh}, up to a  rotation of $\gamma$, for every $x_0\in\R$ there exists some neighbourhood $U_{x_0}$ and a convex function $f:\gamma_1(U_{x_0})\to \R$ such that 
\begin{equation*}
\gamma(t)=\left(\gamma_1(t),f(\gamma_1(t))\right)\quad\mbox{for all}\quad t \in U_{x_0}.   
\end{equation*}
Then, following almost verbatim the proof of Proposition~\ref{salame}, we obtain that for all $x\in U_{x_0}$ where $f$ admits a second order Taylor expansion it holds that 
\begin{equation*}
H_{\partial E}^s((x,f(x)))=\int_{\R}n(\gamma(t))\cdot \frac{\gamma(t)-(x,f(x))}{\left|\gamma(t)-(x,f(x))\right|^{2+s}}\,dt,   
\end{equation*}
where (up to replacing $\gamma(\cdot)$ with $\gamma(-\cdot)$) $n(\gamma(t)):=\gamma'(t)^\perp$ for a.e. $t\in\R$.
\end{remark}

\section{The covering argument: Locally uniform smallness}
In what follows we show locally uniform smallness for sequences of functions with uniformly bounded global integral energy. This is a standard covering argument, see for instance~\cite{SU81} Proposition 4.3 and Theorem 4.4. 
For the sake of completeness we provide here a proof of the result.

\begin{proposition}\label{pr:uniformsmallness}
For any $\eps > 0$ and $\Lambda > 0$ there exists $L = L(\eps,\Lambda)$ such that the following holds.

For any sequence $F_k : \S^1 \times \S^1$
\[
 \sup_{k} \int_{\S^1} \brac{\int_{\S^1} |F_k(x,y)|dy}^p dx \leq \Lambda,
\]
there exists a subsequence (still denoted by $F_{k}$) and set $\Sigma \subset \S^1$ consisting of at most $L$ points such that for any $x_0 \in \S^1 \setminus \Sigma$ there exists a radius $\rho =\rho_{x_0} > 0$ and an index $K \in \N$ such that
\[
 \sup_{k \geq K} \int_{\S^1} \brac{\int_{B(x_0,\rho)} |F_{k}(x,y)|dy}^p dx < \eps.
\]
and
\[
 \sup_{k \geq K} \int_{B(x_0,\rho)} \brac{\int_{\S^1} |F_{k}(x,y)|dy}^p dx < \eps.
\]

\end{proposition}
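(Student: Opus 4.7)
The plan is to run a concentration-compactness / covering argument simultaneously on two set functions on $\S^1$. Let
\[
g_k(x) := \int_{\S^1} |F_k(x,y)|\, dy,
\]
so that $\mu_k := g_k^p \, dx$ is a Radon measure on $\S^1$ of total mass at most $\Lambda$, and set
\[
\alpha_k(B) := \int_{\S^1} \brac{\int_B |F_k(x,y)|\, dy}^p\, dx.
\]
The measures $\mu_k$ control the second inequality in the conclusion, while $\alpha_k(B(x_0,\rho))$ is exactly the quantity appearing in the first. The decisive observation is that although $\alpha_k$ is not a measure, it is \emph{superadditive on disjoint sets}: for $a,b \geq 0$ and $p \geq 1$ the inequality $(a+b)^p \geq a^p + b^p$ applied pointwise in $x$ yields $\alpha_k(B_1 \cup B_2) \geq \alpha_k(B_1) + \alpha_k(B_2)$ whenever $B_1,B_2$ are disjoint. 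Together with monotonicity and $\alpha_k(\S^1) = \mu_k(\S^1) \leq \Lambda$, this is enough to carry out an atom-counting argument.

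Apply Banach--Alaoglu in $C(\S^1)^*$ to extract a subsequence with $\mu_k \rightharpoonup \mu$ weakly-$*$, where $\mu$ is a finite nonnegative Borel measure. A Cantor diagonal argument on the countable family
\[
\mathcal{R} := \{B(q,1/j) : q \in \Q \cap \S^1,\ j \in \N\}
\]
extracts a further subsequence along which $\alpha_k(B) \to A(B)$ for every $B \in \mathcal{R}$. Define
\[
\Sigma_1 := \{x_0 \in \S^1 : \mu(\{x_0\}) \geq \eps/2\},
\]
\[
\Sigma_2 := \{x_0 \in \S^1 : A(B) \geq \eps/2 \text{ for every } B \in \mathcal{R} \text{ containing } x_0\},
\]
and $\Sigma := \Sigma_1 \cup \Sigma_2$. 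Standard atom counting gives $|\Sigma_1| \leq 2\Lambda/\eps$. For $\Sigma_2$: if $x_0^{(1)},\dots,x_0^{(N)}$ are distinct elements, choose pairwise disjoint balls $B_i \in \mathcal{R}$ with $x_0^{(i)} \in B_i$; then superadditivity of $\alpha_k$ implies $\sum_i \alpha_k(B_i) \leq \alpha_k(\S^1) \leq \Lambda$ for every $k$, and passing to the limit in this \emph{finite} sum gives $\sum_i A(B_i) \leq \Lambda$, whence $N \cdot \eps/2 \leq \Lambda$ and $|\Sigma_2| \leq 2\Lambda/\eps$. Therefore $|\Sigma| \leq L := 4\Lambda/\eps$, depending only on $\Lambda$ and $\eps$.

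Finally fix $x_0 \in \S^1 \setminus \Sigma$. Since $x_0 \notin \Sigma_2$ there exists $B(q,1/j) \in \mathcal{R}$ containing $x_0$ with $A(B(q,1/j)) < \eps/2$. Since $x_0 \notin \Sigma_1$, outer regularity of $\mu$ furnishes $\rho_0 > 0$ with $\mu(\overline{B(x_0,\rho_0)}) < \eps$. Choose $\rho_{x_0} > 0$ small enough that $B(x_0,\rho_{x_0}) \subset B(q,1/j)$ and $\rho_{x_0} \leq \rho_0$. Monotonicity of $\alpha_k$ together with $\alpha_k(B(q,1/j)) \to A(B(q,1/j)) < \eps/2$ yields $\alpha_k(B(x_0,\rho_{x_0})) < \eps$ for $k$ large, securing the first inequality, while the Portmanteau-type bound $\limsup_k \mu_k(\overline{B(x_0,\rho_{x_0})}) \leq \mu(\overline{B(x_0,\rho_0)}) < \eps$ secures the second. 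Taking $K$ large enough that both estimates hold completes the proof. The main obstacle is precisely the non-measure nature of $\alpha_k$: the remedy is to recognize that superadditivity (a direct consequence of $p \geq 1$) is already sufficient for the atom-counting step on finite collections of disjoint balls, and that the missing Banach--Alaoglu theorem for set functions can be replaced by a pointwise diagonal extraction on the countable family $\mathcal{R}$.
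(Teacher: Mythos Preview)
Your argument is correct. Both your proof and the paper's proof rest on the same key observation: the set function $\alpha_k(B)=\int_{\S^1}\bigl(\int_B|F_k|\,dy\bigr)^p dx$ is superadditive on disjoint sets because $(a+b)^p\ge a^p+b^p$ for $p\ge1$, and together with the global bound $\alpha_k(\S^1)\le\Lambda$ this limits the number of intervals where $\alpha_k$ can concentrate above the threshold $\eps$.

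The organizational difference is that the paper runs a direct dyadic covering argument: at each scale $m$ it covers $\S^1$ by balls of radius $\delta 2^{-m}$ with bounded overlap, observes that at most $L=\lfloor 2^p\Lambda/\eps\rfloor$ of them can be ``bad'' for any given $k$, then passes to a subsequence so that the set of bad balls stabilizes in $k$, and finally intersects over scales $m$ to produce $\Sigma$. Your route is a concentration--compactness repackaging: you first pass to a subsequence along which the genuine measures $\mu_k=g_k^p\,dx$ converge weak-$*$ and the values $\alpha_k(B)$ converge for each $B$ in a countable basis $\mathcal{R}$, then read off the bad set $\Sigma$ from the \emph{limiting} objects $\mu$ and $A$. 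The atom bound for $\Sigma_1$ is immediate from $\mu(\S^1)\le\Lambda$, and your atom bound for $\Sigma_2$ reproduces the paper's counting step but phrased for the limit $A$ rather than for each $\alpha_k$. The paper's approach is slightly more elementary (no Banach--Alaoglu, no Portmanteau), while yours is closer to standard concentration--compactness language and handles the two inequalities in the conclusion symmetrically by treating $\mu_k$ and $\alpha_k$ in parallel, whereas the paper only spells out the first and asserts the second follows by adaptation. One cosmetic point: ``$\Q\cap\S^1$'' should be interpreted as any fixed countable dense subset of $\S^1$.
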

\begin{proof}
We give the details for the convenience of the reader, but only consider the first inequality, an easy adaptation implies that first and second inequality hold simultaneously.

Pick $\delta << \frac{\eps}{2^p\Lambda}$ and let $m\in\N$. Then cover $\S^1$ by finitely many intervals $B(x_i,\delta 2^{-m})$ such that every point $x\in \S^1$ is covered at most two times. Then we have
	\begin{align*}
		& \int_{\R} \brac{\sum_i \int_{B(x_i,\delta 2^{-m})} |F_k(x,y)|dy}^p dx \\
		& \leq 2^p\Lambda = \tfrac{2^p\Lambda}{\eps} \eps.
	\end{align*}
Since
\[
 \sum_{i=1}^\infty (a_i)^p \leq \brac{\sum_{i=1}^\infty a_i}^p
\]
we have
	\begin{align*}
		& \sum_i \int_{\R} \brac{\int_{B(x_i,\delta 2^{-m})} |F_k(x,y)|dy}^p dx \\
		& \leq \int_{\R} \brac{\sum_i \int_{B(x_i,\delta 2^{-m})} |F_k(x,y)|dy}^p dx \\
		& \leq 2^p\Lambda
	\end{align*}

Hence for every $k$ and any $m$ there exist at most $L:=L(\eps,\Lambda):= \lfloor \tfrac{2^p\Lambda}{\eps} \rfloor$ intervals $B(x_i,\delta 2^{-m})$ such that
	\[
		\int_{\R} \brac{\int_{B(x_i,\delta 2^{-m})} |F_k(x,y)|dy}^p dx  \geq \eps.
	\]

Next we observe that for any fixed $m$ we can pass to a subsequence of $(F_k)_k$ (not relabeled) so that
\[
\#\{i:	\sup_k \int_{\R} \brac{\int_{B(x_i,\delta 2^{-m})} |F_k(x,y)|dy}^p dx \geq \eps \} \leq L.
\]
This is true since for any $k$ there are only $L$ many intervals for which
\[
\int_{\R} \brac{\int_{B(x_i,\delta 2^{-m})} |F_k(x,y)|dy}^p dx \geq \eps
\]
We can then pass to a further subsequence that for any $m$ there exists a $K=K(m)$ such that
\[
\#\{i:	\sup_{k \geq K(m)} \int_{\R} \brac{\int_{B(x_i,\delta 2^{-m})} |F_k(x,y)|dy}^p dx \geq \eps \} \leq L.
\]
Set
\[
 \Sigma_m := \S^1 \setminus \{x \in B(x_i,\delta 2^{-m}): \text{for some $i$ with} \int_{\R} \brac{\int_{B(x_i,\delta 2^{-m})} |F_k(x,y)|dy}^p dx \leq \eps\}
\]
We then have
\[
 \Sigma_m \subset \bigcup_{\ell = 1}^L B(x_{i_\ell},\delta 2^{-m})
\]
Nesting the balls $B(x_i,\delta 2^{-m})$ as $m \to \infty$ we can ensure that $\Sigma_{m+1} \subset \Sigma_m$.

Set
\[
 \Sigma := \bigcap_{m} \Sigma_m.
\]
Then by the definition of the $\mathcal{H}^0$-measure, since $\Sigma$ is covered by $\Sigma_m$ which consists of at most $L$ intervals of sidelength $2^{-m} \delta$ (and this for each $m$) we have
\[
 \mathcal{H}^0(\Sigma) \leq L.
\]
Also clearly, if $x \not \in \Sigma$ there must be some $m \in \N$ such that $x \not \in \Sigma_m$, but $x \in B(x_{i;m},\delta 2^{-m})$ for some $x_{i;m}$ such that
\[
\int_{\R} \brac{\int_{B(x_i,\delta 2^{-m})} |F_k(x,y)|dy}^p dx \leq \eps
.
\] But then for $\rho := \dist(x,\partial B(x_{i;m},\delta 2^{-m}))$ we have
\[
\int_{\R} \brac{\int_{B(x,\rho)} |F_k(x,y)|dy}^p dx \leq \eps.
\]
\end{proof}

\bibliographystyle{abbrv}
\bibliography{bib}

\end{document}